\numberwithin{equation}{section}
\newtheorem{thm}{Theorem}[section]
\newtheorem{prop}[thm]{Proposition}
\newenvironment{customprop}[1]
  {\innercustomprop}
  {\endinnercustomprop}
\newtheorem{cor}[thm]{Corollary}
\newtheorem*{cor*}{Corollary}
\newtheorem{lema}[thm]{Lemma}
\newtheorem*{lema*}{Lemma}
\newtheorem{hyp2}[thm]{Hypothesis}
\theoremstyle{definition}
\newtheorem{prob}{Problem}
\newtheorem*{prob*}{Problem}
\newenvironment{customprob}[1]
  {\innercustomthm}
  {\endinnercustomthm}
\newtheorem{obs}[thm]{Remark}
\newtheorem*{obs*}{Remark}
\newtheorem*{thm*}{Theorem}
\newtheorem*{prop*}{Proposition}
\newcommand{\PI}[2]{\left\langle \,#1 , #2\, \right\rangle}
\newcommand{\set}[1]{\left\{ \,#1\, \right\}}
\newcommand{\parentesis}[1]{\left( \,#1\, \right)}
\newcommand{\parentesisb}[1]{\big( \,#1\, \big)}
\newcommand{\w}[1]{\widetilde{#1}}
\newcommand{\x}{\times}
\newcommand{\RR}{\mathbb{R}}
\newcommand{\St}{\mathcal{S}}
\newcommand{\N}{\mathcal{N}}
\newcommand{\HH}{\mathcal{H}}
\newcommand{\ZZ}{\mathcal{Z}}
\newcommand{\mc}[1]{\mathcal{#1}}
\newcommand{\noi}{\noindent}
\newcommand{\dx}{\Delta x}
\newcommand{\la}{\lambda}
\newcommand{\eqnum}{\refstepcounter{equation}\textup{\tagform@{\theequation}}}
\DeclareMathOperator{\real}{Re}
\begin{document}

\title{Quadratic programming with one quadratic constraint in Hilbert spaces}

\author{Santiago Gonzalez Zerbo \and Alejandra Maestripieri \and Francisco Mart\'{\i}nez Per\'{\i}a}

\date{\footnote{MSC 2020: 47N10, 90C20, 47B15, 47B65}}

\maketitle

\begin{abstract}
A quadratically constrained quadratic programming problem is considered in a Hilbert space setting, where neither the objective nor the constraint are convex functions. Necessary and sufficient conditions are provided to guarantee that the problem admits solutions for every initial data (in an adequate set).
\end{abstract}

\section{Introduction}

As the simplest form of non-linear programming, {\it quadratic programming} (QP) plays a significant role in optimization.
Portfolio optimization, (determination of) economic equilibria, control theory and machine learning
\cite{McCarl,Gupta,Loosli}, are all areas that are naturally approached via QP.
In particular, {\it quadratically constrained quadratic programming} (QCQP), which we consider here, appears in the context of
the steering direction estimation for radar detection, the maximum cut problem and boolean optimization \cite{Golub,DeMaio,Boyd}, among others.
QCQP problems have been extensively studied, particularly in the finite dimensional setting
\cite{Powell,Ye,Polik,Park}.
In the infinite dimensional setting, these problems were first analyzed in \cite{Kelly,Benson,Shimizu,Pawel,Semple} and more recently
algorithms have been developed for particular cases \cite{Ahmetoglu,Chen}.

In Hilbert spaces, QCQP can be posed as
\begin{alignat*}{3}
& \text{minimize}   \quad && f(x)&&=\PI{T_0x}{x}+2\real\PI{y_0}{x} + \alpha_0\\
& \text{subject to}   \quad && g_i(x)&&=\PI{T_i x}{x}+2\real\PI{y_i}{x}\leq\alpha_i,\qquad i=1,2,...,m,
\end{alignat*}
where the optimization variable $x$ is in a Hilbert space $(\HH,\PI{\cdot}{\cdot})$, and the data consists of bounded selfadjoint operators
$T_i$ acting in $\HH$, vectors $y_i\in\HH$ and scalars $\alpha_i\in\RR$, for $i=0,1,...,m$. Such problems have been recently analyzed in \cite{Bednarczuk}
for real Hilbert spaces assuming the existence of solutions, and were studied in \cite{Dong1,Dong2}
assuming the operators $T_i$ were positive semidefinite, leading to convex constraints.
These cases can be analyzed with classical convex optimization techniques \cite{Rockafellar,Sundaram,Boltyanski,Boyd}.

The case where only one constraint is considered (QP1QC) is related to practical issues derived from system identification and machine learning theory.
In the finite dimensional setting, QP1QC occur in the time of arrival geolocation problem \cite{Hmam}, and in time series model identification
under noisy measurements \cite{Pelt_1,Pelt_2,Pelt_3}. In classical machine learning theory,
usually formulated in reproducing kernel Hilbert spaces (RKHS) \cite{Signoretto}, the objective functions in the QCQP are convex due to the fact that the
kernel is positive definite \cite{Gartner,Xu,Mohri}. However, verifying that the kernel is positive definite (known as the Mercer condition) is
computationally very hard. In \cite{Canu2004,Canusplines,Oglic,Oglic_2} it has been proposed to
use reproducing kernel Krein spaces (RKKS) where the kernel is indefinite, thus avoiding the
need to verify the Mercer condition. Indefinite kernel techniques
have also proved useful in pattern recognition theory \cite{Haasdonk2005,Sonnenberg2006}.

\medskip

This paper is devoted to studying the following QP1QC:

\begin{prob}\label{pb 1}
Given $A,B$ bounded selfadjoint operators acting in a Hilbert space $(\HH,\PI{\cdot}{\cdot})$, vectors $a,b\in\HH$, and a constant $\beta\in\RR$,
analyze the existence of
\[
\min\,\PI{Ax}{x}+2\real\PI{a}{x}\quad\text{subject to}\quad\PI{Bx}{x}+2\real\PI{b}{x}\leq\beta,
\]
\end{prob}
\noi and the associated {\it quadratic programming problem with one equality quadratic constraint} (QP1EQC):

\begin{prob}\label{pb 2}
Analyze the existence of
\[
\min\,\PI{Ax}{x}+2\real\PI{a}{x}\quad\text{subject to}\quad\PI{Bx}{x}+2\real\PI{b}{x}=\beta.
\]
\end{prob}

\medskip

We assume that $A$ and $B$ are indefinite (neither positive nor negative semidefinite), so the objective function and the constraint in Problem \ref{pb 2}
are not convex. This gives rise to significant difficulties since the problem is not amenable to classical convex optimization techniques,
though in the finite dimensional setting its analogous version has been shown to be polynomially solvable \cite{Boyd,Polik,HLS14}.

\medskip

The main result of this paper provides necessary and sufficient conditions for Problem \ref{pb 1} to admit a solution for every initial data point
in $\big(R(A)+R(B)\big)\x R(B)$.
A necessary condition for the existence of solutions to Problem \ref{pb 1} is that there exists $\la_0\in\RR$ such that $A+\la_0 B$ is positive
semidefinite. In this case, there exists a closed interval $[\la_-,\la_+]$ such that the operator pencil $A+\la B$ is positive semidefinite for
$\la$ in $[\la_-,\la_+]$. We first show that also $\la_->0$ is a necessary condition for Problem \ref{pb 1} to admit a
solution, and (under this hypothesis) Problem \ref{pb 1} admits a solution if and only if
Problem \ref{pb 2} does. Then the sets of solutions to both problems are the same, and we can focus on Problem \ref{pb 2}.

\medskip

In what follows, Section \ref{section_pencils} introduces the notation and presents some results on linear operator pencils
which are used in the rest of the paper. Given bounded selfadjoint operators $A$ and $B$,
a well known result by Krein and Smul'jan \cite{KS} states that if $B$ is indefinite then there exists $\la_0\in\RR$ such that
$A+\la_0 B$ is positive semidefinite if and only if $\PI{Ax}{x}\geq0$ whenever $\PI{Bx}{x}=0$.
Moreover, $A+\la B$ is positive semidefinite for every $\la\in[\la_-,\la_+]$, where
\[
\la_-=-\inf_{\{x\in\HH:\PI{Bx}{x}>0\}}\frac{\PI{Ax}{x}}{\PI{Bx}{x}}\quad\quad\text{and}\quad\quad\la_+=-\sup_{\{x\in\HH:\PI{Bx}{x}<0\}}\frac{\PI{Ax}{x}}{\PI{Bx}{x}}.
\]
Section \ref{section_problem} presents necessary and sufficient conditions for the existence of solutions to Problem \ref{pb 2} for a fixed
constant $\beta\in\RR$ and initial data $(a,b)\in\HH\x\HH$.

Section \ref{section_conditions} is devoted to finding a set of necessary and sufficient conditions for the existence of solutions to Problem
\ref{pb 2} for every initial data point in $\big(R(A)+R(B)\big)\x R(B)$.
We reduce the initial problem to an equivalent simpler problem.
First, it is necessary that $\la_-<\la_+$ with values obtained at
\[
\la_-=-\min_{\{x\in\HH:\PI{Bx}{x}>0\}}\frac{\PI{Ax}{x}}{\PI{Bx}{x}}\quad\quad\text{and}\quad\quad\la_+=-\max_{\{x\in\HH:\PI{Bx}{x}<0\}}\frac{\PI{Ax}{x}}{\PI{Bx}{x}}.
\]
Then it is necessary to have $R(A+\la B)=R(A)+R(B)$ for every $\la\in(\la_-,\la_+)$, and that an analogous condition is satisfied
for the case in which $\la=\la_-$ or $\la=\la_+$.
The necessary conditions are also shown to be sufficient. Necessary and sufficient conditions for
Problem \ref{pb 2} to admit a solution for every $(a,b)\in\HH\x R(B)$ are also established.

\section{Some results on linear operator pencils with a range of positiveness}\label{section_pencils}

In this work $\HH$ denotes a complex separable Hilbert space, and $\mc{L}(\HH)$ stands for the algebra of bounded linear operators in $\HH$.




An operator $A\in \mc{L}(\HH)$ is {\it positive semidefinite} if $\PI{Ax}{x}\geq 0$ for all $x\in\HH$; and it is {\it positive definite} if there exists
$\alpha>0$ such that $\PI{Ax}{x}\geq \alpha\|x\|^2$ for every $x\in\HH$. The cone of positive semidefinite operators is denoted by $\mc{L}(\HH)^+$.
If $A,B\in \mc{L}(\HH)$ are selfadjoint, $A\geq B$ stands for $A-B\in\mc{L}(\HH)^+$.
Every selfadjoint operator $A\in\mc{L}(\HH)$ admits a canonical decomposition as the difference of two positive operators:
there exist unique closed subspaces $\HH_+$ and $\HH_-$ of $\HH$, and injective operators $A_+\in\mc{L}(\HH_+)^+$ and $A_-\in\mc{L}(\HH_-)^+$ such that 
\begin{equation}\label{eq:descomposicion}
\HH=\HH_+\oplus\HH_-\oplus N(A),
\end{equation}
and $A=\left(\begin{smallmatrix}
A_+ & 0 & 0 \\
0 & -A_- & 0 \\
0 & 0 & 0
\end{smallmatrix}\right)$ with respect to \eqref{eq:descomposicion}.

We say that a selfadjoint operator $A\in\mc{L}(\HH)$ is {\it indefinite} if it is neither positive nor negative semidefinite;
i.e., if $A_+$ and $A_-$ are both non-zero.
If $A\in\mc{L}(\HH)$ is selfadjoint, define the sets
\[
Q(A):=\set{x\in\HH\,:\,\PI{Ax}{x}=0},
\]
\[
\mc{P}^-(A):=\set{x\in\HH\,:\,\PI{Ax}{x}<0}\quad\text{and}\quad\mc{P}^+(A):=\set{x\in\HH\,:\,\PI{Ax}{x}>0}.
\]

\smallskip

In this section we consider linear pencils of the form $P(\la)=A + \la B$, where $A,B\in\mc{L}(\HH)$ are selfadjoint,
and the parameter $\la\in\RR$. Let
\begin{align*}
I_\geq (A,B) &:=\{\,\la\in\RR:\ A + \la B\in\mc{L}(\HH)^+\,\}, \\
I_> (A,B) &:=\{\,\la\in\RR:\ A + \la B\,\,\text{is positive definite}\,\}.
\end{align*}

The following statement is a version of a well known result by Krein-Smul'jan \cite[Thm. 1.1]{KS}, see also \cite[Lemma 1.35]{Azizov}. It characterizes the case in which $I_{\geq}(A,B)$ is non empty,
and also describes this set as a closed interval.

\begin{prop}\label{Azizov pencils}
If $B$ is indefinite, then $I_\geq (A,B)\neq\varnothing$ if and only if 
\[
\PI{Ax}{x}\geq 0 \qquad \text{whenever}\qquad \PI{Bx}{x}=0. 
\]
In this case, if
\begin{equation}\label{eq:def_lambdas}
\la_-:=-\inf_{x\in\mc{P}^+(B)}\frac{\PI{Ax}{x}}{\PI{Bx}{x}}\quad\quad\text{and}\quad\quad \la_+:=-\sup_{x\in\mc{P}^-(B)}\frac{\PI{Ax}{x}}{\PI{Bx}{x}},
\end{equation}
then $\lambda_-\leq\lambda_+$ and $I_\geq (A,B)=[\lambda_-,\lambda_+]$.
\end{prop}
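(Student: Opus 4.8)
The plan is to reduce the operator inequality $A+\la B\geq 0$ to a family of scalar inequalities and then to a single separation problem in the plane. I would begin with the elementary observation that $\la\in I_\geq(A,B)$ means $\PI{Ax}{x}+\la\PI{Bx}{x}\geq 0$ for every $x\in\HH$. Splitting $x$ according to the sign of $\PI{Bx}{x}$ and dividing, this is equivalent to the simultaneous requirements $\la\geq-\PI{Ax}{x}/\PI{Bx}{x}$ for all $x\in\mc{P}^+(B)$, $\la\leq-\PI{Ax}{x}/\PI{Bx}{x}$ for all $x\in\mc{P}^-(B)$, and $\PI{Ax}{x}\geq 0$ for all $x\in Q(B)$. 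Since the last condition does not involve $\la$, it follows at once that if it fails then $I_\geq(A,B)=\varnothing$ (this is the easy implication of the stated equivalence), while if it holds then, taking suprema and infima of the quotients, $I_\geq(A,B)=\{\la\in\RR:\la_-\leq\la\leq\la_+\}$ with $\la_\pm$ exactly as in \eqref{eq:def_lambdas}. Thus the entire statement is reduced to showing that, under the hypothesis on $Q(B)$, one has $\la_-\leq\la_+$ and both are finite; equivalently, that $I_\geq(A,B)$ is nonempty.

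This is the substantive point, and I would handle it through the cone $C:=\{(\PI{Ax}{x},\PI{Bx}{x}):x\in\HH\}\subseteq\RR^2$. Writing $\PI{(A+iB)x}{x}=\PI{Ax}{x}+i\PI{Bx}{x}$, one sees that $C$ is the cone over the numerical range of $A+iB$; since $\HH$ is complex, the Toeplitz--Hausdorff theorem ensures that this numerical range is convex, so $C$ is a convex cone containing the origin. In these terms the hypothesis $\PI{Ax}{x}\geq 0$ on $Q(B)$ says precisely that $C$ is disjoint from the open ray $R:=\{(s,0):s<0\}$, and the indefiniteness of $B$ says that $C$ meets both open half-planes $\{t>0\}$ and $\{t<0\}$.

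I would then separate $C$ from $R$. As these are disjoint nonempty convex subsets of $\RR^2$, there is a separating line; because the origin lies in $C$ and in $\overline{R}$, this line must pass through the origin, say $C\subseteq\{(s,t):n_1 s+n_2 t\geq 0\}$ and $R\subseteq\{(s,t):n_1 s+n_2 t\leq 0\}$. Evaluating the normal on $R$ forces $n_1\geq 0$, while $n_1\neq 0$ since otherwise $C$ would lie in one of the half-planes $\{t\geq 0\}$ or $\{t\leq 0\}$, contradicting indefiniteness. Hence $n_1>0$, and $\la_0:=n_2/n_1$ satisfies $\PI{Ax}{x}+\la_0\PI{Bx}{x}\geq 0$ for all $x$, that is $A+\la_0 B\geq 0$. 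This proves $I_\geq(A,B)\neq\varnothing$, and together with the first paragraph it yields $\la_-\leq\la_+$, the finiteness of $\la_\pm$, and the identity $I_\geq(A,B)=[\la_-,\la_+]$.

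The main obstacle is the separation step, and really the convexity of $C$ underlying it: this is exactly where the complex structure of $\HH$ is needed, via Toeplitz--Hausdorff. Once convexity is in hand the planar geometry is elementary, and the scalar bookkeeping of the first paragraph is routine.
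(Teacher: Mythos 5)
Your proof is correct; note, though, that the paper contains no proof to compare it with: Proposition \ref{Azizov pencils} is quoted as a known result of Krein and Smul'jan, with pointers to \cite[Thm. 1.1]{KS} and \cite[Lemma 1.35]{Azizov}. What you have written is, in substance, the classical argument underlying those references, carried out in full. The scalar bookkeeping in your first paragraph is right and does two jobs: it gives the ``only if'' direction (a vector $x$ with $\PI{Bx}{x}=0$ and $\PI{Ax}{x}<0$ satisfies $\PI{(A+\la B)x}{x}<0$ for every $\la$), and it shows that once $I_\geq(A,B)\neq\varnothing$ is established one automatically gets $\la_-\leq\la_+$, finiteness of both (a supremum over the nonempty set $\mc{P}^+(B)$ cannot be $-\infty$, an infimum over the nonempty set $\mc{P}^-(B)$ cannot be $+\infty$, and both are squeezed by any $\la_0\in I_\geq(A,B)$), and the identity $I_\geq(A,B)=[\la_-,\la_+]$. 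The substantive step is also handled correctly: $C=\set{(\PI{Ax}{x},\PI{Bx}{x})\,:\,x\in\HH}$ is the cone generated by the numerical range of $A+iB$, hence convex by Toeplitz--Hausdorff (that the cone generated by a convex set is convex deserves one explicit line in a final writeup), and this is exactly where the complex structure of $\HH$ enters; in a real Hilbert space one would instead invoke Dines' theorem on the convexity of the joint range of two quadratic forms. The separation step is legitimate as stated: two disjoint nonempty convex subsets of $\RR^2$ can always be separated by a line, with no closedness or openness hypotheses; the line passes through the origin because $0$ lies in $C$ and in the closure of the ray $R$; testing on $R$ gives $n_1\geq 0$; and $n_1\neq 0$ because $B$ is indefinite, so $A+(n_2/n_1)B\in\mc{L}(\HH)^+$. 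Altogether the proposal is a complete and correct substitute for the citation.
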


When both $A$ and $B$ are indefinite with $I_\geq(A,B)\neq\varnothing$, then either $\la_->0$ or $\la_+<0$. Moreover,

\begin{prop}\label{prop_nueva_pencils} If $A$ and $B$ are indefinite, the following conditions are equivalent:
\begin{enumerate}[label=\roman*)]
\item $\PI{Ax}{x}\geq0$ whenever $\PI{Bx}{x}\leq0$;
\item $I_\geq(A,B)\neq\varnothing$ and $\la_->0$;
\item $I_\geq(A,B)\neq\varnothing$ and $\PI{Ax}{x}>0$ whenever $\PI{Bx}{x}<0$.
\end{enumerate}
\end{prop}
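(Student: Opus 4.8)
The plan is to prove the three statements equivalent by the cycle (i) $\Rightarrow$ (ii) $\Rightarrow$ (iii) $\Rightarrow$ (i). Throughout I would keep two ambient facts in view: $B$ is indefinite, so Proposition \ref{Azizov pencils} is available; and condition (i), specialized to the vectors with $\PI{Bx}{x}=0$, already contains the Krein--Smul'jan hypothesis $\PI{Ax}{x}\ge 0$ whenever $\PI{Bx}{x}=0$. Hence (i) alone forces $I_\geq(A,B)\neq\varnothing$, while both (ii) and (iii) assume this nonemptiness outright. Consequently, in every implication the interval description $I_\geq(A,B)=[\la_-,\la_+]$ of Proposition \ref{Azizov pencils} is at my disposal, as is the elementary inequality $\PI{Ax}{x}\ge-\la\PI{Bx}{x}$ valid for each $\la\in I_\geq(A,B)$.

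For (i) $\Rightarrow$ (ii): having $I_\geq(A,B)\neq\varnothing$ from the remark above, it remains to show $\la_->0$, i.e. that the infimum defining $\la_-$ in \eqref{eq:def_lambdas} is strictly negative. Here is the one genuinely non-obvious move: I would use that $A$ \emph{itself} is indefinite to pick $x_1$ with $\PI{Ax_1}{x_1}<0$. The contrapositive of (i) reads ``$\PI{Ax}{x}<0 \Rightarrow \PI{Bx}{x}>0$'', so $x_1\in\mc{P}^+(B)$ and the quotient $\PI{Ax_1}{x_1}/\PI{Bx_1}{x_1}$ is strictly negative. Therefore the infimum over $\mc{P}^+(B)$ is negative and $\la_-=-\inf>0$, which is (ii).

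For (ii) $\Rightarrow$ (iii): since $\la_->0$, I can fix $\la=\la_-\in I_\geq(A,B)$ with $\la>0$, so that $\PI{Ax}{x}+\la\PI{Bx}{x}\ge0$ for all $x$. If $\PI{Bx}{x}<0$, then $-\la\PI{Bx}{x}>0$, whence $\PI{Ax}{x}\ge-\la\PI{Bx}{x}>0$; together with $I_\geq(A,B)\neq\varnothing$ this is exactly (iii). For (iii) $\Rightarrow$ (i): given $\PI{Bx}{x}\le0$, I split into the case $\PI{Bx}{x}<0$, handled directly by (iii) (which even yields $\PI{Ax}{x}>0$), and the boundary case $\PI{Bx}{x}=0$, where the nonemptiness of $I_\geq(A,B)$ lets me invoke Proposition \ref{Azizov pencils} to conclude $\PI{Ax}{x}\ge0$.

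I do not anticipate a serious obstacle: the whole argument rests on the inequality $\PI{Ax}{x}\ge-\la\PI{Bx}{x}$ for $\la\in I_\geq(A,B)$ and on careful bookkeeping of strict versus non-strict inequalities. The only step requiring a real idea is (i) $\Rightarrow \la_->0$, where it is essential to exploit the indefiniteness of $A$ (not merely of $B$) in order to produce a vector lying in $\mc{P}^+(B)$ on which $A$ is negative; without that input one could not exclude the competing alternative $\la_+<0$ that the remark preceding the statement leaves open.
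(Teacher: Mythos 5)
Your proof is correct and follows essentially the same route as the paper: the same cycle (i)$\Rightarrow$(ii)$\Rightarrow$(iii)$\Rightarrow$(i), with (ii)$\Rightarrow$(iii) and (iii)$\Rightarrow$(i) argued exactly as in the text, via $\PI{(A+\la_- B)x}{x}\geq0$ and Proposition \ref{Azizov pencils}. The only variation is in (i)$\Rightarrow$(ii), where you obtain $\la_->0$ directly from the defining infimum in \eqref{eq:def_lambdas} by using the indefiniteness of $A$ and the contrapositive of (i) to produce $x_1\in\mc{P}^+(B)$ with $\PI{Ax_1}{x_1}<0$, whereas the paper instead observes $\la_+\geq0$ and uses the indefiniteness of $A$ to exclude $0$ from $[\la_-,\la_+]$; both arguments are equally short and rest on the same ingredients.
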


\begin{proof}
\noi{\it i)$\to$ii)} If {\it i)} holds, in particular $\PI{Ax}{x}\geq0$ whenever $\PI{Bx}{x}=0$ and, by Proposition \ref{Azizov pencils}, $I_\geq(A,B)\neq\varnothing$.
Also,
\[
\la_+=-\sup_{x\in\mc{P}^{-}(B)}\frac{\PI{Ax}{x}}{\PI{Bx}{x}}\geq0,
\]
and since $A$ is indefinite, $0\notin[\la_-,\la_+]$. Thus, $\la_->0$.

\smallskip

\noi{\it ii)$\to$iii)} If $I_\geq(A,B)\neq\varnothing$ and $\la_->0$, consider $x\in\HH$ such that $\PI{Bx}{x}<0$. From $\PI{(A+\la_- B)x}{x}\geq0$
we get that $\PI{Ax}{x}\geq-\la_-\PI{Bx}{x}>0$.

\smallskip

\noi{\it iii)$\to$i)} Since $I_\geq(A,B)\neq\varnothing$, $\PI{Ax}{x}\geq0$ whenever $\PI{Bx}{x}=0$. Then {\it i)} follows.
\end{proof}

Replacing $B$ by $-B$ we also get

\begin{cor}\label{} If $A$ and $B$ are indefinite, the following conditions are equivalent:
\begin{enumerate}[label=\roman*)]
\item $\PI{Ax}{x}\geq0$ whenever $\PI{Bx}{x}\geq0$;
\item $I_\geq(A,B)\neq\varnothing$ and $\la_+<0$;
\item $I_\geq(A,B)\neq\varnothing$ and $\PI{Ax}{x}>0$ whenever $\PI{Bx}{x}>0$.
\end{enumerate}
\end{cor}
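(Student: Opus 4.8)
The plan is to deduce this Corollary directly from Proposition \ref{prop_nueva_pencils} by applying that result to the pair $(A,-B)$, exactly as the preceding sentence suggests. Since $-B$ is indefinite precisely when $B$ is, the hypotheses of Proposition \ref{prop_nueva_pencils} are available for $(A,-B)$, and each of the three conditions it supplies can then be rewritten in terms of $B$.

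First I would set up the dictionary relating the data for $(A,-B)$ to that for $(A,B)$. Because $\PI{-Bx}{x}=-\PI{Bx}{x}$, the cones swap, namely $\mc{P}^+(-B)=\mc{P}^-(B)$ and $\mc{P}^-(-B)=\mc{P}^+(B)$. Feeding this into the definition \eqref{eq:def_lambdas} and using that negating the quotient interchanges infimum and supremum, I would record that $\la_-(A,-B)=-\la_+(A,B)$ and $\la_+(A,-B)=-\la_-(A,B)$; consequently $I_\geq(A,-B)=[-\la_+,-\la_-]$, which by Proposition \ref{Azizov pencils} is nonempty exactly when $I_\geq(A,B)=[\la_-,\la_+]$ is.

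With this dictionary in hand, the three items of Proposition \ref{prop_nueva_pencils} applied to $(A,-B)$ translate term by term into the three items of the Corollary for $(A,B)$: the condition $\PI{Bx}{x}\geq0$ is the same as $\PI{-Bx}{x}\leq0$, so item i) here corresponds to item i) there; the inequality $\la_+<0$ is equivalent to $-\la_+=\la_-(A,-B)>0$, matching item ii); and $\PI{Bx}{x}>0$ is the same as $\PI{-Bx}{x}<0$, matching item iii). The equivalences for $(A,B)$ then follow immediately from those already proved for $(A,-B)$.

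The only point requiring care — and the step I would verify explicitly — is the sign bookkeeping in the second paragraph: passing from $B$ to $-B$ turns an infimum into a supremum in \eqref{eq:def_lambdas}, so one must confirm $\la_-(A,-B)=-\la_+(A,B)$ rather than $-\la_-(A,B)$, and correspondingly that the interval reverses orientation to $[-\la_+,-\la_-]$. Everything else is a direct substitution, so no genuine obstacle arises beyond keeping the orientation of $[\la_-,\la_+]$ straight.
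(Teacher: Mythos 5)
Your proposal is correct and is exactly the paper's argument: the paper derives the corollary from Proposition \ref{prop_nueva_pencils} with the single remark ``Replacing $B$ by $-B$ we also get,'' and your sign bookkeeping ($\la_-(A,-B)=-\la_+(A,B)$, $\la_+(A,-B)=-\la_-(A,B)$, so $I_\geq(A,-B)=[-\la_+,-\la_-]$) simply makes that substitution explicit and is verified correctly.
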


From now on we assume that $B$ is indefinite and that $I_\geq(A,B)\neq\varnothing$.
The following results can be found in \cite{GZMMP2}.
If $\la_-<\la_+$ the ranges of $(A+\la B)^{1/2}$ are constant for $\la\in(\la_-,\la_+)$, implying that $N(A+\la B)$ are also constant (and equal to
$N(A)\cap N(B)$), while $N(A+\la_- B)$ and $N(A+\la_+B)$ contain $N(A)\cap N(B)$.
If the supremum (respectively, the infimum) is attained in \eqref{eq:def_lambdas}, then $N(A+\la_-B)$ (respectively, $N(A+\la_+B)$) strictly contains
$N(A)\cap N(B)$.

\begin{prop}\label{nucleo del interior}
Assume that $\lambda_-<\lambda_+$. Then,
\begin{enumerate}[label=\roman*)]
\item $N(A + \la B)=Q(A)\cap Q(B)=N(A)\cap N(B)$, \quad for every $\la\in (\lambda_-,\lambda_+)$;
\item $N(A+\la_\pm B)=\displaystyle{\set{x\in\mc{P}^{\mp}(B)\,\,:\,\,\frac{\PI{Ax}{x}}{\PI{Bx}{x}}=
-\la_\pm}}\cup\big(N(A)\cap N(B)\big).$
\end{enumerate}
\end{prop}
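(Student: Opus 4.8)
The plan is to exploit the elementary fact that a positive semidefinite operator $T$ satisfies $N(T)=Q(T)$, i.e. $\PI{Tx}{x}=0$ forces $Tx=0$ (write $T=T^{1/2}T^{1/2}$ so that $\PI{Tx}{x}=\|T^{1/2}x\|^2$). Since $A+\la B\in\mc{L}(\HH)^+$ for every $\la\in[\la_-,\la_+]$ by Proposition \ref{Azizov pencils}, this identity is available at each such $\la$, and it is the engine that converts scalar conditions on quadratic forms into operator (kernel) identities.

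For part i) I would establish the cyclic chain
\[
N(A)\cap N(B)\subseteq N(A+\la B)\subseteq Q(A)\cap Q(B)\subseteq N(A)\cap N(B),
\]
which forces all three sets to coincide. The first inclusion is immediate. For the middle one, fix $x\in N(A+\la B)$ with $\la\in(\la_-,\la_+)$; then $Ax=-\la Bx$, and for any $\mu\in(\la_-,\la_+)$ one has $\PI{(A+\mu B)x}{x}=(\mu-\la)\PI{Bx}{x}\geq0$. Choosing $\mu$ on either side of $\la$ (possible since $\la$ is interior) pins down $\PI{Bx}{x}=0$, whence $\PI{Ax}{x}=0$ as well. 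For the last inclusion I would use two \emph{distinct} parameters $\la_1\neq\la_2$ in $[\la_-,\la_+]$: if $x\in Q(A)\cap Q(B)$ then $\PI{(A+\la_i B)x}{x}=0$, and positivity gives $(A+\la_i B)x=0$ for $i=1,2$; subtracting yields $Bx=0$ and then $Ax=0$.

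For part ii) I would treat $\la_+$ (the case $\la_-$ being symmetric, obtained by replacing $B$ with $-B$). Since $A+\la_+B\in\mc{L}(\HH)^+$, a vector $x$ lies in $N(A+\la_+B)$ iff $\PI{Ax}{x}+\la_+\PI{Bx}{x}=0$. Splitting according to the sign of $\PI{Bx}{x}$: if it is negative then $x\in\mc{P}^-(B)$ with $\PI{Ax}{x}/\PI{Bx}{x}=-\la_+$; if it is zero then $x\in Q(A)\cap Q(B)=N(A)\cap N(B)$ by part i). The positive case must be excluded, and here the key is to test against the \emph{other} endpoint: if $x\in\mc{P}^+(B)$ then $\PI{(A+\la_-B)x}{x}=(\la_--\la_+)\PI{Bx}{x}<0$, contradicting $A+\la_-B\geq0$. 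This proves one inclusion; the reverse is direct, since any $x$ in either piece of the right-hand set satisfies $\PI{(A+\la_+B)x}{x}=0$ and hence, by positivity, $(A+\la_+B)x=0$.

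The main obstacle is the step $Q(A)\cap Q(B)\subseteq N(A)\cap N(B)$, which upgrades the vanishing of both quadratic forms to the vanishing of both operators on $x$: this is exactly where the hypothesis $\la_-<\la_+$ is indispensable, since it supplies the two distinct positive semidefinite members of the pencil needed for the subtraction argument. The secondary subtlety is the exclusion of the wrong-sign case in part ii), which is handled symmetrically by evaluating at the opposite endpoint of $[\la_-,\la_+]$.
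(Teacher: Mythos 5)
Your proof is correct. Note, however, that this paper does not actually prove Proposition \ref{nucleo del interior}: it is quoted without proof from the companion paper \cite{GZMMP2} (``The following results can be found in \cite{GZMMP2}''), so there is no in-paper argument to compare against. Your self-contained argument is a valid replacement, and it rests on exactly the two devices one would expect: the elementary identity $N(T)=Q(T)$ for $T\in\mc{L}(\HH)^+$ (via $\PI{Tx}{x}=\|T^{1/2}x\|^2$), and the evaluation of the quadratic form of the pencil at a \emph{second} parameter value, which is precisely where the hypothesis $\la_-<\la_+$ enters. All the steps check out: the two-sided choice of $\mu$ around an interior $\la$ correctly forces $\PI{Bx}{x}=0$ in the middle inclusion of part i); the subtraction of $(A+\la_1B)x=0$ and $(A+\la_2B)x=0$ with $\la_1\neq\la_2$ correctly upgrades $Q(A)\cap Q(B)$ to $N(A)\cap N(B)$; and in part ii) the exclusion of $\mc{P}^+(B)$ for the endpoint $\la_+$ by testing against $\la_-$ (and symmetrically for $\la_-$, either by the substitution $B\mapsto -B$ or by testing against $\la_+$) is sound, as is the converse direction via positive semidefiniteness of $A+\la_\pm B$. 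One small remark: your middle inclusion in part i) only needs one value $\mu\neq\la$ on each side, and since $\la$ ranges over the open interval you could even take $\mu\in\{\la_-,\la_+\}$; but as written the argument is complete and uses nothing beyond Proposition \ref{Azizov pencils}.
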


\begin{prop}\label{rango raiz}
Assume thath $\lambda_-<\lambda_+$. Then,
\begin{enumerate}[label=\roman*)]
\item $R\big((A + \la B)^{1/2}\big)=R\big((A + \la' B)^{1/2}\big)$ \quad for every $\la,\la'\in (\la_-,\la_+)$; 
\item $R\big((A + \la_\pm B)^{1/2}\big)\subseteq R\big((A + \la B)^{1/2}\big)$ \quad for every $\la\in (\la_-,\la_+)$. 
\end{enumerate}
\end{prop}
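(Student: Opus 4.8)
The plan is to reduce both statements to the description $I_\geq(A,B)=[\la_-,\la_+]$ from Proposition \ref{Azizov pencils}, together with the operator-range form of Douglas' majorization lemma: for $S,T\in\mc{L}(\HH)^+$ one has $R(S^{1/2})\subseteq R(T^{1/2})$ if and only if $S\leq c\,T$ for some $c>0$. This follows by applying Douglas' theorem to $C=S^{1/2}$ and $D=T^{1/2}$, noting that $CC^*=S$ and $DD^*=T$ since the square roots are selfadjoint. In this way every inclusion between ranges of square roots is converted into a scalar operator inequality, and each such inequality will be produced by writing the relevant value of the pencil as a convex combination of two others that lie in $[\la_-,\la_+]$ and are therefore positive semidefinite.

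For part i), I would fix $\la,\la'\in(\la_-,\la_+)$ and assume without loss of generality that $\la<\la'$. To obtain $R((A+\la'B)^{1/2})\subseteq R((A+\la B)^{1/2})$, choose some $\la_1$ with $\la_-<\la_1<\la$ (possible since $\la>\la_-$), so that $A+\la_1 B\geq 0$, and write $\la=s\la_1+(1-s)\la'$ with $s\in(0,1)$. Then
\[
A+\la B=s(A+\la_1 B)+(1-s)(A+\la' B)\geq (1-s)(A+\la' B),
\]
so that $A+\la' B\leq (1-s)^{-1}(A+\la B)$, and Douglas' lemma yields the inclusion. Symmetrically, choosing $\la_2$ with $\la'<\la_2<\la_+$ and writing $\la'$ as a convex combination of $\la$ and $\la_2$ gives $A+\la B\leq c\,(A+\la' B)$ for some $c>0$, hence the reverse inclusion; the two together give the claimed equality.

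Part ii) uses the same device, now placing an endpoint in the combination. For $\la_+$, fix $\la\in(\la_-,\la_+)$, pick $\la_1\in(\la_-,\la)$, and write $\la=s\la_1+(1-s)\la_+$ with $s\in(0,1)$. Since $A+\la_1 B\geq 0$ and $A+\la_+ B\geq 0$ (the endpoint belongs to the closed interval $I_\geq(A,B)=[\la_-,\la_+]$), the same computation gives $A+\la B\geq (1-s)(A+\la_+ B)$ and therefore $R((A+\la_+ B)^{1/2})\subseteq R((A+\la B)^{1/2})$. The case of $\la_-$ is identical, writing $\la$ as a convex combination of $\la_-$ and some $\la_2\in(\la,\la_+)$.

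The one point requiring care, and the main obstacle, is the square-root form of Douglas' lemma: the inclusion of the ranges of $S^{1/2}$ and $T^{1/2}$ is governed by the order relation between $S$ and $T$ themselves rather than between their square roots. Once this equivalence is in hand, the remainder is the elementary bookkeeping of convex combinations, and the standing hypothesis $\la_-<\la_+$ is exactly what guarantees enough room inside $[\la_-,\la_+]$ to select the auxiliary points $\la_1$ and $\la_2$.
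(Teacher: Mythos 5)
Your proof is correct. One contextual point: the paper itself never proves Proposition \ref{rango raiz} --- it is quoted, together with Proposition \ref{nucleo del interior}, from the authors' earlier work \cite{GZMMP2} (``The following results can be found in \cite{GZMMP2}''), so there is no internal proof to compare against; your argument effectively supplies the proof that the paper outsources. The route you take is the natural one and, as far as the cited reference goes, essentially the standard one: reduce range inclusions of square roots to operator inequalities via Douglas' lemma in the form $R(S^{1/2})\subseteq R(T^{1/2})$ if and only if $S\le c\,T$ for some $c>0$ (correctly obtained by applying Douglas to $C=S^{1/2}$, $D=T^{1/2}$), and then manufacture each inequality from the affine dependence of the pencil on $\la$: writing $\la=s\la_1+(1-s)\la'$ gives $A+\la B=s(A+\la_1B)+(1-s)(A+\la'B)$, and discarding the nonnegative summand $s(A+\la_1B)$ yields $A+\la'B\le(1-s)^{-1}(A+\la B)$. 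All the details check out: the auxiliary points $\la_1,\la_2$ exist precisely because $\la,\la'$ lie in the open interval $(\la_-,\la_+)$, the weights $s$ lie in $(0,1)$, and part ii) needs only $A+\la_\pm B\ge0$, which holds because $I_\geq(A,B)$ is the \emph{closed} interval $[\la_-,\la_+]$ by Proposition \ref{Azizov pencils}. Note also that your argument proves the slightly stronger one-sided statement behind ii) (an inclusion for every $\mu\in[\la_-,\la_+]$ into the interior ranges), and that it nowhere needs the standing assumption $N(A)\cap N(B)=\set{0}$, so it is valid at the level of generality at which the proposition is stated.
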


\smallskip

In the following, using Proposition \ref{rango raiz} we show how the study of the behaviour of pencil $A+\la B$ can be reduced to studying the behaviour of
an auxiliary pencil of the form
$I+\eta G$ acting on a dense subspace of $\HH$, where $G$ is some selfadjoint operator.
This reduction technique is widely used in the operator pencils context,
considering that the auxiliary pencil is easier to analyze, see e.g. \cite{Gohberg}.

Since $A+\la B\equiv 0$ in $N(A)\cap N(B)$ (and it is an invariant subspace for the pencil), from now on we assume that $N(A)\cap N(B)=\set{0}$.
Whenever $\la_-<\la_+$, set $\rho:=\frac{\la_-+\la_+}{2}$ and write
\begin{equation}\label{eq:def_W}
W:=(A+\rho B)^{1/2}\quad\quad\text{and}\quad\quad \mc{R}:=R(W).
\end{equation}
By Propositions \ref{nucleo del interior} and \ref{rango raiz}, $W$ is injective and $\mc{R}$ is dense. Moreover,
\[
\mc{R}=R\big((A+\la B)^{1/2}\big)\quad\quad\text{for every $\la\in(\la_-,\la_+)$.}
\]
For $\la\in[\la_-,\la_+]$, let $D=D(\la)\in\mc{L}(\HH)$ be the (unique) solution to $(A+\la B)^{1/2}=WX$
given by Douglas' Lemma \cite{Douglas}. It follows that
\begin{equation}\label{eq:Douglas}
A+\la B=WDD^*W\quad\quad\text{for every $\la\in[\la_-,\la_+]$,}
\end{equation}
and if $\la\in(\la_-,\la_+)$, then $D=W^{-1}(A+\la B)^{1/2}$ is invertible.

\begin{lema}\label{lema_G_acotado} Assume that $\la_-<\la_+$. Then, there exists a selfadjoint operator
$G\in\mc{L}(\HH)$ such that
\[
B=WGW.
\]
\end{lema}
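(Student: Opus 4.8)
The plan is to exploit the affine dependence of the pencil $A+\la B$ on the parameter together with the fact that in \eqref{eq:Douglas} the outer factor $W$ does \emph{not} depend on $\la$. Since the factorization $A+\la B=WDD^*W$ holds throughout $[\la_-,\la_+]$ with a common outer factor, subtracting two such identities at different parameter values should cancel the $A$--part and leave a factorization of a multiple of $B$ through $W$, with a bounded selfadjoint operator in the middle.

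Concretely, I would first record the value at the midpoint: since $W=(A+\rho B)^{1/2}$ is selfadjoint, by \eqref{eq:def_W} we trivially have $A+\rho B=W^2=W\,I\,W$. Next, because $\la_-<\la_+$ the open interval $(\la_-,\la_+)$ is nonempty, so I may pick some $\la\in(\la_-,\la_+)$ with $\la\neq\rho$. For this $\la$, the remark following \eqref{eq:Douglas} guarantees that the Douglas operator $D=D(\la)\in\mc{L}(\HH)$ is invertible, and \eqref{eq:Douglas} gives $A+\la B=WDD^*W$.

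Subtracting the two identities yields $(\la-\rho)B=W\big(DD^*-I\big)W$, and since $\la-\rho\neq0$ I may divide and set
\[
G:=\frac{1}{\la-\rho}\big(DD^*-I\big).
\]
Then $G\in\mc{L}(\HH)$ because $D$ is bounded, $G$ is selfadjoint because $DD^*$ and $I$ are selfadjoint and $\la-\rho\in\RR$, and by construction $B=WGW$, which is the claim.

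There is essentially no analytic obstacle here: all the real work (injectivity of $W$, density of $\mc{R}$, constancy of the ranges on the open interval which produces the common factor $W$, and boundedness of the operators $D(\la)$) is already packaged into \eqref{eq:Douglas} together with Propositions \ref{nucleo del interior} and \ref{rango raiz}. The one point requiring care is to express $G$ through the bounded operator $D$ rather than through $W^{-1}$: although formally $G=\frac{1}{\la-\rho}\big(W^{-1}(A+\la B)W^{-1}-I\big)$, the operator $W^{-1}$ is only densely defined on $\mc{R}$ and is in general unbounded, so boundedness of $G$ must be read off from the bounded factor $DD^*$ and not from this formal expression.
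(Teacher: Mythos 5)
Your proposal is correct and is essentially identical to the paper's own proof: both subtract $A+\rho B=W^2$ from the Douglas factorization $A+\la B=WDD^*W$ at some $\la\in(\la_-,\la_+)$, $\la\neq\rho$, and set $G:=\frac{1}{\la-\rho}(DD^*-I)$. The only detail the paper adds, which your argument omits but which follows immediately from injectivity of $W$ and density of $\mc{R}$, is that $G\big|_{\mc{R}}=W^{-1}BW^{-1}\big|_{\mc{R}}$, so the resulting $G$ is independent of the chosen $\la$.
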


\begin{proof}
Given $\la\in(\la_-,\la_+)$, $\la\neq\rho$, since $A+\la B=A+\rho B+(\la-\rho)B=W^2+(\la-\rho)B$,
by \eqref{eq:Douglas} we have that $WDD^*W=W^2+(\la-\rho)B,$
or equivalently, $(\la-\rho)B=W(DD^*-I)W$. Then,
\[
B=\frac{1}{\la-\rho}W(DD^*-I)W=WGW,
\]
where $G:=\frac{1}{\la-\rho}(DD^*-I)\in\mc{L}(\HH)$ is selfadjoint. Since $W$ is injective, we get that
\begin{equation*}\label{eq:definiendo_G}
G\big|_{\mc{R}}=W^{-1} BW^{-1}\big|_\mc{R},
\end{equation*}
and therefore $G$ only depends on $\rho$.
\end{proof}

Denote by $G\in\mc{L}(\HH)$ the (unique) selfadjoint operator
given in Lemma \ref{lema_G_acotado}. Then, $DD^*=I+(\la-\rho)G$ is positive semidefinite for $\la\in[\la_-,\la_+]$ and
\begin{equation}\label{eq:pencil_alternativo}
A+\la B=W\big(I+(\la-\rho) G\big)W\quad\quad\text{for each $\la\in[\la_-,\la_+]$.}
\end{equation}

Thus,

\begin{cor}\label{cor:lambdas_distintos_0} 	If $\la_-<\la_+$ then,
\[
R(A)+R(B)=\text{\normalfont span}\bigg\{\bigcup_{\la\in [\la_-,\la_+]}R(A+\la B)\bigg\}\subseteq \mc{R}.
\]
\end{cor}

We now analyze the case in which $R(A+\rho B)=R(A)+R(B)$. We frequently use that for $C,D\in\mc{L}(\HH)$,
\[
R(C+D)=R(C)+R(D)\quad\quad\text{if and only if}\quad\quad R(C)\subseteq R(C+D),
\]

\begin{prop}\label{prop:rango_invariante}
Assume that $\la_-<\la_+$. Given $\la\in(\la_-,\la_+)$:
\begin{enumerate}[label=\roman*)]
\item if $\la\neq\rho$, $R(A+\la B)\subseteq R(A+\rho B)$ if and only if $R(A+\rho B)=R(A)+R(B)$;
\item $R(A+\la B)=R(A)+R(B)$ if and only if $\big(I+(\la-\rho)G\big)(\mc{R})=\mc{R}+G(\mc{R})$;
\item $R(A+\la B)=R(A+\rho B)$ if and only if $\big(I+(\la-\rho)G\big)(\mc{R})=\mc{R}$.
\end{enumerate}
\end{prop}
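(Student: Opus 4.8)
The plan is to push everything through the factorization \eqref{eq:pencil_alternativo}, $A+\la B = W\big(I+(\la-\rho)G\big)W$, exploiting that $W$ is injective with $R(W)=\mc{R}$. The computational fact I would record first is that for any $K\in\mc{L}(\HH)$ one has $R(WKW)=W\big(K(\mc{R})\big)$, since $R(WKW)=\{WKWx:x\in\HH\}=W\big(K(R(W))\big)$. Applying this with $K=I+(\la-\rho)G$, with $K=I$ (so that $A+\rho B=W^2=WIW$), with $K=G$ (so that $B=WGW$ by Lemma \ref{lema_G_acotado}), and with $K=I-\rho G$ (so that $A=(A+\rho B)-\rho B=W(I-\rho G)W$), I obtain
\begin{align*}
R(A+\la B)&=W\big((I+(\la-\rho)G)(\mc{R})\big), & R(A+\rho B)&=W(\mc{R}),\\
R(A)&=W\big((I-\rho G)(\mc{R})\big), & R(B)&=W\big(G(\mc{R})\big).
\end{align*}
Since $W$ is linear, $R(A)+R(B)=W\big((I-\rho G)(\mc{R})+G(\mc{R})\big)$; and because for $r,r'\in\mc{R}$ one has $(I-\rho G)r+Gr'=r+G(r'-\rho r)$, with $r'-\rho r$ ranging over all of $\mc{R}$, it follows that $(I-\rho G)(\mc{R})+G(\mc{R})=\mc{R}+G(\mc{R})$, whence $R(A)+R(B)=W\big(\mc{R}+G(\mc{R})\big)$.

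With these identities, parts \emph{ii)} and \emph{iii)} reduce to a cancellation: since $W$ is injective, $W(S_1)=W(S_2)$ forces $S_1=S_2$ for subspaces $S_1,S_2$ of $\HH$. Thus $R(A+\la B)=R(A)+R(B)$ is equivalent to $(I+(\la-\rho)G)(\mc{R})=\mc{R}+G(\mc{R})$, which is \emph{ii)}, and $R(A+\la B)=R(A+\rho B)$ is equivalent to $(I+(\la-\rho)G)(\mc{R})=\mc{R}$, which is \emph{iii)}.

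For part \emph{i)} I would argue more directly, without the factorization. Corollary \ref{cor:lambdas_distintos_0} always gives $R(A+\rho B)\subseteq R(A)+R(B)$, so for the forward implication it suffices to prove the reverse inclusion under the hypothesis $R(A+\la B)\subseteq R(A+\rho B)$. Writing $(\la-\rho)B=(A+\la B)-(A+\rho B)$ and using $\la\neq\rho$ yields $R(B)\subseteq R(A+\la B)+R(A+\rho B)\subseteq R(A+\rho B)$; then $A=(A+\rho B)-\rho B$ gives $R(A)\subseteq R(A+\rho B)$, so $R(A)+R(B)\subseteq R(A+\rho B)$ and equality holds. Conversely, if $R(A+\rho B)=R(A)+R(B)$, then Corollary \ref{cor:lambdas_distintos_0} bounds $R(A+\la B)\subseteq R(A)+R(B)=R(A+\rho B)$, giving the desired inclusion.

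The step I expect to need the most care is the cancellation through $W$: one must ensure that the objects involved are genuine \emph{ranges} (images of the whole space), not their closures, so that $R(WKW)=W\big(K(\mc{R})\big)$ holds on the nose and the injectivity of $W$ can legitimately be used to strip $W$ from both sides of an equality of subspaces. The only other point deserving attention is the subspace identity $(I-\rho G)(\mc{R})+G(\mc{R})=\mc{R}+G(\mc{R})$, which relies on $\mc{R}$ being a linear manifold stable under the recombination used to rewrite the sum.
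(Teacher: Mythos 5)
Your proposal is correct and follows essentially the same route as the paper: part \emph{i)} by the same algebraic rearrangement $(\la-\rho)B=(A+\la B)-(A+\rho B)$, and parts \emph{ii)} and \emph{iii)} by factoring everything through \eqref{eq:pencil_alternativo} and cancelling the injective operator $W$ (the paper writes this as applying $W^{-1}$, and sidesteps your identity $(I-\rho G)(\mc{R})+G(\mc{R})=\mc{R}+G(\mc{R})$ by using $R(A)+R(B)=R(W^2)+R(B)$ directly, a negligible difference in bookkeeping).
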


\begin{proof} 
\noi{\it i)} If $R(A+\la B)\subseteq R(A+\rho B)$, then
\[
(\la-\rho)Bx=(A+\la B)x-(A+\rho B)x\in R(A+\rho B)\quad\quad\text{for every $x\in\HH$.}
\]
Hence, $R(B)\subseteq R(A+\rho B)$, or equivalently, $R(A+\rho B)=R(A)+R(B)$. The converse is trivial.

\smallskip

\noi{\it ii)} Since $R(A+\la B)=W\big(I+(\la-\rho)G\big)(\mc{R})$ and
\[
R(A)+R(B)=R(W^2)+R(B)=W\big(\mc{R}+G(\mc{R})\big),
\]
applying $W^{-1}$ on both sides of $R(A+\la B)=R(A)+R(B)$ yields 
$\big(I+(\la-\rho)G\big)(\mc{R})=\mc{R}+G(\mc{R})$. The converse follows again applying $W$ to both sides of the equality.

\smallskip

\noi{\it iii)} By \eqref{eq:pencil_alternativo}, $R(A+\la B)=R(A+\rho B)$ if and only if $W\big(I+(\la-\rho)G\big)(\mc{R})=W(\mc{R})$, or equivalently,
applying $W^{-1}$ on both sides of this last equality, $\big(I+(\la-\rho)G\big)(\mc{R})=\mc{R}$.
\end{proof}

Since $I+(\la-\rho)G$ is invertible for any $\la\in(\la_-,\la_+)$, the equality $\big(I+(\la-\rho) G\big)(\mc{R})=\mc{R}$ can be trivially expressed as 
\[
\big(I+(\la-\rho) G\big)(\mc{R})=\mc{R}\cap R\big(I+(\la-\rho) G\big).
\]
The next lemma states a more general result that also considers the case in which $\lambda$ takes the values in the extrema of the
interval. This is a technical result required in the last section.

\begin{lema}\label{lema_condiciones_rangos} Assume that $\la_-<\la_+$. Given $\la\in[\la_-,\la_+]$, then
\begin{equation}\label{eq_dos_condiciones}
R(A+\la B)=R(A+\rho B)\cap R(WDD^*)\,\,\text{if and only if}\,\,\big(I+(\la-\rho) G\big)(\mc{R})=\mc{R}\cap R\big(I+(\la-\rho) G\big).
\end{equation}
\end{lema}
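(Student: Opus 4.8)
The plan is to reduce the claimed equivalence to two elementary properties of the injective operator $W$, after rewriting each of the four ranges appearing in \eqref{eq_dos_condiciones} as the image under $W$ of an explicit subspace of $\HH$. Using \eqref{eq:pencil_alternativo} and $DD^*=I+(\la-\rho)G$, together with the fact that $W$ maps $\HH$ onto $\mc{R}$, I would first record the identities
\[
R(A+\la B)=W\big((I+(\la-\rho)G)(\mc{R})\big),\qquad R(A+\rho B)=R(W^2)=W(\mc{R}),
\]
and, since $R(DD^*)=R\big(I+(\la-\rho)G\big)$,
\[
R(WDD^*)=W\big(R(I+(\la-\rho)G)\big).
\]
With these three identities in hand, the left-hand condition in \eqref{eq_dos_condiciones} becomes the assertion $W\big((I+(\la-\rho)G)(\mc{R})\big)=W(\mc{R})\cap W\big(R(I+(\la-\rho)G)\big)$.

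Next I would invoke two properties of the injective operator $W$. First, injectivity yields $W(S_1)\cap W(S_2)=W(S_1\cap S_2)$ for arbitrary subsets $S_1,S_2\subseteq\HH$ (a common image $Ws_1=Ws_2$ forces $s_1=s_2$, hence comes from a common preimage). Applying this with $S_1=\mc{R}$ and $S_2=R(I+(\la-\rho)G)$ rewrites the right-hand side of the displayed assertion as $W\big(\mc{R}\cap R(I+(\la-\rho)G)\big)$. Second, injectivity makes $W$ left-cancellable on subsets, i.e. $W(U)=W(V)$ iff $U=V$. Cancelling $W$ then turns the equality into $(I+(\la-\rho)G)(\mc{R})=\mc{R}\cap R\big(I+(\la-\rho)G\big)$, which is exactly the right-hand condition of \eqref{eq_dos_condiciones}; reading the chain backwards gives the reverse implication.

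The only point that needs care — and the place where the hypothesis $\la_-<\la_+$ enters — is the injectivity of $W$, which underpins both the distribution over the intersection and the cancellation. This is guaranteed because $\rho=\tfrac{\la_-+\la_+}{2}\in(\la_-,\la_+)$, so by Proposition \ref{nucleo del interior} we have $N(A+\rho B)=N(A)\cap N(B)=\set{0}$ and hence $W=(A+\rho B)^{1/2}$ is injective. I would stress that $W$ is fixed throughout (it depends only on $\rho$), so nothing in the argument requires $DD^*$ to be invertible; this is precisely what lets the endpoints $\la=\la_\pm$ be treated on the same footing as interior points, thereby extending Proposition \ref{prop:rango_invariante}. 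The degenerate case $\la=\rho$, where $DD^*=I$, is consistent, since both sides of \eqref{eq_dos_condiciones} then reduce to trivially valid identities (using $W(\mc{R})=R(W^2)\subseteq\mc{R}$).
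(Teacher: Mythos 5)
Your proof is correct and follows essentially the same route as the paper's: both rewrite each of the four ranges as the image under $W$ of a subspace of $\HH$ (via $A+\la B=W\big(I+(\la-\rho)G\big)W$, $A+\rho B=W^2$ and $DD^*=I+(\la-\rho)G$), then use the injectivity of $W$ both to distribute over the intersection and to cancel $W$ on both sides. The only cosmetic difference is that the paper keeps the intermediate step phrased in terms of $DD^*$ before substituting $I+(\la-\rho)G$, whereas you substitute from the start.
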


\begin{proof}
To prove the equivalence, note that
\[
R(A+\la B)=R(A+\rho B)\cap R(WDD^*)\quad\quad\text{if and only if}\quad\quad R(WDD^*W)=R(W^2)\cap R(WDD^*).
\]
The last equality holds if and only if $W\big(DD^*(\mc{R})\big)=W\big(\mc{R}\cap R(DD^*)\big)$,
or equivalently, $\big(I+(\la-\rho)G\big)(\mc{R})=\mc{R}\cap R\big(I+(\la-\rho)G\big)$,
where we used that $W$ is injective.
\end{proof}

\begin{obs} The first equality in \eqref{eq_dos_condiciones} can be expressed in terms of the operators $A$ and $B$.
In fact, by \eqref{eq:Douglas} we have that $WDD^*W=A+\la B$. Then $DD^*W=W^{-1}(A+\la B)$,
or equivalently,
\[
WDD^*=\big(W^{-1}(A+\la B)\big)^*.
\]
\end{obs}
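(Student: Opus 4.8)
The plan is to eliminate the auxiliary Douglas operator $D$ from the term $R(WDD^*)$ occurring in the first equality of \eqref{eq_dos_condiciones}, so that the condition is phrased through $A$, $B$ and $W=(A+\rho B)^{1/2}$ alone. First I would recall from Corollary \ref{cor:lambdas_distintos_0} that $R(A+\la B)\subseteq\mc{R}=R(W)$ for every $\la\in[\la_-,\la_+]$, and from Propositions \ref{nucleo del interior} and \ref{rango raiz} that $W$ is injective, so $W^{-1}$ is well defined on the dense subspace $\mc{R}$. Since $R(A+\la B)$ lands inside $\mc{R}=\dom(W^{-1})$, the composition $W^{-1}(A+\la B)$ is meaningful on all of $\HH$.

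Next, using the Douglas factorization \eqref{eq:Douglas}, namely $A+\la B=WDD^*W$, I would compute $W^{-1}(A+\la B)=W^{-1}WDD^*W=DD^*W$, where $W^{-1}W=I$ thanks to injectivity of $W$. The right-hand side is a product of bounded operators, so even though $W^{-1}$ is in general unbounded, the composition $W^{-1}(A+\la B)=DD^*W$ is a bounded operator defined on the whole space; this is the cancellation that legitimises the adjoint step to follow.

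Taking adjoints and using that $W$ and $DD^*$ are selfadjoint would then give $\big(W^{-1}(A+\la B)\big)^*=(DD^*W)^*=W^*(DD^*)^*=WDD^*$, whence $R(WDD^*)=R\big((W^{-1}(A+\la B))^*\big)$. Substituting this identity into the first equality of \eqref{eq_dos_condiciones}, together with $A+\rho B=W^2$, rewrites the entire range condition using only $A$, $B$ (via $W$ and $W^{-1}$), which is exactly the assertion of the remark.

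The hard part will be the domain bookkeeping in the adjoint step: because $W^{-1}$ is unbounded and only densely defined, one cannot simply reverse the order of factors in $\big(W^{-1}(A+\la B)\big)^*$ and hope to land on $WDD^*$. The resolution is to notice \emph{first} the collapse $W^{-1}(A+\la B)=DD^*W$ into a single bounded operator, after which its adjoint is unambiguous; and it is precisely the inclusion $R(A+\la B)\subseteq\mc{R}$ from Corollary \ref{cor:lambdas_distintos_0} that guarantees this cancellation holds on all of $\HH$ rather than merely on a dense subspace.
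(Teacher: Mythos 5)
Your proposal is correct and follows essentially the same route as the paper's remark: apply $W^{-1}$ to the Douglas factorization $A+\la B=WDD^*W$ to obtain the bounded operator $DD^*W$, then take adjoints using the selfadjointness of $W$ and $DD^*$. The extra domain bookkeeping you supply (justified by $R(A+\la B)\subseteq\mc{R}$ and the injectivity of $W$) merely makes explicit what the paper leaves implicit.
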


\section{Quadratically constrained quadratic programming with a constraint}\label{section_problem}

Given two indefinite selfadjoint operators $A,B\in\mc{L}(\HH)$ and vectors $a,b\in\HH$,
consider the real valued functions
\[
f(x):=\PI{Ax}{x}+2\real\PI{a}{x}\quad\quad\text{and}\quad\quad g(x):=\PI{Bx}{x}+2\real\PI{b}{x},\qquad x\in\HH.
\]

The main purpose of this paper is to study the following QP1QC:

\begin{customprob}{1}
Given a constant $\beta\in\RR$,
analyze the existence of
\[
\min\,f(x)\qquad\text{subject to}\qquad g(x)\leq\beta.
\]
Denote by $\mc{W}(\beta,a,b)$ its set of solutions.
\end{customprob}

Under certain conditions the analysis of this problem reduces to analyzing the associated
problem with an equality constraint.

\begin{customprob}{2}
Given a constant $\beta\in\RR$, analyze the existence of
\[
\min\,f(x)\qquad\text{subject to}\qquad g(x)=\beta.
\]
Denote by $\ZZ(\beta,a,b)$ its set of solutions.
\end{customprob}

\smallskip

Using Fr\'echet derivatives we now show that if Problem \ref{pb 1} admits a solution, then the sets of solutions to
Problems \ref{pb 1} and \ref{pb 2} coincide.
The functions $f$ and $g$ are Fr\'echet differentiable at every $x\in\HH$ and the first and second order Fr\'echet derivatives of $f$
are given by
\[
Df(x)\ \dx =2\real(\PI{Ax+a}{\dx}), \qquad \dx\in\HH,
\]
\[
D^2f(x)\ (\dx_1,\dx_2) =2\real(\PI{A\Delta x_1}{\Delta x_2}), \qquad \Delta x_1,\Delta x_2\in\HH.
\]
The derivatives of $g$ can be expressed analogously.

\begin{prop}\label{prop_primer_equivalencia}
Given $\beta\in\RR$ and $(a,b)\in\HH\x\HH$, if $\mc{W}(\beta,a,b)\neq\varnothing$ then
\[
\min_{g(x)\leq\beta}f(x)=\min_{g(x)=\beta}f(x)\qquad\text{and}\qquad\mc{W}(\beta,a,b)=\ZZ(\beta,a,b).
\]
\end{prop}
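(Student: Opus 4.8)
The plan is to show that any minimizer over the inequality-constrained region $\{g(x)\le\beta\}$ must in fact lie on the boundary $\{g(x)=\beta\}$, from which both assertions follow immediately. Suppose $x_0\in\mc{W}(\beta,a,b)$, so $f(x_0)\le f(x)$ for all $x$ with $g(x)\le\beta$. I will argue by contradiction, assuming $g(x_0)<\beta$, i.e. that $x_0$ is an interior point of the feasible region. The key is that at such an interior minimizer the first-order optimality condition $Df(x_0)=0$ holds unconstrained, which by the formula $Df(x_0)\,\dx=2\real\PI{Ax_0+a}{\dx}$ forces $Ax_0+a=0$. Since $x_0$ is then an unconstrained local (indeed global, over the interior) minimizer of $f$, the second-order condition gives $D^2f(x_0)(\dx,\dx)=2\PI{A\dx}{\dx}\ge0$ for all $\dx$, so $A$ is positive semidefinite.

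This is exactly the contradiction I am aiming for: the standing assumption of the paper is that $A$ is indefinite, so $A\ge0$ is impossible. Thus $g(x_0)=\beta$ necessarily. To make the argument fully rigorous I would spell out why interiority gives $Df(x_0)=0$: since $g$ is continuous and $g(x_0)<\beta$, there is an open ball $B(x_0,r)$ contained in $\{g(x)<\beta\}\subseteq\{g(x)\le\beta\}$, so $x_0$ minimizes $f$ over this whole ball without constraint. Fréchet differentiability of $f$ then yields $Df(x_0)=0$, and the standard second-order necessary condition for an unconstrained local minimum yields $D^2f(x_0)\ge0$, i.e. $A\ge0$, contradicting that $A$ is indefinite.

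Once $g(x_0)=\beta$ is established for every $x_0\in\mc{W}(\beta,a,b)$, the conclusions are formal. Every solution of Problem 1 is feasible for Problem 2 and achieves the value $\min_{g(x)\le\beta}f(x)$; conversely the feasible set $\{g(x)=\beta\}$ of Problem 2 is contained in $\{g(x)\le\beta\}$, so $\min_{g(x)=\beta}f(x)\ge\min_{g(x)\le\beta}f(x)$, while the existence of $x_0$ with $g(x_0)=\beta$ attaining the inequality-minimum gives the reverse inequality. Hence the two minima coincide, $x_0\in\ZZ(\beta,a,b)$, and therefore $\mc{W}(\beta,a,b)\subseteq\ZZ(\beta,a,b)$. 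For the reverse inclusion, any $x_1\in\ZZ(\beta,a,b)$ satisfies $g(x_1)=\beta$ hence is feasible for Problem 1 and attains $f(x_1)=\min_{g(x)=\beta}f(x)=\min_{g(x)\le\beta}f(x)$, so $x_1\in\mc{W}(\beta,a,b)$. This gives equality of the two solution sets.

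The main obstacle is the interior-point step: one must be careful that minimality over the inequality region genuinely yields an \emph{unconstrained} local minimum at an interior point, so that both the vanishing of the first derivative and the positive semidefiniteness coming from the second derivative are legitimately available. The whole proof hinges on extracting $A\ge0$ from second-order information and colliding it with the indefiniteness hypothesis; the degenerate possibility that no interior feasible point exists at all (for instance if $\{g(x)<\beta\}$ is empty) is harmless, since in that case every feasible point already lies on the boundary and there is nothing to prove.
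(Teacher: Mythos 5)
Your proof is correct and follows essentially the same route as the paper: assume a minimizer $x_0$ with $g(x_0)<\beta$, note it is then an unconstrained local minimizer, and use the second-order necessary condition $D^2f(x_0)(\dx,\dx)=2\PI{A\dx}{\dx}\geq0$ to contradict the indefiniteness of $A$. The only differences are cosmetic: you spell out the ball-interiority argument and the two set inclusions that the paper leaves implicit, and you extract $Ax_0+a=0$ from the first-order condition, which is true but not needed for the contradiction.
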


\begin{proof}
Assume there exists $x_0\in \mc{W}(\beta,a,b)\setminus\ZZ(\beta,a,b)$, that is $g(x_0)<\beta$.
Then $f$ has a local minimum at $x_0$. By \cite[Prop. 2.4.18 and 2.4.19]{Marsden} we have that $Df(x_0)=0$ and
\[
0\leq D^2f(x_0)(\dx,\dx)=2\PI{A\dx}{\dx}\qquad\text{for every $\dx\in\HH$},
\]
but this is a contradiction because $A$ is indefinite. Then $x_0\in\ZZ(\beta,a,b)$, $\min_{g(x)\leq\beta}f(x)=\min_{g(x)=\beta}f(x)$ and
$\mc{W}(\beta,a,b)=\ZZ(\beta,a,b)$.
\end{proof}

The next theorem provides necessary and sufficient conditions for Problem \ref{pb 2}
to admit a solution,  as a direct consequence of Lagrange multipliers techniques.
We first show a technical result.

\begin{lema}\label{lema_cuenta_clave}
Given $(a,b)\in\HH\x\HH$, let $f$ and $g$ be as above. 
\begin{enumerate}[label=\roman*)]
\item If there exist $x\in\HH$ and $\la\in\RR$ such that $(A+ \la B)x=-(a+\la b)$, then
\[
(f+\la g)(y) - (f+\la g)(x)=\PI{(A+\la B)(y-x)}{y-x}\quad\quad\text{for every $y\in\HH$}.
\]
\item If there exists $x\in\HH$ such that $Bx=-b$, then 
\[
g(y)-g(x)=\PI{B(y-x)}{y-x}\quad\quad\text{for every $y\in\HH$}.
\]
\end{enumerate}
\end{lema}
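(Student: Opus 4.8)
The plan is to prove i) by a direct algebraic expansion — essentially completing the square for the quadratic functional $f+\la g$ — and then to recover ii) as a degenerate special case of i). First I would consolidate notation by setting $C:=A+\la B$ and $c:=a+\la b$. Since $A$ and $B$ are selfadjoint, so is $C$, and the functional rewrites cleanly as $(f+\la g)(y)=\PI{Cy}{y}+2\real\PI{c}{y}$ for every $y\in\HH$; in this notation the hypothesis of i) is simply $Cx=-c$.

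Next I would expand the right-hand side $\PI{C(y-x)}{y-x}=\PI{Cy}{y}-\PI{Cy}{x}-\PI{Cx}{y}+\PI{Cx}{x}$. Using selfadjointness of $C$ the two mixed terms fold together, $-\PI{Cy}{x}-\PI{Cx}{y}=-2\real\PI{Cx}{y}$, and substituting $Cx=-c$ turns this into $2\real\PI{c}{y}$. The constant term satisfies $\PI{Cx}{x}=-\PI{c}{x}$, which is real because $C$ is selfadjoint, hence equals $-\real\PI{c}{x}$. Collecting these gives $\PI{C(y-x)}{y-x}=\PI{Cy}{y}+2\real\PI{c}{y}-\real\PI{c}{x}$. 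On the other hand, computing the difference $(f+\la g)(y)-(f+\la g)(x)$ term by term and again using $\PI{Cx}{x}=-\real\PI{c}{x}$ produces exactly the same expression, which establishes i).

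The computation carries no genuine obstacle; the only point requiring care is the real-part bookkeeping forced by the complex scalars. One must invoke selfadjointness of $C$ at each step to collapse the mixed inner products $\PI{Cy}{x}$ and $\PI{Cx}{y}$ into a single real part (rather than treating them symmetrically as in the real case), and to ensure $\PI{c}{x}$ is real before writing $\PI{Cx}{x}=-\real\PI{c}{x}$. Finally, ii) follows from i) by taking $A=0$, $a=0$ and $\la=1$: the identity proved in i) uses only that $C=A+\la B$ is selfadjoint and never that $A$ or $B$ is indefinite, so the degenerate choice $A=0$ is harmless; with these values the hypothesis $Cx=-c$ collapses to $Bx=-b$ and the conclusion becomes $g(y)-g(x)=\PI{B(y-x)}{y-x}$, as claimed. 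Alternatively one can simply repeat the shorter computation directly for $g$.
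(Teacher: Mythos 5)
Your proposal is correct and takes essentially the same approach as the paper: a direct algebraic expansion (completing the square) that uses only the selfadjointness of $A+\lambda B$ and the substitution $(A+\lambda B)x=-(a+\lambda b)$, exactly as in the paper's proof of part i). The only cosmetic difference is in part ii), which the paper proves by repeating the same computation for $g$, while you deduce it from i) via the specialization $A=0$, $a=0$, $\lambda=1$, correctly noting that the identity never uses indefiniteness of $A$ or $B$.
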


\begin{proof}
\noi{\it i)} For $y\in\HH$,
\begin{align*}
&\big(f(y)+\lambda g(y)\big)-\big(f( x )+\lambda g( x )\big)=\nonumber\\
&=\big(\PI{(A+\lambda B)y}{y}+2\real\PI{a+\lambda b}{y}\big)-\big(\PI{(A+\lambda B) x }{ x }+2\real\PI{a+\lambda b}{ x }\big)\nonumber\\
&=\big(\PI{(A+\lambda B)y}{y}-2\real\PI{(A+\lambda B) x }{y}\big)-\big(\PI{(A+\lambda B) x }{ x }-2\PI{(A+\lambda B) x }{ x }\big)\nonumber\\
&=\PI{(A+\lambda B)y}{y}-2\real\PI{(A+\lambda B) x }{y}+\PI{(A+\lambda B) x }{ x }\nonumber\\
&=\PI{(A+\lambda B)(y- x )}{y- x }.
\end{align*}

\noi{\it ii)} It follows analogously.
\end{proof}

\begin{thm}\label{teo_derivada} 
Let $\beta\in\RR$ and $(a,b)\in\HH\x\HH$. Then, $ x \in \ZZ(\beta,a,b)$ if and only if there exists
$\lambda\in I_\geq(A,B)$ satisfying
\begin{equation}\label{eq:eq_normal}
\begin{cases}
\,(A+\lambda B) x =-(a+\lambda b),\\
\, \PI{B x }{ x }+2\real\PI{b}{ x }=\beta.
\end{cases}
\end{equation}
\end{thm}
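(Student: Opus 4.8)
The plan is to prove the two implications separately, the reverse (sufficiency) one being immediate from Lemma \ref{lema_cuenta_clave}. Assume first that some $\lambda\in I_\geq(A,B)$ solves \eqref{eq:eq_normal}. The second equation is exactly $g(x)=\beta$, so $x$ is feasible. Applying Lemma \ref{lema_cuenta_clave} i) with this $\lambda$ gives, for every $y\in\HH$, the identity $(f+\lambda g)(y)-(f+\lambda g)(x)=\PI{(A+\lambda B)(y-x)}{y-x}\geq0$, where the inequality uses $A+\lambda B\in\mc{L}(\HH)^+$. Restricting to feasible $y$, that is $g(y)=\beta=g(x)$, the terms $\lambda g(y)$ and $\lambda g(x)$ cancel and we are left with $f(y)\geq f(x)$; hence $x\in\ZZ(\beta,a,b)$.

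For the forward implication suppose $x\in\ZZ(\beta,a,b)$; feasibility yields the second equation, so the task is to produce $\lambda\in I_\geq(A,B)$ satisfying the first. I would start from the exact identity $g(x+h)-\beta=\PI{Bh}{h}+2\real\PI{Bx+b}{h}$ and split according to whether $Bx+b=0$. When $Bx+b\neq0$ the constraint is a submersion at $x$, so the Lagrange multiplier rule in Fr\'echet form provides $\lambda\in\RR$ with $Df(x)=-\lambda\,Dg(x)$; testing this identity against $h$ and against $ih$ recovers the full complex equality $(A+\lambda B)x=-(a+\lambda b)$. To locate $\lambda$ in $I_\geq(A,B)$ I would feed this equation back into Lemma \ref{lema_cuenta_clave} i) and use global optimality: for every $y$ with $g(y)=\beta$ one has $\PI{(A+\lambda B)(y-x)}{y-x}=f(y)-f(x)\geq0$, so $\PI{(A+\lambda B)h}{h}\geq0$ for every $h$ on the quadric $\PI{Bh}{h}+2\real\PI{Bx+b}{h}=0$.

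The crux is upgrading this constrained positivity to $A+\lambda B\geq0$ on all of $\HH$. For $v$ with $\PI{Bv}{v}\neq0$ and $\real\PI{Bx+b}{v}\neq0$, the nonzero scalar $s=-2\real\PI{Bx+b}{v}/\PI{Bv}{v}$ places $sv$ on the quadric, whence $s^2\PI{(A+\lambda B)v}{v}\geq0$ and thus $\PI{(A+\lambda B)v}{v}\geq0$; since such $v$ form a dense subset of $\HH$ (the complement of $Q(B)$ together with a real hyperplane, both nowhere dense because $B$ is indefinite and $Bx+b\neq0$), continuity gives $A+\lambda B\geq0$, i.e.\ $\lambda\in I_\geq(A,B)$. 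In the degenerate case $Bx+b=0$ the constraint reduces to $h\in Q(B)$ and optimality reads $\PI{Ah}{h}+2\real\PI{Ax+a}{h}\geq0$ on $Q(B)$; since $Q(B)$ is stable under real and complex scalings, running the real scalar through $\RR$ forces simultaneously $\PI{Ah}{h}\geq0$ on $Q(B)$ and $\PI{Ax+a}{h}=0$ on $Q(B)$. The former is precisely the Krein--Smul'jan hypothesis, so Proposition \ref{Azizov pencils} yields $I_\geq(A,B)\neq\varnothing$; the latter (again testing against $h$ and $ih$) gives $Ax+a\perp Q(B)$, and since $\Span Q(B)$ is dense for indefinite $B$ we conclude $Ax+a=0$. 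Together with $Bx+b=0$ this makes $(A+\lambda B)x=-(a+\lambda b)$ hold for every $\lambda$, so any $\lambda\in I_\geq(A,B)$ completes the argument.

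I expect the main obstacle to be exactly this passage from positivity on the constraint quadric to global positive semidefiniteness of $A+\lambda B$: the classical second-order necessary condition only delivers positivity on the tangent hyperplane $\ker Dg(x)$, which is too weak to force $A+\lambda B\geq0$, so it is essential to exploit global (not merely local) optimality through the full quadric and the scaling/density argument above. The degenerate case $Bx+b=0$ is the secondary difficulty, since the Lagrange rule does not apply there and one must instead recover the first equation and the non-emptiness of $I_\geq(A,B)$ directly from Proposition \ref{Azizov pencils}.
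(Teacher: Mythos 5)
Your proposal is correct, and its overall architecture coincides with the paper's proof: the sufficiency direction is the same application of Lemma \ref{lema_cuenta_clave} i) together with positive semidefiniteness of $A+\lambda B$, and your treatment of the degenerate case $Bx+b=0$ (feasible perturbations $h\in Q(B)$, real and complex scalings $te^{i\theta}h$ forcing $\PI{Ah}{h}\geq0$ and $\PI{Ax+a}{h}=0$ on $Q(B)$, then Proposition \ref{Azizov pencils} for $I_\geq(A,B)\neq\varnothing$ and $Q(B)^\bot=\set{0}$ for $Ax=-a$) is exactly the argument the paper writes out. The only real divergence is in the non-degenerate case $Bx+b\neq 0$: there the paper does not give an argument at all, but instead cites the proof of \cite[Thm. 3.5]{GZMMP}, whereas you supply a self-contained one — the Fr\'echet--Lagrange multiplier rule (valid here since $Dg(x)\neq0$ makes the constraint a submersion, and kernels split in Hilbert space) yields $(A+\lambda B)x=-(a+\lambda b)$, and then you upgrade the positivity of $\PI{(A+\lambda B)h}{h}$ on the constraint quadric, obtained from global optimality via Lemma \ref{lema_cuenta_clave}, to all of $\HH$ by the scaling $s=-2\real\PI{Bx+b}{v}/\PI{Bv}{v}$ and a density argument (the complement of $Q(B)\cup\set{v:\real\PI{Bx+b}{v}=0}$ is open and dense because both sets are closed with empty interior). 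That last step is the genuinely nontrivial point you identified correctly: second-order necessary conditions only give positivity on $\ker Dg(x)$, which is insufficient, so exploiting global optimality along the whole quadric is essential. Your version has the merit of being verifiable within the paper itself, at the cost of invoking the infinite-dimensional Lagrange rule explicitly; the paper's version keeps the proof short by delegating precisely this case to an external reference.
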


\begin{proof}
Assume that there exist $\la\in I_\geq(A,B)$ and $x\in\HH$ satisfying $g(x)=\beta$
and $(A+\la B) x =-(a+\la b)$. If $y\in\HH$ is such that $g(y)=\beta$, then using Lemma \ref{lema_cuenta_clave},
\[
f(y)-f( x )=\big(f(y)+\lambda g(y)\big)-\big(f( x )+\lambda g( x )\big)=\PI{(A+\lambda B)(y- x )}{y- x }\geq0.
\]
Hence, $x\in\ZZ(\beta,a,b)$.

\smallskip

Conversely, suppose that $x\in\ZZ(\beta,a,b)$. If $Bx\neq -b$, then the proof follows the lines of the proof of \cite[Thm. 3.5]{GZMMP}.
Now assume that $B x =-b$, and let $y\in Q(B)$. Considering that $g( x )=\beta$ and $\PI{By}{y}=0$,
\begin{align}\label{eq:algo_0}
g( x +y)&=\PI{B( x +y)}{ x +y}+2\real\PI{b}{ x +y}\nonumber\\
&=\parentesis{\PI{B x }{ x }+2\real\PI{b}{ x }}+\PI{By}{y}+2\real\PI{B x }{y}+2\real\PI{b}{y}\nonumber\\
&=\beta+2\real\PI{B x }{y}+2\real\PI{b}{y}=\beta.
\end{align}
Let $\theta\in[0,2\pi)$ and $t\in\RR$, since $te^{i\theta}y\in Q(B)$,
\[
\PI{A x }{ x }+2\real\PI{a}{ x }=f( x )\leq f( x +te^{i\theta}y)=\PI{A( x +te^{i\theta}y)}{ x +te^{i\theta}y}+2\real\PI{a}{ x +te^{i\theta}y},
\]
which yields
\[
0\leq t^2\PI{Ay}{y}+2\,t\real\PI{A x +a}{e^{i\theta}y}.
\]
If $\theta\in[0,2\pi)$ is such that $\PI{A x +a}{e^{i\theta}y}=|\PI{A x +a}{y}|$, then
\[
0\leq t^2\PI{Ay}{y}+2\,t|\PI{A x +a}{y}|\qquad\text{for every $t\in\RR$.}
\]
It follows that $\PI{Ay}{y}\geq0$ and $\PI{A x +a}{y}=0$. This implies that $\PI{Ay}{y}\geq0$ for every
$y\in Q(B)$. Since $B$ is indefinite it then holds that $I_\geq(A,B)\neq\varnothing$ and $A x +a\in Q(B)^\bot=\set{0}$, i.e. $A x =-a$.
Then, choosing any $\la\in I_\geq(A,B)$ we have that $(A+\la B)x=-(a+\la b)$.
\end{proof}

From the above proof it is clear that the case in which there exists $x\in\HH$ such that
\begin{equation}\label{compa}
\left\{\begin{array}{rcl}
Ax &=& -a, \\
Bx &=& -b,
\end{array}\right.
\end{equation}

has to be treated separately.
Let $\mc{A}$ be the set of those $(a,b)\in\HH\x\HH$ such that there exists a solution to the system \eqref{compa}
and $\mc{A}^c=\HH\x\HH\setminus\mc{A}$. If Problem \ref{pb 2} admits a solution for $(a,b)\in\mc{A}$, then any $\la\in I_\geq(A,B)$
is a suitable Lagrange multiplier. But if $(a,b)\in\mc{A}^c$, then the Lagrange multiplier $\la$ in \eqref{eq:eq_normal}
is unique, as states the next proposition. Its proof, which is similar to the proof of \cite[Prop. 5.1]{GZMMP}, is included in the Appendix.

\begin{prop}\label{prop_unicidad_lambda} 
Given $\beta\in\RR$, let $(a,b)\in \mc{A}^c$ be such that $\ZZ(\beta,a,b)\neq\varnothing$.
Then, there exists a unique $\lambda\in I_\geq(A,B)$ such that
$$(A+\lambda B) x =-(a+\lambda b),$$
for every $ x \in \ZZ(\beta,a,b)$.
\end{prop}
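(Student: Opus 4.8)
The plan is to show that any two Lagrange multipliers furnished by Theorem \ref{teo_derivada} must coincide. Since, by that theorem, membership of a point in $\ZZ(\beta,a,b)$ is equivalent to the existence of an associated multiplier in $I_\geq(A,B)$, establishing this coincidence simultaneously produces a single $\la$ valid for every minimizer and guarantees its uniqueness. Concretely, I would take $x_1,x_2\in\ZZ(\beta,a,b)$ with respective multipliers $\la_1,\la_2\in I_\geq(A,B)$, so that $(A+\la_i B)x_i=-(a+\la_i b)$ for $i=1,2$, assume toward a contradiction that $\la_1\neq\la_2$, and set $z:=x_2-x_1$.

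First I would extract optimality information via Lemma \ref{lema_cuenta_clave}. Applying part i) to the pair $(\la_1,x_1)$ and evaluating at $y=x_2$ gives $(f+\la_1 g)(x_2)-(f+\la_1 g)(x_1)=\PI{(A+\la_1 B)z}{z}$; because $x_1,x_2$ are both minimizers with $g(x_1)=g(x_2)=\beta$, the left-hand side vanishes, so $\PI{(A+\la_1 B)z}{z}=0$. The symmetric application to $(\la_2,x_2)$ with $y=x_1$ yields $\PI{(A+\la_2 B)z}{z}=0$.

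The key structural step is to upgrade these scalar identities to operator identities. Since $\la_1,\la_2\in I_\geq(A,B)$, both $A+\la_1 B$ and $A+\la_2 B$ are positive semidefinite, and for a positive semidefinite $P$ one has $\PI{Pz}{z}=\|P^{1/2}z\|^2=0\Rightarrow Pz=0$. Hence $(A+\la_1 B)z=(A+\la_2 B)z=0$, and subtracting these gives $(\la_1-\la_2)Bz=0$, so $Bz=0$ and therefore $Az=0$ as well; that is, $z\in N(A)\cap N(B)$.

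Finally I would feed $z$ back into the multiplier equations. Writing $x_2=x_1+z$ gives $(A+\la_1 B)x_2=(A+\la_1 B)x_1=-(a+\la_1 b)$, which combined with $(A+\la_2 B)x_2=-(a+\la_2 b)$ and subtracted produces $(\la_1-\la_2)Bx_2=-(\la_1-\la_2)b$, whence $Bx_2=-b$ and then $Ax_2=-a$. Thus $x_2$ solves system \eqref{compa}, contradicting $(a,b)\in\mc{A}^c$. I expect the main obstacle to be the bookkeeping that ties the two optimality conditions together: one must invoke Lemma \ref{lema_cuenta_clave} with both multipliers and both minimizers, and crucially pass from the vanishing of the quadratic forms to the vanishing of the operators applied to $z$ (using positive semidefiniteness) in order to obtain $z\in N(A)\cap N(B)$; once this is in hand, the remaining manipulations are routine linear algebra.
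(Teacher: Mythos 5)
Your proof is correct, and it takes a genuinely different---and substantially shorter---route than the paper's. The paper proves this proposition in the Appendix by the computational template of \cite[Prop. 5.1]{GZMMP}: after dismissing the trivial case $\la_-=\la_+$, it derives scalar identities from $f(x_1)=f(x_2)$ and the two multiplier equations, splits into cases according to whether $\real\PI{Bx_1}{x_2}+\real\PI{b}{x_1}+\real\PI{b}{x_2}-\beta$ vanishes, runs two Cauchy--Schwarz estimates in the degenerate branch, and finally deduces $x_1=x_2$ from $\PI{(A+\rho B)(x_1-x_2)}{x_1-x_2}=0$ via Proposition \ref{nucleo del interior}, which requires the standing assumption $N(A)\cap N(B)=\set{0}$, before reaching the same contradiction with $(a,b)\in\mc{A}^c$. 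Your key move, which the paper never exploits, is that positive semidefiniteness upgrades the vanishing of a quadratic form to the vanishing of the operator: $\PI{Pz}{z}=\|P^{1/2}z\|^2=0$ forces $Pz=0$ for $P\in\mc{L}(\HH)^+$. Combined with Lemma \ref{lema_cuenta_clave}, this turns the optimality of $x_1$ and $x_2$ directly into the operator identities $(A+\la_1B)z=(A+\la_2B)z=0$ for $z=x_2-x_1$, after which everything is a two-line subtraction. What your route buys: no case analysis, no Cauchy--Schwarz, no appeal to Proposition \ref{nucleo del interior}, no separate treatment of $\la_-=\la_+$, and---notably---no use of $N(A)\cap N(B)=\set{0}$, since you only need $z\in N(A)\cap N(B)$ rather than $z=0$. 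This last point is a genuine advantage: the paper invokes Proposition \ref{prop_unicidad_lambda} in Subsection \ref{sub_remarks} precisely in a situation where $N(A)\cap N(B)\neq\set{0}$, where the Appendix proof does not literally apply, whereas yours does verbatim. What the paper's longer computation buys in exchange is essentially only uniformity with the method of \cite{GZMMP}; it produces no extra information that is needed elsewhere.
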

\smallskip

In Proposition \ref{prop_primer_equivalencia} we show that the existence of solutions to Problem \ref{pb 1} guarantees the existence of solutions to Problem \ref{pb 2}.
We now state under which conditions Problem \ref{pb 2} admiting a solution implies that Problem \ref{pb 1} admits a solution.

\begin{thm}\label{teo_pb_equivalentes}
Given $\beta\in\RR$ and $(a,b)\in\HH\x\HH$, the following conditions are equivalent:
\begin{enumerate}[label=\roman*)]
\item $\mc{W}(\beta,a,b)\neq\varnothing$;
\item $\ZZ(\beta,a,b)\neq\varnothing$ and $\la_->0$.
\end{enumerate}
In this case, $\mc{W}(\beta,a,b)=\ZZ(\beta,a,b)$.
\end{thm}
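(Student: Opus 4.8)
The plan is to establish the two implications separately; the concluding identity $\mc{W}(\beta,a,b)=\ZZ(\beta,a,b)$ will then be automatic from Proposition \ref{prop_primer_equivalencia}, which guarantees $\mc{W}=\ZZ$ (in particular $\ZZ\neq\varnothing$) as soon as $\mc{W}\neq\varnothing$.

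For ii)$\to$i), I would pick $x_0\in\ZZ(\beta,a,b)$ and use Theorem \ref{teo_derivada} to produce a multiplier $\la\in I_\geq(A,B)=[\la_-,\la_+]$ with $(A+\la B)x_0=-(a+\la b)$ and $g(x_0)=\beta$. The hypothesis $\la_->0$ forces $\la>0$, and this sign is the whole point. For an arbitrary feasible $y$ (that is, $g(y)\leq\beta$), Lemma \ref{lema_cuenta_clave} i) gives $(f+\la g)(y)-(f+\la g)(x_0)=\PI{(A+\la B)(y-x_0)}{y-x_0}\geq0$, since $A+\la B\in\mc{L}(\HH)^+$. Rearranging yields $f(y)-f(x_0)\geq\la(\beta-g(y))\geq0$, so $x_0$ minimizes $f$ over the feasible set and $x_0\in\mc{W}(\beta,a,b)$.

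For i)$\to$ii), Proposition \ref{prop_primer_equivalencia} gives $\ZZ(\beta,a,b)\neq\varnothing$, and Theorem \ref{teo_derivada} then forces $I_\geq(A,B)\neq\varnothing$; since $A$ is indefinite, $0\notin I_\geq(A,B)=[\la_-,\la_+]$, so either $\la_->0$ or $\la_+<0$, and it remains to exclude $\la_+<0$. I would do this by contradiction. If $\la_+<0$, then $-\la_+=\sup_{x\in\mc{P}^-(B)}\frac{\PI{Ax}{x}}{\PI{Bx}{x}}>0$, so there is a direction $v\in\mc{P}^-(B)$ with $\frac{\PI{Av}{v}}{\PI{Bv}{v}}>0$; as $\PI{Bv}{v}<0$ this forces $\PI{Av}{v}<0$ too. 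Fixing $x_0\in\mc{W}(\beta,a,b)=\ZZ(\beta,a,b)$ and evaluating along the ray $x_0+tv$, both $g(x_0+tv)=\beta+t^2\PI{Bv}{v}+2t\real\PI{Bx_0+b}{v}$ and $f(x_0+tv)=f(x_0)+t^2\PI{Av}{v}+2t\real\PI{Ax_0+a}{v}$ are real quadratics in $t$ with strictly negative leading coefficients, hence both tend to $-\infty$. For $t$ large enough the point $x_0+tv$ is therefore feasible and has strictly smaller objective value than $x_0$, contradicting $x_0\in\mc{W}(\beta,a,b)$. Thus $\la_+<0$ is impossible and $\la_->0$.

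I expect this last step to be the main obstacle: one must manufacture a feasible descent direction out of the sign condition $\la_+<0$. The unlocking observation is that $\la_+<0$ is exactly the statement that its defining supremum over $\mc{P}^-(B)$ is positive, which produces a single direction $v$ on which both $B$ and $A$ are negative; along the corresponding ray feasibility and strict decrease of $f$ occur simultaneously for large $t$, so the linear terms never need to be controlled.
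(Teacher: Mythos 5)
Your proposal is correct and follows essentially the same route as the paper's proof: the ii)$\to$i) direction is the identical Lagrange-multiplier argument via Theorem \ref{teo_derivada}, Lemma \ref{lema_cuenta_clave} and the sign $\la\geq\la_->0$, and the i)$\to$ii) direction uses the same contradiction mechanism, namely a vector $v$ with $\PI{Bv}{v}<0$ and $\PI{Av}{v}<0$ producing an eventually feasible ray along which $f\to-\infty$. The only cosmetic differences are that the paper scales the ray $\alpha v$ through the origin rather than basing it at the minimizer $x_0$, and it concludes $\la_->0$ by verifying condition i) of Proposition \ref{prop_nueva_pencils} rather than excluding $\la_+<0$ directly from its defining supremum.
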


\begin{proof}
Suppose that $\mc{W}(\beta,a,b)\neq\varnothing$.
By Proposition \ref{prop_primer_equivalencia}, $\ZZ(\beta,a,b)\neq\varnothing$, which implies that $I_\geq(A,B)\neq\varnothing$
and $\PI{Ax}{x}\geq0$ whenever $\PI{Bx}{x}=0$. Now 
let $y\in\mc{W}(\beta,a,b)$ and suppose that there exists $x\in\HH$ such that $\PI{Bx}{x}<0$ and $\PI{Ax}{x}<0$.
Then, there exists $\alpha_0>0$ such that
\[
g(\alpha x)=\alpha^2\PI{Bx}{x}+\alpha\,2\real\PI{b}{x}\leq\beta\qquad\text{for every $\alpha>\alpha_0$.}
\]
But
\[
f(y)\leq f(\alpha x)=\alpha^2\PI{Ax}{x}+\alpha\,2\real\PI{a}{x}\xrightarrow[\alpha\to+\infty]{}-\infty,
\]
and this is a contradiction.
Then $\PI{Az}{z}\geq0$ whenever $\PI{Bz}{z}\leq0$, and $\la_->0$ by Proposition \ref{prop_nueva_pencils}.

\smallskip

Conversely, assume that $\ZZ(\beta,a,b)\neq\varnothing$ and $I_\geq(A,B)=[\lambda_-,\lambda_+]$ with $0<\lambda_-\leq\lambda_+$.
If $\w{x}\in\ZZ(\beta,a,b)$, then there exists $\lambda\in[\lambda_-,\lambda_+]$ such that $(A+\lambda B)\w{x}=-(a+\lambda b)$.
Let $x\in\HH$ be such that $g(x)\leq\beta$. Considering that $g(\w{x})=\beta$ and $0<\lambda$, by Lemma \ref{lema_cuenta_clave},
\[
f(x)-f(\widetilde{x})\geq\big(f(x)+\lambda g(x)\big)-\big(f(\w{x})+\lambda g(\w{x})\big)=\PI{(A+\lambda B)(x-\w{x})}{x-\w{x}}\geq0.
\]
Hence, $\w{x}\in\mc{W}(\beta,a,b)$. Finally, by Proposition \ref{prop_primer_equivalencia} this implies that $\mc{W}(\beta,a,b)=\ZZ(\beta,a,b)$.
\end{proof}

Even in a finite dimensional setting, it is easy to construct examples where for given $\beta$ and $(a,b)$, $\mc{W}(\beta,a,b)$ is empty while $\ZZ(\beta,a,b)\neq\varnothing$.
									   
\smallskip

To finish this section, we describe the set of solutions $\ZZ(\beta,a,b)$ when $(a,b)\in\mc{A}$.
In the next section we show that under certain conditions the same behaviour is present when considering a larger set of initial data points.

\begin{prop}\label{prop_caso_trivial} Assume that $I_\geq(A,B)\neq\varnothing$. Given $\beta\in\RR$ and $(a,b)\in\mc{A}$, let $x_0\in\HH$ be such that $Ax_0=-a$ and $Bx_0=-b$. The following conditions
hold:
\begin{enumerate}[label=\roman*)]
\item if $g(x_0)=\beta$, then
\[
\ZZ(\beta,a,b)=x_0+Q(A)\cap Q(B);
\]
\item if $g(x_0)>\beta$, then $\ZZ(\beta,a,b)\neq\varnothing$ if and only if $N(A+\la_+ B)\neq\set{0}$. In this case,
\[
\ZZ(\beta,a,b)=x_0+\set{x\in N(A+\la_+ B)\,:\,\PI{Bx}{x}=\beta+\PI{Bx_0}{x_0}};
\]
\item if $g(x_0)<\beta$, then $\ZZ(\beta,a,b)\neq\varnothing$ if and only if $N(A+\la_- B)\neq\set{0}$. In this case,
\[
\ZZ(\beta,a,b)=x_0+\set{x\in N(A+\la_-B)\,:\,\PI{Bx}{x}=\beta+\PI{Bx_0}{x_0}}.
\]
\end{enumerate}
\end{prop}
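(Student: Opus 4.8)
The plan is to reduce Problem \ref{pb 2} to a homogeneous problem by translating the variable by the particular solution $x_0$. Since $Ax_0=-a$ and $Bx_0=-b$, Lemma \ref{lema_cuenta_clave}(ii) applied to $g$, together with the analogous identity for $f$ (the computation of Lemma \ref{lema_cuenta_clave}(i) with $\la=0$), gives, for every $x\in\HH$ and $z:=x-x_0$,
\[
f(x)=f(x_0)+\PI{Az}{z},\qquad g(x)=g(x_0)+\PI{Bz}{z}.
\]
Setting $\gamma:=\beta-g(x_0)$ and using $g(x_0)=-\PI{Bx_0}{x_0}$ (because $b=-Bx_0$), we get $\gamma=\beta+\PI{Bx_0}{x_0}$, and the problem becomes: minimize $\PI{Az}{z}$ subject to $\PI{Bz}{z}=\gamma$. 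Denoting by $\St_\gamma$ the solution set of this homogeneous problem, we have $\ZZ(\beta,a,b)=x_0+\St_\gamma$, and the trichotomy $g(x_0)\lessgtr\beta$ is precisely $\gamma\lessgtr 0$.

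First I would dispatch case i) ($\gamma=0$). The feasible set is $Q(B)$, and since $I_\geq(A,B)\neq\varnothing$, Proposition \ref{Azizov pencils} yields $\PI{Az}{z}\geq0$ for every $z\in Q(B)$; as $z=0$ is feasible with value $0$, the minimum is $0$ and its minimizers are exactly the feasible vectors with $\PI{Az}{z}=0$, i.e. $\St_0=Q(A)\cap Q(B)$. This gives $\ZZ(\beta,a,b)=x_0+Q(A)\cap Q(B)$ without any appeal to $\la_-<\la_+$.

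For case ii) ($\gamma<0$) I would exploit the positivity of the pencil at the right endpoint. For any feasible $z$ and any $\la\in[\la_-,\la_+]$, from $A+\la B\geq0$ we get $\PI{Az}{z}\geq-\la\PI{Bz}{z}=-\la\gamma$; since $\gamma<0$ the sharpest bound is attained at $\la=\la_+$, so $\PI{Az}{z}\geq-\la_+\gamma$, with equality iff $\PI{(A+\la_+B)z}{z}=0$, i.e. (by positivity) iff $z\in N(A+\la_+B)$. Every $z\in N(A+\la_+B)$ satisfies $\PI{Az}{z}=-\la_+\PI{Bz}{z}$, so those with $\PI{Bz}{z}=\gamma$ all realize the value $-\la_+\gamma$; hence $\St_\gamma=\{\,z\in N(A+\la_+B):\PI{Bz}{z}=\gamma\,\}$, which is the asserted description after adding $x_0$. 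Case iii) ($\gamma>0$) is handled identically, with $\la_+$ replaced by $\la_-$ and $\mc{P}^-(B)$ by $\mc{P}^+(B)$.

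The main obstacle is the remaining existence step in cases ii) and iii): proving $\St_\gamma\neq\varnothing$ if and only if $N(A+\la_+B)\neq\{0\}$ (resp. $N(A+\la_-B)\neq\{0\}$). One direction is immediate, since any element of $\St_\gamma$ is a nonzero kernel vector. For the converse, because $N(A+\la_+B)$ is a subspace and $\PI{B(tz)}{tz}=t^2\PI{Bz}{z}$, it suffices to produce a single kernel vector with $\PI{Bz}{z}<0$ and then rescale it to the level $\gamma$. I would extract such a vector from the description of the endpoint kernel in Proposition \ref{nucleo del interior}(ii), which shows that the nonzero kernel elements lying outside $N(A)\cap N(B)$ belong to $\mc{P}^-(B)$. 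Controlling the sign of $\PI{Bz}{z}$ on $N(A+\la_+B)$, and treating the borderline configuration $\la_-=\la_+$ where Proposition \ref{nucleo del interior} is unavailable, is the delicate part of the argument.
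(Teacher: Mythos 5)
Your reduction to the homogeneous problem ($z=x-x_0$, minimize $\PI{Az}{z}$ subject to $\PI{Bz}{z}=\gamma$ with $\gamma=\beta+\PI{Bx_0}{x_0}$) and your case i) are exactly the paper's proof. The divergence is in cases ii)--iii), where you replace the paper's tool, Theorem \ref{teo_derivada}, by a direct positivity bound, and this substitution leaves a first (fixable) gap: the inequality $\PI{Az}{z}\geq-\la_+\gamma$ on the feasible set, with equality exactly on $N(A+\la_+B)$, identifies the minimizers only once you know that the optimal \emph{value} is $-\la_+\gamma$. If $\set{z\in N(A+\la_+B)\,:\,\PI{Bz}{z}=\gamma}$ were empty, nothing in your argument excludes a minimizer whose value is strictly above $-\la_+\gamma$; so the identity $\St_\gamma=\set{z\in N(A+\la_+B)\,:\,\PI{Bz}{z}=\gamma}$, and with it both the description of $\ZZ(\beta,a,b)$ and the ``only if'' half of ii), are not yet proved. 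The repair is short: every feasible $z$ lies in $\mc{P}^-(B)$, every $x\in\mc{P}^-(B)$ rescales to a feasible vector of value $\gamma\,\tfrac{\PI{Ax}{x}}{\PI{Bx}{x}}$, so by \eqref{eq:def_lambdas} the infimum equals $\gamma\sup_{x\in\mc{P}^-(B)}\tfrac{\PI{Ax}{x}}{\PI{Bx}{x}}=-\la_+\gamma$ and any minimizer must attain it. (The paper avoids this entirely by quoting Theorem \ref{teo_derivada}: every minimizer solves $(A+\la B)z=0$, $\PI{Bz}{z}=\gamma$ for some $\la\in[\la_-,\la_+]$, and $\gamma<0$ then forces $\la=\la_+$.)

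The decisive gap is the step you name and then leave open: that $N(A+\la_+B)\neq\set{0}$ produces a kernel vector with $\PI{Bz}{z}<0$, hence, after rescaling, a solution. This is one half of the equivalence asserted in ii), so the proof stops at its crux. When $\la_-<\la_+$ and $N(A)\cap N(B)=\set{0}$ (the assumption in force since Section \ref{section_pencils}), Proposition \ref{nucleo del interior}(ii) closes it at once: every nonzero $z\in N(A+\la_+B)$ lies in $\mc{P}^-(B)$, and $tz$ with $t^2=\gamma/\PI{Bz}{z}$ belongs to $\St_\gamma$. But your worry about the borderline case $\la_-=\la_+$ is not a formality, because there the implication can actually fail. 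Take in $\CC^3$
\[
A=\left(\begin{smallmatrix}0&-1&0\\-1&1&0\\0&0&2\end{smallmatrix}\right),\qquad B=\left(\begin{smallmatrix}0&1&0\\1&0&0\\0&0&-1\end{smallmatrix}\right);
\]
both are indefinite, $N(A)\cap N(B)=\set{0}$, and since the upper-left $2\x 2$ corner of $A+\la B$ has determinant $-(\la-1)^2$, one gets $I_\geq(A,B)=\set{1}$, with $A+B=\mathrm{diag}(0,1,1)$. Thus $N(A+\la_+B)=\CC e_1\neq\set{0}$ while $\PI{Be_1}{e_1}=0$, so for $\gamma<0$ no multiple of a kernel vector is feasible and (by Theorem \ref{teo_derivada}, or by the corrected bound above) $\ZZ(\gamma,0,0)=\varnothing$. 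Hence the missing step cannot be established in the generality in which you pose it --- indeed, read with only the hypothesis $I_\geq(A,B)\neq\varnothing$, the ``if'' direction of the statement itself requires $\la_-<\la_+$ --- and any complete argument must invoke Proposition \ref{nucleo del interior}(ii), which is precisely what your proposal defers.
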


\begin{proof} Denote $\nu=\beta+\PI{Bx_0}{x_0}$. 
Using that $Ax_0=-a$ and $Bx_0=-b$ yields
\[
f(x)=\PI{A(x-x_0)}{x-x_0}-\PI{Ax_0}{x_0}\qquad\text{and}\qquad g(x)=\PI{B(x-x_0)}{x-x_0}-\PI{Bx_0}{x_0}.
\]
Also, note that $g(x_0)=-\PI{Bx_0}{x_0}=\beta-\nu$. Then, $x\in\ZZ(\beta,a,b)$ if and only if $y:=x-x_0$ is a solution to
\[
\min \PI{Az}{z}\qquad\text{subject to}\qquad\PI{Bz}{z}=\nu.
\]
It follows that $\ZZ(\beta,a,b)\neq\varnothing$ if and only if $\ZZ(\nu,0,0)\neq\varnothing$, and in this case
\[
\mc{Z}(\beta,a,b)=x_0+\mc{Z}(\nu,0,0).
\]

\smallskip

\noi {\it i)} If $g(x_0)=\beta$, then $\nu=0$. Considering that $\PI{Ax}{x}\geq0$ whenever $\PI{Bx}{x}=0$, it follows that $\ZZ(\nu,0,0)=\ZZ(0,0,0)=Q(A)\cap Q(B)$.

\smallskip

\noi {\it ii)} The fact that $g(x)>\beta$ implies that $\nu<0$. Given $x\in\HH$, $x\in\ZZ(\nu,0,0)$ if and only if $(A+\la B)x=0$,
for some $\la\in [\la_-,\la_+]$, and $\PI{Bx}{x}=\nu<0$, see Theorem \ref{teo_derivada}. This only happens if $x\in N(A+\la B)\cap \mc{P}^-(B)$, or equivalently, $x\in N(A+\la_+B)$. Hence, $x\in\ZZ(\nu,0,0)$
if and only if $x\in N(A+\la_+B)$ is such that $\PI{Bx}{x}=\nu$.

\smallskip

\noi {\it iii)} The result follows using similar arguments as above.
\end{proof}

\section{Necessary and sufficient conditions for the existence of solutions}\label{section_conditions}

The aim of this section is to provide necessary and sufficient conditions for Problem \ref{pb 2} to admit a solution for
an appropiate set of initial data points $(a,b)$. In the first place,
if $b=Bx_0$ for some $x_0$ then $g(x)=\PI{B(x-x_0)}{x-x_0}+g(x_0)$, and $x\in\ZZ(\beta,a,b)$ if and only if $x+x_0\in\ZZ(\nu,a,0)$ for
$\nu=\PI{Bx_0}{x_0}+\beta$.
Hence, we focus on vectors $b$ belonging to $R(B)$.
In the second place, if $\ZZ(\nu,a,0)\neq\varnothing$, then by Theorem \ref{teo_derivada} there exists $\la\in I_\geq(A,B)$ such that
$a\in R(A+\la B)$. Since, for $I_\geq(A,B)=[\la_-,\la_+]$ with $\la_-<\la_+$,
\[
\text{span}\bigg\{\bigcup_{\la\in [\la_-,\la_+]}R(A+\la B)\bigg\}=R(A)+R(B),
\]
we consider vectors $a$ belonging to $R(A)+R(B)$.

Given $\beta\in\RR$ and vectors $a\in R(A)+R(B)$, $b\in R(B)$, if Problem \ref{pb 2} admits a solution then the set of solutions contains an affine manifold parallel to
$N(A)\cap N(B)$. Indeed, if $x_{(a,b)}\in\ZZ(\beta,a,b)$ then
\[
x_{(a,b)}+N(A)\cap N(B)\subseteq \ZZ(\beta,a,b).
\]
From now on we assume that $N(A)\cap N(B)=\set{0}$, though in Subsection \ref{sub_remarks} we discuss how the results can be easily expressed for the general case.

\medskip

We start by giving alternative formulations for the problem under consideration in this section.

\begin{prop}\label{prop_pivote} The following conditions are equivalent:
\begin{enumerate}[label=\roman*)]
\item for a fixed $\beta\in\RR$, $\ZZ(\beta,a,b)\neq\varnothing$ for every $a\in R(A)+R(B)$ and every $b\in R(B)$;
\item for each $\nu\in\RR$, $\ZZ(\nu,a,0)\neq\varnothing$ for every $a\in R(A)+R(B)$;
\item for each $\beta'\in\RR$, $\ZZ(\beta',a,b)\neq\varnothing$ for every $a\in R(A)+R(B)$ and every $b\in R(B)$.
\end{enumerate}
\end{prop}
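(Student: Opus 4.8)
The plan is to establish the cyclic chain $(iii)\Rightarrow(i)\Rightarrow(ii)\Rightarrow(iii)$, using as the single tool the translation reduction recorded just before the statement. If $b\in R(B)$, write $b=Bx_0$ with $x_0\in\HH$; then the substitution $z=x+x_0$ carries the data $(\beta,a,b)$ into $(\nu,a-Ax_0,0)$, where $\nu=\beta+\PI{Bx_0}{x_0}$, so that
\[
\ZZ(\beta,a,b)\neq\varnothing\quad\Longleftrightarrow\quad\ZZ(\nu,a-Ax_0,0)\neq\varnothing.
\]
Two structural facts make this reduction compatible with the quantifiers: first, $Ax_0\in R(A)\subseteq R(A)+R(B)$, so $a-Ax_0$ still ranges over $R(A)+R(B)$ as $a$ does; and second, since $B$ is indefinite, the quadratic form $x\mapsto\PI{Bx}{x}$ takes every real value, as one may scale a vector of $\mc{P}^+(B)$ to cover $[0,\infty)$ and one of $\mc{P}^-(B)$ to cover $(-\infty,0]$.

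The implication $(iii)\Rightarrow(i)$ is immediate, by specializing the universally quantified $\beta'$ of $(iii)$ to the fixed $\beta$ of $(i)$. For $(i)\Rightarrow(ii)$, I would fix $\nu\in\RR$ and an arbitrary $c\in R(A)+R(B)$. Using that $B$ is indefinite, choose $x_0$ with $\PI{Bx_0}{x_0}=\nu-\beta$, and set $b:=Bx_0\in R(B)$ and $a:=c+Ax_0\in R(A)+R(B)$, so that $a-Ax_0=c$ and $\beta+\PI{Bx_0}{x_0}=\nu$. Hypothesis $(i)$ then gives $\ZZ(\beta,a,b)\neq\varnothing$, whence the reduction yields $\ZZ(\nu,c,0)\neq\varnothing$; as $c$ was arbitrary, $(ii)$ follows.

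For $(ii)\Rightarrow(iii)$, I would fix $\beta'\in\RR$, $a\in R(A)+R(B)$ and $b\in R(B)$, write $b=Bx_0$, and put $\nu:=\beta'+\PI{Bx_0}{x_0}$. The reduction gives $\ZZ(\beta',a,b)\neq\varnothing$ if and only if $\ZZ(\nu,a-Ax_0,0)\neq\varnothing$, and the latter holds by $(ii)$ applied at this particular $\nu$ to the datum $a-Ax_0\in R(A)+R(B)$. I expect the only delicate point to be bookkeeping rather than substance: one must verify that the translation preserves the membership $a-Ax_0\in R(A)+R(B)$, and that a single fixed $\beta$ in $(i)$ is enough to reach every target $\nu$ in $(ii)$, which is precisely the step where indefiniteness of $B$, guaranteeing solvability of $\PI{Bx_0}{x_0}=\nu-\beta$, is essential.
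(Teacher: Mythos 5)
Your proof is correct, and it reaches the conclusion by a more elementary route than the paper's. The paper proves the cycle {\it i)}$\to${\it ii)}$\to${\it iii)}$\to${\it i)} by passing through the Lagrange characterization of Theorem \ref{teo_derivada}: from $\ZZ(\beta,a',b')\neq\varnothing$ it extracts a pair $(\la,x)$ solving the system \eqref{eq:eq_normal}, translates that system by an auxiliary vector $x_3$ with $\PI{Bx_3}{x_3}=\nu-\beta$, and then applies Theorem \ref{teo_derivada} again, in the sufficiency direction, to convert the translated system back into nonemptiness of the target solution set. You perform the same translation but directly on the optimization problem: the change of variables $z=x+x_0$ transforms objective and constraint simultaneously, carrying the data $(\beta,a,Bx_0)$ to $(\nu,a-Ax_0,0)$ up to an additive constant, so no multiplier machinery is needed. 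Both arguments rest on the two bookkeeping facts you isolate: $a-Ax_0$ ranges over $R(A)+R(B)$ exactly as $a$ does, and indefiniteness of $B$ makes $\PI{Bx_0}{x_0}=\nu-\beta$ solvable (the paper uses the latter silently when it posits $x_3$). Two remarks on the comparison. First, the sentence opening Section \ref{section_conditions}, which you cite as your ``single tool'', is stated there with an inaccuracy: it asserts $x\in\ZZ(\beta,a,b)$ if and only if $x+x_0\in\ZZ(\nu,a,0)$, overlooking that the translation also shifts the linear term of the objective; your corrected form $\ZZ(\nu,a-Ax_0,0)$ is the right one, and your argument would fail without that correction, so this piece of bookkeeping is substance rather than pedantry. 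Second, what the paper's heavier route buys is Remark \ref{obs_pivote}: by translating at the level of the system \eqref{eq:eq_normal}, the paper's proof shows that $\ZZ(\beta,a,b)\neq\varnothing$ is equivalent to solvability of \eqref{eq_sistema_con_c}, a fact used repeatedly in the rest of the section; your change-of-variables argument proves the proposition itself but does not produce that byproduct.
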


\begin{proof}
\noi {\it i)$\to$ii)} Given $\nu\in\RR$ and $a\in R(A)+R(B)$, let $x_1,x_2,x_3\in\HH$ be such that
\[
a=Ax_1+Bx_2\qquad\text{and}\qquad\PI{Bx_3}{x_3}=\nu-\beta.
\] 
Since $\ZZ\big(\beta,A(x_1+x_3)+Bx_2,Bx_3\big)\neq\varnothing$, by Theorem \ref{teo_derivada},
there exist $\la\in I_\geq(A,B)$ and $x\in\HH$ satisfying
\[
\begin{cases}
\, (A+\la B)x = -\big(A(x_1+x_3)+Bx_2+\la Bx_3\big),\\
\, \PI{Bx}{x}+2\real\PI{Bx_3}{x}=\beta,
\end{cases}
\]
or equivalently, using that $\nu=\PI{Bx_3}{x_3}+\beta$,
\begin{equation}\label{eq:sistema_para_obs}
\begin{cases}
\,(A+\la B)(x+x_3) = -(Ax_1+Bx_2)=-a,\\
\,\PI{B(x+x_3)}{x+x_3}=\nu.
\end{cases}
\end{equation}
Hence, $x+x_3\in\ZZ(\nu,a,0)$.

\smallskip

\noi {\it ii)$\to$iii)} Given $\beta'\in\RR$, $a\in R(A)+R(B)$ and $b\in R(B)$, let $x_1,x_2,x_3\in\HH$ and $\nu\in\RR$ be such that
\[
a=Ax_1+Bx_2,\qquad b=Bx_3 \qquad\text{and}\qquad\PI{Bx_3}{x_3}=\nu-\beta'.
\]
Since $\ZZ\big(\nu,A(x_1-x_3)+Bx_2,0\big)\neq\varnothing$, by Theorem \ref{teo_derivada}, there exist $\la\in I_{\geq}(A,B)$ and $x\in\HH$ satisfying
\[
\begin{cases}
\,(A+\la B)x = -\big(A(x_1-x_3)+Bx_2\big),\\
\,\PI{Bx}{x}=\nu.
\end{cases}
\]
Using that $\nu=\beta'+\PI{Bx_3}{x_3}$,
\[
\begin{cases}
\,(A+\la B)(x-x_3) = -\big(Ax_1+Bx_2+\la Bx_3\big)=-(a+\la b),\\
\,\PI{B(x-x_3)}{x-x_3}+2\real\PI{Bx_3}{x-x_3}=\beta',
\end{cases}
\]
which implies $x-x_3\in\ZZ(\beta',a,b)$.

\smallskip

\noi {\it iii)$\to$i)} This is trivial.

\end{proof}

\begin{obs}\label{obs_pivote} From the proof of Proposition \ref{prop_pivote} follows a fact that we often use in the rest of the section.
Namely, given $\beta\in\RR$, $a\in R(A)+R(B)$ and $b\in R(B)$, if 
$x_1,x_2,x_3\in\HH$ are such that $a=A(x_1+x_3)+Bx_2$ and $b=Bx_3$, then $\ZZ(\beta,a,b)\neq\varnothing$
if and only if there exists $(\la,y)\in I_\geq(A,B)\x\HH$ satisfying
\begin{equation}\label{eq_sistema_con_c}
\begin{cases}
\,(A+\la B)y = c,\\
\,\PI{By}{y}=\nu,
\end{cases}
\end{equation}
with
\begin{equation}\label{eq:u0}
c=-(Ax_1+Bx_2)\quad\quad\text{and}\quad\quad \nu=\PI{Bx_3}{x_3}+\beta
\end{equation}
To see this, take $y=x+x_3$ in \eqref{eq:sistema_para_obs}.
\end{obs}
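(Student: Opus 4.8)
The plan is to read the claim directly off Theorem \ref{teo_derivada} and then carry out the affine change of variable $y=x+x_3$, exactly as in the proof of Proposition \ref{prop_pivote}. First I would invoke Theorem \ref{teo_derivada}: since $\beta\in\RR$ and $(a,b)\in\HH\x\HH$, we have $\ZZ(\beta,a,b)\neq\varnothing$ if and only if there exist $\la\in I_\geq(A,B)$ and $x\in\HH$ satisfying $(A+\la B)x=-(a+\la b)$ and $\PI{Bx}{x}+2\real\PI{b}{x}=\beta$. The whole statement is thereby reduced to translating this pair of conditions into \eqref{eq_sistema_con_c} under the substitution $y:=x+x_3$.

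Next I would substitute the hypotheses $a=A(x_1+x_3)+Bx_2$ and $b=Bx_3$ into the first equation. With $c=-(Ax_1+Bx_2)$ as in \eqref{eq:u0}, one computes
\[
-(a+\la b)=-(Ax_1+Bx_2)-(A+\la B)x_3=c-(A+\la B)x_3,
\]
so that $(A+\la B)x=-(a+\la b)$ is equivalent to $(A+\la B)(x+x_3)=c$, i.e.\ $(A+\la B)y=c$. For the constraint I would expand, using that $B$ is selfadjoint and $b=Bx_3$,
\[
\PI{By}{y}=\PI{Bx}{x}+2\real\PI{Bx_3}{x}+\PI{Bx_3}{x_3}=\big(\PI{Bx}{x}+2\real\PI{b}{x}\big)+\PI{Bx_3}{x_3},
\]
so that $\PI{Bx}{x}+2\real\PI{b}{x}=\beta$ holds if and only if $\PI{By}{y}=\beta+\PI{Bx_3}{x_3}=\nu$, with $\nu$ as in \eqref{eq:u0}.

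Finally I would observe that $x\mapsto y=x+x_3$ is a bijection of $\HH$ which leaves $\la$ (ranging over $I_\geq(A,B)$) untouched; hence the system furnished by Theorem \ref{teo_derivada} is solvable for some $(\la,x)$ if and only if \eqref{eq_sistema_con_c} is solvable for the corresponding $(\la,y)$, giving the stated equivalence. There is no genuine obstacle here: the argument is a bookkeeping change of variable, and the only points demanding care are the algebraic identity $-(a+\la b)=c-(A+\la B)x_3$, which is valid \emph{simultaneously for every} $\la$ precisely because the shift $x_3$ does not depend on $\la$, and the correct appearance of the cross term $2\real\PI{Bx_3}{x}$ when expanding $\PI{By}{y}$ via the selfadjointness of $B$.
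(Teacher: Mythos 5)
Your proof is correct and is essentially the paper's own argument: the remark is precisely the combination of Theorem \ref{teo_derivada} with the change of variable $y=x+x_3$ carried out in the proof of Proposition \ref{prop_pivote} (equation \eqref{eq:sistema_para_obs}), and your algebraic verifications of $-(a+\la b)=c-(A+\la B)x_3$ and of the constraint identity match that computation exactly.
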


The next lemma establishes some necessary conditions for Problem \ref{pb 2} admitting a solution for any of the data points
of interest.

\begin{lema}\label{lema_condiciones_necesarias}
Given $\beta\in\RR$, if $\ZZ(\beta,a,b)\neq\varnothing$ for every $a\in R(A)+R(B)$ and every $b\in R(B)$, then the following conditions hold:
\begin{enumerate}[label=\roman*)]
\item $I_\geq(A,B)=[\la_-,\la_+]$ with $\lambda_-<\lambda_+$;
\item $N(A+\la_-B)\neq\set{0}$ and $N(A+\la_+B)\neq\set{0}$.
\end{enumerate}
\end{lema}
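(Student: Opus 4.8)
The plan is to establish $I_\geq(A,B)$ as a nonempty compact interval, then prove (ii), and finally use the vectors produced in (ii) together with a range obstruction to force the strict inequality in (i). I begin by applying the hypothesis to the datum $(a,b)=(0,0)$: since $\ZZ(\beta,0,0)\neq\varnothing$, Theorem \ref{teo_derivada} supplies some $\la\in I_\geq(A,B)$, so $I_\geq(A,B)\neq\varnothing$, and because $B$ is indefinite, Proposition \ref{Azizov pencils} gives $I_\geq(A,B)=[\la_-,\la_+]$ with $\la_-\leq\la_+$. I would record one fact for later use: $0\notin I_\geq(A,B)$, since $0\in I_\geq(A,B)$ would give $A=A+0\cdot B\in\mc{L}(\HH)^+$, contradicting that $A$ is indefinite.

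Next I would prove (ii). Since $B$ is indefinite, I can choose $x_0\in\mc{P}^-(B)$ and rescale so that $\PI{Bx_0}{x_0}<-\beta$; then $(a,b):=(-Ax_0,-Bx_0)\in R(A)\x R(B)$ lies in $\mc{A}$ (witnessed by $x_0$) and satisfies $g(x_0)=-\PI{Bx_0}{x_0}>\beta$. The hypothesis gives $\ZZ(\beta,a,b)\neq\varnothing$, so Proposition \ref{prop_caso_trivial}(ii) forces $N(A+\la_+B)\neq\set{0}$ and, more precisely, produces a vector $x_+\in N(A+\la_+B)$ with $\PI{Bx_+}{x_+}=\beta+\PI{Bx_0}{x_0}<0$. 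Symmetrically, choosing $x_0\in\mc{P}^+(B)$ with $\PI{Bx_0}{x_0}>-\beta$ and invoking Proposition \ref{prop_caso_trivial}(iii) yields $N(A+\la_-B)\neq\set{0}$ together with some $x_-\in N(A+\la_-B)$ with $\PI{Bx_-}{x_-}>0$. This settles (ii), and the key by-product is the vector $x_+$ with $\PI{Bx_+}{x_+}\neq 0$.

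To prove the strict inequality in (i) I would argue by contradiction. Suppose $\la_-=\la_+=:\la_0$ and set $T:=A+\la_0 B$, so $I_\geq(A,B)=\set{\la_0}$, $T\in\mc{L}(\HH)^+$, and $\la_0\neq 0$. Now $x_+\in N(T)$ and $\PI{Bx_+}{x_+}\neq 0$. Put $c:=Ax_+$; since $Tx_+=0$ we have $c=-\la_0 Bx_+$, hence $\PI{c}{x_+}=-\la_0\PI{Bx_+}{x_+}\neq 0$. On the other hand $T$ is selfadjoint, so $R(T)\subseteq N(T)^{\perp}$, and as $x_+\in N(T)$ every element of $R(T)$ is orthogonal to $x_+$; therefore $c\notin R(T)=R(A+\la_0 B)$. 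Finally I consider the datum $a:=-c=\la_0 Bx_+\in R(B)\subseteq R(A)+R(B)$ and $b:=0\in R(B)$: by Theorem \ref{teo_derivada}, since $I_\geq(A,B)=\set{\la_0}$, any point of $\ZZ(\beta,a,0)$ would require $(A+\la_0 B)y=-a=c$, i.e. $c\in R(A+\la_0 B)$, which is false. Thus $\ZZ(\beta,a,0)=\varnothing$, contradicting the hypothesis, and so $\la_-<\la_+$.

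The main obstacle is precisely this strictness. The tempting route — contradicting $\la_-=\la_+$ through the quadratic equation $\PI{By}{y}=\nu$ — fails, because the $B$-form can be arranged to take every real value on $N(T)$, so solvability of the constraint alone is not violated. The decisive point is a \emph{range} obstruction rather than a value obstruction: an element of $N(T)$ with nonzero $B$-form forces $Ax_+\notin R(A+\la_0 B)$. This is exactly why the nonzero-form vector from (ii) is the essential intermediate object, and why (ii) must be in hand before (i) can be established.
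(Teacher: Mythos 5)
Your proof is correct, but the logical organization differs from the paper's. The paper proves (i) and (ii) in one sweep: it first shows that if $\la_-=\la_+=\la$, then solvability of $\ZZ(\beta,a,0)$ for every $a\in R(A)+R(B)$ forces $R(A+\la B)=R(A)+R(B)$, whence $N(A+\la B)=\big(R(A)+R(B)\big)^\bot=N(A)\cap N(B)=\set{0}$; it then uses Proposition \ref{prop_pivote} to pass to the homogeneous problem $\ZZ(\nu,0,0)$ with $\nu\neq0$, which by Theorem \ref{teo_derivada} produces a nonzero vector $y\in N(A+\la B)$ with $\PI{By}{y}=\nu$ -- contradicting the trivial kernel, hence $\la_-<\la_+$, and Proposition \ref{nucleo del interior} then pins $y$ to the appropriate endpoint kernel, giving (ii). You instead harvest the endpoint kernel vectors first, from data in $\mc{A}$ via Proposition \ref{prop_caso_trivial} (which is the same machinery -- its proof is exactly the reduction to $\ZZ(\nu,0,0)$), and only then derive strictness from the single witness $x_+$: the datum $a=\la_0 Bx_+\in R(B)$ cannot lie in $R(A+\la_0 B)$ because $\PI{a}{x_+}\neq0$ while $R(T)\bot N(T)$. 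Both arguments ultimately rest on that same orthogonality of range and kernel of the selfadjoint pencil; the paper applies it globally (solvability for all $a$ forces a range identity incompatible with a nontrivial kernel), you apply it pointwise (one explicit unsolvable datum). Your version has the minor advantage that the strictness step does not invoke the standing assumption $N(A)\cap N(B)=\set{0}$, since your kernel vector has nonzero $B$-form and is therefore automatically outside $N(A)\cap N(B)$; the paper's version is more economical, deriving both items simultaneously from the homogeneous problem. Your closing remark is also accurate: under $\la_-=\la_+$ the form $\PI{By}{y}$ takes every real value on $N(T)$ once it takes one nonzero value (by scaling and connectedness of the unit sphere of $N(T)$), so no contradiction can come from the constraint equation alone -- in both proofs the contradiction is a range obstruction.
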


\begin{proof}
By Theorem \ref{teo_derivada}, $I_\geq(A,B)\neq\varnothing$. If $\la_-=\la_+=\la$, then $N(A+\la B)=\set{0}$. In fact,
given $a\in R(A)+R(B)$ there exists $x\in\HH$ satisfying $g(x)=\beta$ and
\[
(A+\lambda B)x=-a,
\]
because $\ZZ(\beta,a,0)\neq\varnothing$.
Since $a$ is arbitrary, $R(A+\lambda B)=R(A)+R(B)$, and thus $N(A+\la B)=\set{0}$.

Now consider $x_0\in\HH$ such that $\PI{Bx_0}{x_0}>-\beta$. Since $\ZZ(\beta,a,b)\neq\varnothing$ for every $a\in R(A)+R(B)$ and every $b\in R(B)$, by item {\it ii)}
of Proposition \ref{prop_pivote} we have that $\ZZ(\nu,0,0)\neq\varnothing$, with $\nu=\PI{Bx_0}{x_0}+\beta$. Then, there exist
$\la\in I_\geq(A,B)=[\la_-,\la_+]$ and $y \in\HH$ satisfying
\[
(A+\la B)y=0\qquad\text{and}\qquad \PI{By}{y}=\nu>0.
\]
Hence, $y\in N(A+\la B)\cap \mc{P}^+(B)$. By Proposition \ref{nucleo del interior},
this can only happen if $\la=\la_-\neq\la_+$. Then $y\in N(A+\la_-B)$.

Taking a vector $x_0\in\HH$ such that $\PI{Bx_0}{x_0}<-\beta$ and following the same procedure we get that $N(A+\la_+B)\neq\set{0}$.
\end{proof}

\begin{hyp2}\label{hip2_0} From now on, we assume that
\[
I_\geq(A,B)=[\la_-,\la_+]\qquad\text{with $\la_-<\la_+$.}
\]
\end{hyp2}

Fixing $\rho:=\tfrac{\la_-+\la_+}{2}$, consider the operator $W$, the subspace $\mc{R}$ defined in \eqref{eq:def_W}, and 
the selfadjoint operator $G\in\mc{L}(\HH)$ given in Lemma \ref{lema_G_acotado}.
If $\kappa:=\tfrac{\la_+-\la_-}{2}$, then $I_\geq(I,G)=[\la_--\rho,\la_+-\rho]=[-\kappa,\kappa]$.
Also, by Corollary \ref{cor:lambdas_distintos_0}, it follows that
\[
R(A)+R(B)\subseteq\mc{R}.
\]
The next lemma shows that \eqref{eq_sistema_con_c} can be simplified by expressing it in an equivalent way.

\begin{lema}\label{lema_nueva_normal} Given $\nu\in\RR$ and $c\in R(A)+R(B)$, define the vector $u_0:=W^{-1}c$.
Then, there exists $(\la,x)\in[\lambda_-,\lambda_+]\x\HH$ satisfying
\[
\begin{cases}
\,(A+\la B)x = c,\\
\,\PI{Bx}{x}=\nu,
\end{cases}
\]
if and only if there exists $(\gamma,y)\in [-\kappa,\kappa]\x\mc{R}$ satisfying
\begin{equation}\label{eq:nuevo_sistema}
\begin{cases}
\,(I+\gamma G)y=u_0,\\
\,\PI{Gy}{y}=\nu.
\end{cases}
\end{equation}
Therefore, given $\beta\in\RR$, $a\in R(A)+R(B)$ and $b\in R(B)$,
$\ZZ(\beta,a,b)\neq\varnothing$ if and only if there exists $(\gamma,y)\in[-\kappa,\kappa]\x\mc{R}$ satisfying \eqref{eq:nuevo_sistema} with
$u_0=W^{-1}c$, where $c$ and $\nu$ are given by \eqref{eq:u0}.
\end{lema}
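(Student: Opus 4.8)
The plan is to transport both the linear equation and the quadratic constraint through the factorization \eqref{eq:pencil_alternativo} together with the identity $B = WGW$ from Lemma \ref{lema_G_acotado}, exploiting that $W$ is injective with dense range $\mc{R}$. First I would observe that $u_0 = W^{-1}c$ is well defined: by Corollary \ref{cor:lambdas_distintos_0} we have $c \in R(A) + R(B) \subseteq \mc{R} = R(W)$, and injectivity of $W$ makes the preimage unique. I would also record the elementary parameter correspondence: setting $\gamma = \la - \rho$, the interval $\la \in [\la_-,\la_+]$ is carried bijectively onto $\gamma \in [\la_- - \rho, \la_+ - \rho] = [-\kappa,\kappa]$, since $\rho = \tfrac{\la_- + \la_+}{2}$ and $\kappa = \tfrac{\la_+ - \la_-}{2}$.

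For the forward implication, given $(\la,x)$ solving the first system, I would set $\gamma := \la - \rho \in [-\kappa,\kappa]$ and $y := Wx \in \mc{R}$. Substituting \eqref{eq:pencil_alternativo} into $(A + \la B)x = c = Wu_0$ gives $W(I + \gamma G)Wx = Wu_0$, and cancelling the outer $W$ by injectivity yields $(I + \gamma G)y = u_0$. For the constraint, $B = WGW$ and selfadjointness of $W$ give $\PI{Bx}{x} = \PI{WGWx}{x} = \PI{G(Wx)}{Wx} = \PI{Gy}{y}$, so the value $\nu$ is preserved. The converse runs the same computation backwards: given $(\gamma,y)$ with $y \in \mc{R}$, put $\la := \gamma + \rho \in [\la_-,\la_+]$ and $x := W^{-1}y$, which exists because $y \in R(W)$ and $W$ is injective; then $(A + \la B)x = W(I + \gamma G)y = Wu_0 = c$ and $\PI{Bx}{x} = \PI{Gy}{y} = \nu$.

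The final characterization of $\ZZ(\beta,a,b)$ then follows by chaining this equivalence with Remark \ref{obs_pivote}. Choosing $x_1,x_2,x_3$ with $a = A(x_1 + x_3) + Bx_2$ and $b = Bx_3$, that remark identifies $\ZZ(\beta,a,b) \neq \varnothing$ with solvability of \eqref{eq_sistema_con_c} for the data $c = -(Ax_1 + Bx_2) \in R(A) + R(B)$ and $\nu = \PI{Bx_3}{x_3} + \beta$ prescribed by \eqref{eq:u0}; the equivalence just established rephrases this as solvability of \eqref{eq:nuevo_sistema} with $u_0 = W^{-1}c$.

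Since the factorization does all the heavy lifting, the computation is routine and the only points demanding care are bookkeeping: checking that $y = Wx$ lands exactly in the dense subspace $\mc{R}$ (and conversely that $x = W^{-1}y$ is legitimate because the hypothesis forces $y \in \mc{R}$), and that the affine reparametrization $\gamma = \la - \rho$ matches the two intervals endpoint to endpoint. These are precisely the places where Hypothesis \ref{hip2_0}, the injectivity of $W$, and the inclusion $R(A) + R(B) \subseteq \mc{R}$ enter.
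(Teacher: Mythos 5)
Your proposal is correct and follows essentially the same route as the paper: both arguments transport the system through the factorization $A+\la B=W\big(I+(\la-\rho)G\big)W$ and $B=WGW$, set $y=Wx$, $\gamma=\la-\rho$, use injectivity of $W$ and the inclusion $R(A)+R(B)\subseteq\mc{R}$, and conclude the final characterization via Remark \ref{obs_pivote}. Your write-up merely makes explicit some bookkeeping (well-definedness of $u_0$, the legitimacy of $x=W^{-1}y$ in the converse) that the paper leaves implicit.
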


\begin{proof}
Let $x\in\HH$ and $\lambda\in[\lambda_-,\lambda_+]$. The equation $(A+\lambda B)x=c$
is satisfied if and only if
\[
W\big(I+(\lambda-\rho)G\big)Wx=c,
\]
see \eqref{eq:pencil_alternativo}.
If $y:=Wx$ and $\gamma:=\lambda-\rho\in[-\kappa,\kappa]$, this last equation is equivalent to
\[
(I+\gamma G)y=u_0.
\]
Also, $\PI{Gy}{y}=\PI{Bx}{x}=\PI{WGWx}{x}=\PI{Bx}{x}=\nu$, completing the proof.

Finally, fix $\beta\in\RR$, take $a\in R(A)+R(B)$ and $b\in R(B)$, and let $x_1,x_2,x_3\in\HH$ be such that $b=Bx_3$ and $a=A(x_1+x_3)+Bx_2$.
Defining $u_0:=-W^{-1}(Ax_1+Bx_2)$ and $\nu:=\PI{Bx_3}{x_3}+\beta$, using Remark \ref{obs_pivote} we get that
$\ZZ(\beta,a,b)\neq\varnothing$ if and only if there exists $(\gamma,y)\in[-\kappa,\kappa]\x\mc{R}$ satisfying \eqref{eq:nuevo_sistema}.
\end{proof}




\begin{prop}\label{prop_unicidad_2} 
Given $\nu\in\RR$ and $u_0\in\HH$, assume that there exists $(\gamma_1, y_1)\in [-\kappa,\kappa ]\x \HH$ satisfying \eqref{eq:nuevo_sistema}.
If $u_0\notin N(G)$ and $(\gamma_2,y_2)\in[-\kappa,\kappa]\x\HH$ also satisfies \eqref{eq:nuevo_sistema}, then $\gamma_1=\gamma_2$.
\end{prop}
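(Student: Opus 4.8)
The plan is to argue by contradiction. Suppose both $(\gamma_1,y_1)$ and $(\gamma_2,y_2)$ satisfy \eqref{eq:nuevo_sistema} but $\gamma_1\neq\gamma_2$, and set $M_i:=I+\gamma_i G$ for $i=1,2$. Since $\gamma_1,\gamma_2\in[-\kappa,\kappa]=I_\geq(I,G)$, both $M_1$ and $M_2$ are positive semidefinite; this is essentially the only structural input, and---importantly---it remains valid when a $\gamma_i$ sits at an endpoint $\pm\kappa$, where $M_i$ need not be invertible. For that reason I would avoid inverting $I+\gamma_i G$ and work instead with the quadratic forms $\PI{M_i\,\cdot}{\cdot}$, so that the endpoint cases require no separate treatment.

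First I would record the scalar relations coming from pairing the equations $M_iy_i=u_0$ against the $y_j$. Testing $M_iy_i=u_0$ against $y_i$ and using $\PI{Gy_i}{y_i}=\nu$ gives $\PI{u_0}{y_i}=\|y_i\|^2+\gamma_i\nu$, a real number; and since $M_i$ is selfadjoint, $\PI{M_iy_j}{y_i}=\PI{y_j}{M_iy_i}=\PI{y_j}{u_0}$. The main step is then to expand the two nonnegative quantities $\PI{M_i(y_1-y_2)}{y_1-y_2}\geq 0$, $i=1,2$, inserting these identities for the diagonal and cross terms. A direct (if slightly delicate) computation yields
\[
\PI{M_2(y_1-y_2)}{y_1-y_2}=\|y_2\|^2-\|y_1\|^2+2(\gamma_2-\gamma_1)\nu,
\]
together with the expression obtained by interchanging the indices $1$ and $2$ for $M_1$.

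The decisive observation is that these two nonnegative numbers are negatives of one another, so their sum is $0$ and each vanishes. Positivity of $M_i$ then forces $M_i(y_1-y_2)=0$ for both $i$, via $\PI{M_iv}{v}=\|M_i^{1/2}v\|^2$. Subtracting the two equalities gives $(\gamma_2-\gamma_1)\,G(y_1-y_2)=0$, and since $\gamma_1\neq\gamma_2$ we get $G(y_1-y_2)=0$; feeding this back into $M_1(y_1-y_2)=0$ yields $y_1-y_2=0$, i.e.\ $y_1=y_2$. Finally, substituting $y_1=y_2$ into $M_1y_1=u_0=M_2y_2$ gives $(\gamma_1-\gamma_2)Gy_1=0$, hence $Gy_1=0$ and $u_0=y_1+\gamma_1Gy_1=y_1\in N(G)$, contradicting $u_0\notin N(G)$. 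Therefore $\gamma_1=\gamma_2$.

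The one genuine obstacle is the complex-inner-product bookkeeping in the expansion---checking that the cross terms $\PI{M_2y_1}{y_2}$ and $\PI{M_2y_2}{y_1}$ are both real and equal to $\|y_1\|^2+\gamma_1\nu$---and then noticing that the two inequalities cancel. Everything after that is forced. One could alternatively invert $M_i$ on the open interval and show that $\gamma\mapsto\PI{G(I+\gamma G)^{-1}u_0}{(I+\gamma G)^{-1}u_0}$ is strictly monotone, but that route needs $I+\gamma G$ invertible and so does not reach the endpoints $\gamma=\pm\kappa$; the semidefiniteness argument above handles all cases at once.
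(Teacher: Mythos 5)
Your proof is correct, and it takes a genuinely different (and more streamlined) route than the paper's. Both arguments start from the same scalar relations, namely $\PI{u_0}{y_i}=\|y_i\|^2+\gamma_i\nu$ and the cross pairings coming from selfadjointness, and both rely only on positive semidefiniteness of $I+\gamma_i G$ on all of $[-\kappa,\kappa]$, so neither needs invertibility at the endpoints. From there the paper argues via Cauchy--Schwarz: it introduces the quantities $a_{ij}=\|(I+\gamma_j G)^{1/2}y_i\|$, adds two Cauchy--Schwarz inequalities to force $a_{11}=a_{21}$ and $a_{22}=a_{12}$, hence $\|y_1\|=\|y_2\|$; its key identity $(\gamma_1-\gamma_2)\big(\PI{Gy_1}{y_2}+\nu\big)=\|y_2\|^2-\|y_1\|^2$ then leaves the alternative $\gamma_1=\gamma_2$ or $\PI{Gy_1}{y_2}=-\nu$, and the latter case is ruled out by an equality-in-Cauchy--Schwarz (collinearity) argument with the positive definite operator $C=I+\tfrac{\gamma_1+\gamma_2}{2}G$, ending exactly as you do: $y_1=y_2$, then $(\gamma_1-\gamma_2)Gy_1=0$, then a contradiction with $u_0\notin N(G)$. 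Your argument compresses all of this into a single observation: the two nonnegative numbers $\PI{(I+\gamma_i G)(y_1-y_2)}{y_1-y_2}$, $i=1,2$, are negatives of one another --- your expansion $\PI{(I+\gamma_2 G)(y_1-y_2)}{y_1-y_2}=\|y_2\|^2-\|y_1\|^2+2(\gamma_2-\gamma_1)\nu$ checks out, and the index-swapped identity follows by symmetry --- hence both vanish, and semidefiniteness upgrades this to the operator equations $(I+\gamma_i G)(y_1-y_2)=0$ for both $i$. That conclusion is strictly stronger than the paper's intermediate step $\|y_1\|=\|y_2\|$, and it makes the rest purely algebraic: subtracting gives $G(y_1-y_2)=0$, hence $y_1=y_2$, then $Gy_1=0$ and $u_0=y_1\in N(G)$, a contradiction. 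What your route buys is a shorter proof with no case analysis, no equality case of Cauchy--Schwarz, no collinearity step, and no appeal to definiteness of the pencil in the interior of the interval; the paper's version instead follows the same Cauchy--Schwarz template as the uniqueness result of Proposition \ref{prop_unicidad_lambda} proved in the Appendix, which explains its organization but costs it the extra degenerate case.
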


\begin{proof}
Let $u_0\notin N(G)$, and assume there exist $(\gamma_1,y_1),(\gamma_2,y_2)\in[-\kappa,\kappa]\x\HH$ satisfying
$\PI{Gy_1}{y_1}=\PI{Gy_2}{y_2}=\nu$ and
\begin{align}\label{eq:sistemax}
(I+\gamma_1 G)y_1&=u_0,\nonumber\\
(I+\gamma_2 G)y_2&=u_0.
\end{align}
On the one hand, since $\PI{Gy_i}{y_i}=\nu$ and $I+\gamma_iG$ is positive semidefinite for $i=1,2$, $\PI{u_0}{y_i}=\|y_i\|^2+\gamma_i\,\nu\geq0$. On the other hand,
\begin{align}\label{eq:sistemax_2}
\|y_1\|^2+\gamma_1\,\nu=\PI{u_0}{y_1}=\PI{(I+\gamma_2 G)y_2}{y_1}=\PI{y_2}{y_1}+\gamma_2\PI{Gy_2}{y_1},\nonumber\\
\|y_2\|^2+\gamma_2\,\nu=\PI{u_0}{y_2}=\PI{(I+\gamma_1 G)y_1}{y_2}=\PI{y_1}{y_2}+\gamma_1\PI{Gy_1}{y_2},
\end{align}
This implies that
\begin{equation}\label{eq:gammas}
(\gamma_1-\gamma_2)\big(\PI{Gy_1}{y_2}+\nu\big)=\|y_2\|^2-\|y_1\|^2.
\end{equation}
Next, we see that $\|y_1\|=\|y_2\|$. Consider $a_{ij}:=\|(I+\gamma_j G)^{1/2}y_i\|$ for $i,j=1,2$. By the Cauchy-Schwarz inequality, and the fact that
$\|(I+\gamma_j G)^{1/2}y_i\|^2=\|y_i\|^2+\gamma_j\nu$
it follows that
\begin{align*}
a_{11}^2&=|\PI{u_0}{y_1}|=|\PI{(I+\gamma_2 G)y_2}{y_1}|\leq(\|y_2\|^2+\gamma_2\,\nu)^{1/2}(\|y_1\|^2+\gamma_2\,\nu)^{1/2}=a_{22}a_{12},\\
a_{22}^2&=|\PI{u_0}{y_2}|=|\PI{(I+\gamma_1 G)y_1}{y_2}|\leq(\|y_1\|^2+\gamma_1\,\nu)^{1/2}(\|y_2\|^2+\gamma_1\,\nu)^{1/2}=a_{11}a_{21}.
\end{align*}
By adding these inequalities, we get $a_{11}^2+a_{22}^2\leq a_{22}a_{12}+a_{11}a_{21}$, or equivalently,
\[
(a_{11}-a_{21})^2+(a_{22}-a_{12})^2\leq a_{21}^2+a_{12}^2-a_{11}^2-a_{22}^2=0.
\]
Then $a_{11}=a_{21}$ and $a_{22}=a_{12}$, i.e. $\|y_1\|=\|y_2\|$. By \eqref{eq:gammas}, this implies that $\gamma_1=\gamma_2$ or $\PI{Gy_1}{y_2}=-\nu$.
However, if $\PI{Gy_1}{y_2}=-\nu$ then \eqref{eq:sistemax_2} says that
\[
\PI{y_1}{y_2}=\|y_1\|^2+(\gamma_1+\gamma_2)\nu,
\]
and consequently, if $C=I+\tfrac{\gamma_1+\gamma_2}{2}G$ then
\begin{align*}
\PI{C^{1/2}y_1}{C^{1/2}y_2}&=\PI{Cy_1}{y_2}=\PI{y_1}{y_2}-\tfrac{\gamma_1+\gamma_2}{2}\nu=\|y_1\|^2+\tfrac{\gamma_1+\gamma_2}{2}\nu\\
&=(\|y_1\|^2+\tfrac{\gamma_1+\gamma_2}{2}\nu)^{1/2}(\|y_2\|^2+\tfrac{\gamma_1+\gamma_2}{2}\nu)^{1/2}\\
&=\|C^{1/2}y_1\|\,\|C^{1/2}y_2\|.
\end{align*}
Since $C$ is positive definite, then $y_1$ and $y_2$ are collinear and from \eqref{eq:sistemax_2} it is easy to see that
$y_1=y_2$, which by \eqref{eq:sistemax} implies that
$(\gamma_1-\gamma_2)Gy_1=0$. But $y_1\notin N(G)$ because $u_0\notin N(G)$, and hence $\gamma_1=\gamma_2$.
\end{proof}

The existence of solutions to Problem \ref{pb 2} for every $a\in R(A)+R(B)$ and every $b\in R(B)$ implies a range additivity condition for the
ranges of the pencil $A+\la B$.

\begin{prop}\label{prop_rangos_iguales}
Given $\beta\in\RR$, assume that $\ZZ(\beta,a,b)\neq\varnothing$ for every $a\in R(A)+R(B)$ and every $b\in R(B)$. Then,
\begin{equation}\label{eq:rangos_iguales_2}
R(A+\la B)=R(A)+R(B)\quad\quad\text{for every $\la\in(\la_-,\la_+)$.}
\end{equation}
\end{prop}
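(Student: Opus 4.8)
The plan is to push the hypothesis through the reduction machinery of this section so that it becomes a statement about the auxiliary pencil $I+\gamma G$ on the dense subspace $\mc{R}$, and then to use the uniqueness of the multiplier (Proposition \ref{prop_unicidad_2}) to certify that the interior parameter is actually realized by an admissible vector. First I would rewrite the hypothesis. Since ``$\ZZ(\beta,a,b)\neq\varnothing$ for every $a\in R(A)+R(B)$ and every $b\in R(B)$'' is exactly condition i) of Proposition \ref{prop_pivote}, its equivalent form ii) together with Lemma \ref{lema_nueva_normal} shows: for every $u_0\in\mc{R}+G(\mc{R})$ and every $\nu\in\RR$, the system \eqref{eq:nuevo_sistema} admits a solution $(\gamma,y)\in[-\kappa,\kappa]\x\mc{R}$. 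Here I use that $u_0=W^{-1}c$ ranges over $\mc{R}+G(\mc{R})$ as $c$ ranges over $R(A)+R(B)=W(\mc{R}+G(\mc{R}))$, because $W$ is injective.

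By Proposition \ref{prop:rango_invariante} ii), for a fixed $\la\in(\la_-,\la_+)$, equivalently $\gamma_0=\la-\rho\in(-\kappa,\kappa)$, the desired identity $R(A+\la B)=R(A)+R(B)$ is equivalent to $(I+\gamma_0 G)(\mc{R})=\mc{R}+G(\mc{R})$; the inclusion $\subseteq$ being automatic, the task reduces to proving $\mc{R}+G(\mc{R})\subseteq(I+\gamma_0 G)(\mc{R})$. Fix $u_0\in\mc{R}+G(\mc{R})$ with $u_0\notin N(G)$. As $\gamma_0$ is interior, $I+\gamma_0 G$ is invertible, so I would set $v_0:=(I+\gamma_0 G)^{-1}u_0$ and $\nu_0:=\PI{Gv_0}{v_0}$, noting that $(\gamma_0,v_0)$ solves \eqref{eq:nuevo_sistema} for this $u_0,\nu_0$. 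Feeding precisely this $\nu_0$ into the rewritten hypothesis produces a second solution $(\gamma_1,y_1)\in[-\kappa,\kappa]\x\mc{R}$. Since $u_0\notin N(G)$, Proposition \ref{prop_unicidad_2} forces $\gamma_1=\gamma_0$, and invertibility of $I+\gamma_0 G$ then gives $y_1=v_0$; as $y_1\in\mc{R}$, this yields $v_0\in\mc{R}$, i.e.\ $u_0\in(I+\gamma_0 G)(\mc{R})$.

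It remains to upgrade this from vectors outside $N(G)$ to all of $\mc{R}+G(\mc{R})$. The key observation is that $(I+\gamma_0 G)(\mc{R})$ is a subspace and that the subspace $\mc{R}+G(\mc{R})$ is spanned by its elements lying outside $N(G)$, a fact that holds as soon as $\mc{R}+G(\mc{R})\not\subseteq N(G)$. This last condition is guaranteed because $G\neq0$ (otherwise $B=WGW=0$, contradicting that $B$ is indefinite) and $\mc{R}$ is dense, so $\mc{R}\not\subseteq N(G)$. By linearity the inclusion of the previous paragraph then extends to all of $\mc{R}+G(\mc{R})$, and we conclude $(I+\gamma_0 G)(\mc{R})=\mc{R}+G(\mc{R})$, hence $R(A+\la B)=R(A)+R(B)$ for every $\la\in(\la_-,\la_+)$.

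The main obstacle, and the only place where the argument genuinely uses the hypothesis rather than soft structural facts, is controlling which multiplier the existence statement returns: the hypothesis only yields some $\gamma\in[-\kappa,\kappa]$, whereas one needs the prescribed interior value $\gamma_0$. The device overcoming this is to input exactly the constraint level $\nu_0=\PI{Gv_0}{v_0}$ attained by the a priori non-admissible candidate $v_0=(I+\gamma_0 G)^{-1}u_0$, so that uniqueness of the multiplier collapses the returned $\gamma_1$ onto $\gamma_0$ and thereby certifies $v_0\in\mc{R}$. The passage through $N(G)$ via a spanning argument is the secondary technical point, handled cleanly once one notes $G\neq0$.
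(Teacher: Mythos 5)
Your proposal is correct and follows essentially the same route as the paper's own proof: reduce to the auxiliary pencil via Proposition~\ref{prop:rango_invariante} and Lemma~\ref{lema_nueva_normal}, plant a candidate solution at the prescribed interior parameter $\gamma_0$ with the constraint level $\nu_0$ that this candidate attains, and invoke the uniqueness of the multiplier (Proposition~\ref{prop_unicidad_2}) to force the solution supplied by the hypothesis to have parameter $\gamma_0$, hence to certify membership in $(I+\gamma_0 G)(\mc{R})$. The only differences are cosmetic: the paper proves the inclusion $G(\mc{R})\subseteq(I+\tau G)(\mc{R})$ (so the $N(G)$ case is trivial) and builds the candidate explicitly through the canonical decomposition of $G$, whereas you treat general $u_0\in\mc{R}+G(\mc{R})$ using the abstract inverse $(I+\gamma_0 G)^{-1}$ and dispose of the $N(G)$ case by a spanning argument.
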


\begin{proof}
By Proposition \ref{prop:rango_invariante}, it is sufficient to prove that
\[
G(\mc{R})\subseteq (I+\tau G)(\mc{R})\quad\quad\text{for every $\tau\in(-\kappa,\kappa)$.}
\]
Fix $x_0\in\mc{R}$. If $x_0\in N(G)$, then $Gx_0=0\in(I+\tau G)(\mc{R})$ for every $\tau\in(-\kappa,\kappa)$. Now suppose that $x_0\notin N(G)$ and
fix $\tau_0\in(-\kappa,\kappa)$. Write $x_0=x_0^++x_0^-+x_0^0$ with $x_0^\pm\in\HH_\pm$ and $x_0^0\in N(G)$.
Consider the canonical decomposition of $G$ with respect to \eqref{eq:descomposicion}, and the real constant $\nu$ defined by
\[
\nu:=\|G_+^{1/2}(I_++\tau_0 G_+)^{-1}G_+ x_0^+\|^2-\|G_-^{1/2}(I_--\tau_0 G_-)^{-1}G_- x_0^-\|^2,
\]
where the operators $I_++\tau_0 G_+$ and $I_--\tau_0 G_-$ are invertible because $I+\tau_0 G$ is invertible by \eqref{eq:pencil_alternativo}.
On the one hand, setting
\[
z_0:=(I_++\tau_0 G_+)^{-1}G_+x_0^++(I_--\tau_0 G_-)^{-1}G_-x_0^-
\]
yields
\[
\begin{cases}
\,(I+\tau_0 G)z_0=Gx_0,\\
\,\PI{Gz_0}{z_0}=\nu.
\end{cases}
\]
On the other hand, by Lemma \ref{lema_nueva_normal}, there exists $(\gamma,y)\in [-\kappa,\kappa]\x\mc{R}$ satisfying
\begin{equation}\label{eq:un_sistema}
\begin{cases}
\,(I+\gamma G)y=Gx_0,\\
\,\PI{Gy}{y}=\nu.
\end{cases}
\end{equation}
Hence, both pairs $(\tau_0,z_0)$ and $(\gamma,y)$ are solutions to the system
\begin{equation}\label{eq:sistema_auxiliar}
\begin{cases}
\,(I+\tau G)z=Gx_0,\\
\,\PI{Gz}{z}=\nu.
\end{cases}
\end{equation}
Also, $Gx_0\notin N(G)$ because $N(G^2)=N(G)$ and $x_0\notin N(G)$.
By Proposition \ref{prop_unicidad_2},
this implies that $\gamma=\tau_0$, and thus
\[
Gx_0=(I+\tau_0G)y\in (I+\tau_0 G)(\mc{R}).
\]
Since $x_0\in\mc{R}$ and $\tau_0\in(-\kappa,\kappa)$ were arbitrary, the proof is completed.
\end{proof}

The next proposition states the necessary range additivity condition for $\la=\la_-$ and $\la=\la_+$.
Let $D_\pm\in\mc{L}(\HH)$ be the solution to $(A+\la_\pm B)^{1/2}=WX$.

\begin{prop}\label{prop_necesaria_extremos}
Given $\beta\in\RR$, assume that $\ZZ(\beta,a,b)\neq\varnothing$ for every $a\in R(A)+R(B)$ and every $b\in R(B)$. Then,
\[
R(A+\la_\pm B)=\big(R(A)+R(B)\big)\cap R(WD_\pm D_\pm^*).
\]
\end{prop}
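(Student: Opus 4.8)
The plan is to translate the claimed range identity into an equivalent statement about the auxiliary pencil $I+\gamma G$ on the dense subspace $\mc{R}$, and then to establish the latter using the existence hypothesis together with the uniqueness of the Lagrange parameter. First I would record the consequence of the hypothesis furnished by Proposition \ref{prop_rangos_iguales}: since $\la_-<\la_+$ forces $\rho=\tfrac{\la_-+\la_+}{2}\in(\la_-,\la_+)$ and $A+\rho B=W^2$, we obtain $R(A)+R(B)=R(A+\rho B)=R(W^2)$. Consequently the claimed equality $R(A+\la_\pm B)=\big(R(A)+R(B)\big)\cap R(WD_\pm D_\pm^*)$ is the same as $R(A+\la_\pm B)=R(A+\rho B)\cap R(WD_\pm D_\pm^*)$, which by Lemma \ref{lema_condiciones_rangos} (applied with $\la=\la_\pm$, so that $\la-\rho=\pm\kappa$ and $D=D_\pm$) is equivalent to
\[
(I\pm\kappa G)(\mc{R})=\mc{R}\cap R(I\pm\kappa G).
\]
Thus the whole proposition reduces to proving this single identity.

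For the inclusion $\subseteq$ I would first check that $G(\mc{R})\subseteq\mc{R}$: for $x\in\mc{R}$ one has $Gx=W^{-1}BW^{-1}x$ by Lemma \ref{lema_G_acotado}, and $W(Gx)=BW^{-1}x\in R(B)\subseteq R(W^2)$, so by injectivity of $W$ we get $Gx\in R(W)=\mc{R}$. Hence $(I\pm\kappa G)(\mc{R})\subseteq\mc{R}$, and since it is trivially contained in $R(I\pm\kappa G)$, this direction follows.

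The substantial direction is $\supseteq$, where the existence hypothesis enters. Restating the hypothesis through Lemma \ref{lema_nueva_normal} and Remark \ref{obs_pivote}, and using that $u_0=W^{-1}c$ ranges over all of $\mc{R}$ as $c$ ranges over $R(A)+R(B)=R(W^2)$ while $\nu$ ranges over all of $\RR$ (these two choices being independent, the latter because $B$ is indefinite), the hypothesis is equivalent to the assertion that for every $u_0\in\mc{R}$ and every $\nu\in\RR$ the system \eqref{eq:nuevo_sistema} admits a solution $(\gamma,y)\in[-\kappa,\kappa]\x\mc{R}$. Now I would fix $v\in\mc{R}\cap R(I\pm\kappa G)$. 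If $v\in N(G)$, then $(I\pm\kappa G)v=v$ with $v\in\mc{R}$, so $v\in(I\pm\kappa G)(\mc{R})$ directly. If $v\notin N(G)$, pick $y_0\in\HH$ with $(I\pm\kappa G)y_0=v$ and set $\nu:=\PI{Gy_0}{y_0}$, so that $(\pm\kappa,y_0)$ solves \eqref{eq:nuevo_sistema} for $u_0=v$. Applying the reformulated hypothesis with this same $u_0=v$ and $\nu$ yields a second solution $(\gamma,y)$ with $y\in\mc{R}$, and since $v=u_0\notin N(G)$, Proposition \ref{prop_unicidad_2} forces $\gamma=\pm\kappa$. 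Therefore $v=(I\pm\kappa G)y\in(I\pm\kappa G)(\mc{R})$, which completes the inclusion.

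The step I expect to be the main obstacle is precisely this $\supseteq$ inclusion at the boundary: one must produce a preimage of $v$ that both lies in $\mc{R}$ and carries the boundary parameter $\pm\kappa$, whereas the only leverage available, the existence hypothesis, a priori guarantees merely \emph{some} parameter $\gamma\in[-\kappa,\kappa]$. Pinning $\gamma$ down to the endpoint is exactly where the uniqueness result Proposition \ref{prop_unicidad_2} becomes indispensable, and this is also why the degenerate case $v\in N(G)$, in which uniqueness is unavailable, has to be dispatched separately by the direct computation above.
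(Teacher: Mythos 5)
Your proof is correct, and while its opening reduction coincides with the paper's, the core of the argument is genuinely different. The paper also begins by invoking Proposition \ref{prop_rangos_iguales} to get $R(A)+R(B)=R(A+\rho B)=R(W^2)$ and Lemma \ref{lema_condiciones_rangos} to restate the claim as $(I\pm\kappa G)(\mc{R})=\mc{R}\cap R(I\pm\kappa G)$, and it settles the inclusion $(I\pm\kappa G)(\mc{R})\subseteq\mc{R}\cap R(I\pm\kappa G)$ just as you do. For the reverse inclusion, however, the paper never invokes Proposition \ref{prop_unicidad_2}: it decomposes $x_0\in\mc{R}\cap R(I-\kappa G)$ along the spectral decomposition of $G$, introduces the functions $g_+(\tau)=\|G_+^{1/2}(I_++\tau G_+)^{-1}x_0^+\|^2$ and $g_-(\tau)=\|G_-^{1/2}(I_--\tau G_-)^{-1}x_0^-\|^2$, uses their monotonicity and continuity up to the endpoint (via a pseudo-inverse, available precisely because $x_0^+\in R(I_+-\kappa G_+)$) to choose $\nu$ so large that $g_+(\tau)<g_-(\tau)+\nu$ throughout $[-\kappa,\kappa]$, and then shows by contradiction that the solution $(\gamma,y)$ furnished by Lemma \ref{lema_nueva_normal} can have neither $\gamma\in(-\kappa,\kappa)$ nor $\gamma=\kappa$, forcing $\gamma=-\kappa$. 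Your argument replaces all of this with the observation that $v\in R(I\pm\kappa G)$ already supplies an explicit solution $(\pm\kappa,y_0)$ of system \eqref{eq:nuevo_sistema} with $\nu=\PI{Gy_0}{y_0}$, so the parameter of the $\mc{R}$-solution provided by the hypothesis is pinned to $\pm\kappa$ by Proposition \ref{prop_unicidad_2}; this is exactly the mechanism the paper itself uses in the proof of Proposition \ref{prop_rangos_iguales} for interior parameters, transported to the endpoints, where it works because membership in $R(I\pm\kappa G)$ is precisely what guarantees the explicit endpoint solution exists. As for what each approach buys: yours is shorter, needs no functional calculus, no continuity/monotonicity analysis and no pseudo-inverses, at the cost of the extra (correctly handled) case $v\in N(G)$, where uniqueness is unavailable; the paper's heavier route exhibits the explicit structure of boundary solutions (a pseudo-inverse term plus a component in the kernel $N(I_--\kappa G_-)$), information of the kind that reappears in the sufficiency part of Theorem \ref{teo_necesarias_suficientes}.
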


\begin{proof}
We prove that $(I-\kappa G)(\mc{R})=\mc{R}\cap R(I-\kappa G)$, then the statement for $\la_-$ follows from Lemma
\ref{lema_condiciones_rangos}. A similar argument can be used to prove the other equality.

By Proposition \ref{prop_rangos_iguales}, $R(W^2)=R(A)+R(B)$, and by Proposition \ref{prop:rango_invariante}, $G(\mc{R})\subseteq\mc{R}$. Then, the inclusion
\[
(I- \kappa G)(\mc{R})\subseteq \mc{R}\cap R(I-\kappa G)
\]
is trivial. To see the other inclusion, take $x_0\in \mc{R}\cap R(I-\kappa G)$, and
write $x_0=x_0^++x_0^-+x_0^0$ with $x_0^\pm\in R(I_\pm\mp\kappa G_\pm)$ and $x_0^0\in N(G)$.
Consider the real valued functions $g_+$ and $g_-$ defined by
\[
g_+(\tau)=\|G_+^{1/2}(I_++\tau G_+)^{-1} x_0^+\|^2,\quad \tau\in(-\kappa,\kappa],
\]
and
\[
g_-(\tau)=\|G_-^{1/2}(I_--\tau G_-)^{-1} x_0^-\|^2,\quad \tau\in[-\kappa,\kappa),
\]
respectively. Since $x_0^+\in R(I_+-\kappa G_+)$, 
$\lim_{\tau\to-\kappa}g_+(\tau)=\|G_+^{1/2}(I_+-\kappa G_+)^{\dag} x_0^+\|^2$, and thus the extension
of $g_+$ is a continuous function on the interval $[-\kappa,\kappa]$. Now, by the functional calculus for selfadjoint operators,
$g_+$ is a continuous monotone decreasing function of $\tau$, and $g_-$ is a continuous monotone increasing
function of $\tau$. Hence, choosing $\nu\in\RR$ such that $g_+(-\kappa)<g_-(-\kappa)+\nu$ we have that
\begin{equation}\label{eq:desigualdad}
\max_{\tau\in[-\kappa,\kappa]}g_+(\tau)=g_+(-\kappa)<g_-(-\kappa)+\nu=\min_{\tau\in[-\kappa,\kappa)}g_-(\tau)+\nu.
\end{equation}
By Lemma \ref{lema_nueva_normal}, there exists $(\gamma,y)\in[-\kappa,\kappa]\x \mc{R}$ satisfying
\begin{equation*}
\begin{cases}
\,(I+\gamma G)y=x_0,\\
\,\PI{Gy}{y}=\nu.
\end{cases}
\end{equation*}
Suppose first that $\gamma\in(-\kappa,\kappa)$. Then,
\[
y=(I_++\gamma G_+)^{-1}x_0^++(I_--\gamma G_-)^{-1}x_0^-+x_0^0,
\]
and
\[
g_+(\gamma)-g_-(\gamma)=\|G_+^{1/2}(I_++\gamma G_+)^{-1} x_0^+\|^2-\|G_-^{1/2}(I_--\gamma G_.)^{-1} x_0^.\|^2=\PI{Gy}{y}=\nu.
\]
But by \eqref{eq:desigualdad} this is a contradiction. Now suppose that $\gamma=\kappa$. Then
$x_0^-\in R(I_--\kappa G_-)$, and
\[
y=(I_++\kappa G_+)^{-1}x_0^++(I_--\kappa G_-)^\dag x_0^-+x_0^0+y_-,
\]
for some $y_-\in N(I_--\kappa G_-)$.
Since $\lim_{\tau\to\kappa}g_-(\tau)=\|G_-^{1/2}(I_--\kappa G_-)^{\dag} x_0^-\|^2$, the extension
of $g_-$ is a continuous function on the interval $[-\kappa,\kappa]$. Considering that $G_-y_-=\tfrac{1}{\kappa}y_-$ and $(I_--\kappa G_-)^\dag x_0^-\bot y_-$,
we have that
\begin{align*}
\nu&=\PI{Gy}{y}=\|G_+^{1/2}(I_++\kappa G_+)^{-1}x_0^+\|^2-\|G_-^{1/2}\big((I_--\kappa G_-)^\dag x_0^-+y_-)\|^2\\
&=g_+(\kappa)-g_-(\kappa)-\tfrac{1}{\kappa}\|y_-\|^2\leq g_+(\kappa)-g_-(\kappa),
\end{align*}
and thus
\[
g_+(\kappa)\geq g_-(\kappa)+\nu.
\]
But by \eqref{eq:desigualdad} this leads to a contradiction. Hence, $\gamma=-\kappa$ and $x_0=(I-\kappa G)y\in (I-\kappa G)(\mc{R})$.
Since $x_0\in\mc{R}\cap R(I-\kappa G)$ was arbitrary, it follows that
\[
\mc{R}\cap R(I-\kappa G)\subseteq (I-\kappa G)(\mc{R}).\qedhere
\]
\end{proof}

Consider the subspaces $\N_+:=N(I-\kappa G)$ and $\N_-:=N(I+\kappa G)$. 
Then,
\begin{equation}\label{eq:nucleos_buenos}
W\big(N(A+\lambda_+ B)\big)=\N_-\cap\mc{R}\qquad\text{and}\qquad W\big(N(A+\la_-B)\big)=\N_+\cap\mc{R}.
\end{equation}

We now establish necessary and sufficient conditions for the system \eqref{eq:nuevo_sistema} to admit a solution for every $u_0\in\HH$ and every $\nu\in\RR$.
The proof of the next proposition uses the same arguments used in the proof of \cite[Prop. 4.8]{GZMMP}, and can be found in the Appendix.

\begin{prop}\label{prop_necesarias_suficientes_normal} For every $u_0\in\HH$ and every $\nu\in\RR$ there exists $(\gamma,y)\in[-\kappa,\kappa]\x\HH$
satisfying
\[
\begin{cases}
\,(I+\gamma G)y=u_0,\\
\,\PI{Gy}{y}=\nu,
\end{cases}
\]
if and only if $\mc{N}_-\neq\set{0}$ and $\mc{N}_+\neq\set{0}$.
\end{prop}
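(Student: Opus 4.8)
The plan is to argue entirely inside the canonical decomposition $\HH=\HH_+\oplus\HH_-\oplus N(G)$ of $G$, writing $G$ as $G_+$ on $\HH_+$, $-G_-$ on $\HH_-$ (with $G_\pm$ injective and positive) and $0$ on $N(G)$. Since $I_\geq(I,G)=[-\kappa,\kappa]$, the operators $I_\pm\mp\kappa G_\pm$ are positive semidefinite, and by the spectral theorem $\N_+=N(I_+-\kappa G_+)\subseteq\HH_+$ and $\N_-=N(I_--\kappa G_-)\subseteq\HH_-$, with $Gy=\kappa^{-1}y$ on $\N_+$ and $Gy=-\kappa^{-1}y$ on $\N_-$. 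Every candidate equation $(I+\gamma G)y=u_0$ then decouples into the three components $(I_+\!+\gamma G_+)y^+=u_0^+$, $(I_-\!-\gamma G_-)y^-=u_0^-$ and $y^0=u_0^0$.

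For necessity I would test only with $u_0=0$. For $\gamma\in(-\kappa,\kappa)$ the operator $I+\gamma G$ is invertible, so the unique solution is $y=0$ and $\PI{Gy}{y}=0$; at $\gamma=\kappa$ the solutions are exactly $y\in\N_-$, giving $\PI{Gy}{y}=-\kappa^{-1}\|y\|^2\leq0$; at $\gamma=-\kappa$ they are $y\in\N_+$, giving $\PI{Gy}{y}=\kappa^{-1}\|y\|^2\geq0$. Hence a value $\nu>0$ can be realized only through $\gamma=-\kappa$ with $y\in\N_+\setminus\set{0}$, and a value $\nu<0$ only through $\gamma=\kappa$ with $y\in\N_-\setminus\set{0}$. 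Solvability for all $\nu$ therefore forces $\N_+\neq\set{0}$ and $\N_-\neq\set{0}$.

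For sufficiency, assume $\N_\pm\neq\set{0}$ and fix $u_0=u_0^++u_0^-+u_0^0$ and $\nu\in\RR$. On the open interval I would study $h(\gamma):=\PI{G(I+\gamma G)^{-1}u_0}{(I+\gamma G)^{-1}u_0}=g_+(\gamma)-g_-(\gamma)$, with $g_\pm$ the very functions appearing in Proposition \ref{prop_necesaria_extremos}; functional calculus shows $g_+$ is continuous and non-increasing and $g_-$ continuous and non-decreasing, so $h$ is continuous and non-increasing on $(-\kappa,\kappa)$. The boundary behaviour splits according to whether $u_0^+\in R(I_+-\kappa G_+)$ and $u_0^-\in R(I_--\kappa G_-)$. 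If the first membership fails, then $g_+(\gamma)\to+\infty$ as $\gamma\to-\kappa^+$; if the second fails, then $g_-(\gamma)\to+\infty$ as $\gamma\to\kappa^-$; in either case $h$ sweeps the relevant half-line and an interior solution is produced by the intermediate value theorem. When both memberships hold, $h$ extends continuously to $[-\kappa,\kappa]$ and covers $[h(\kappa),h(-\kappa)]$; the remaining values $\nu>h(-\kappa)$ are reached at $\gamma=-\kappa$ using $\PI{Gy}{y}=h(-\kappa)+\kappa^{-1}\|y_+\|^2$ for a suitable $y_+\in\N_+$ (here the orthogonality $(I_+-\kappa G_+)^\dag u_0^+\perp y_+$ is used), while $\nu<h(\kappa)$ are reached at $\gamma=\kappa$ with $\PI{Gy}{y}=h(\kappa)-\kappa^{-1}\|y_-\|^2$ for a suitable $y_-\in\N_-$. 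In every regime a pair $(\gamma,y)$ with the required properties is obtained.

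I expect the main obstacle to be the boundary bookkeeping in the sufficiency part: establishing the continuity and monotonicity of $g_\pm$ and, above all, matching the three regimes (interior solution, endpoint $-\kappa$, endpoint $\kappa$) so that their attainable sets of $\nu$ overlap and jointly exhaust $\RR$. The endpoint computations — solvability of $(I\pm\kappa G)y=u_0$, the pseudoinverse form of the particular solution, and the orthogonality of the added kernel vector — are the delicate technical points, but they are precisely the computations already carried out in Proposition \ref{prop_necesaria_extremos}, which I would reuse.
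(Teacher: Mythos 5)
Your proposal is correct and takes essentially the same route as the paper's own proof: the same decomposition $\HH=\HH_+\oplus\HH_-\oplus N(G)$, the same monotone functions (your $g_\pm$ are the paper's $f_\pm$), the same technical lemma converting finiteness of $\lim_{\tau\to\kappa}\|G_\pm^{1/2}(I_\pm-\tau G_\pm)^{-1}u\|$ into membership in $R(I_\pm-\kappa G_\pm)$, and the same endpoint construction of a solution as pseudoinverse term plus a suitably scaled vector of $\N_\pm$. The only differences are organizational: you prove necessity directly by testing $u_0=0$ where the paper cites \cite[Lemma 4.7]{GZMMP}, and you split cases by range membership of $u_0^\pm$ and the location of $\nu$ relative to the boundary values of $h$, rather than by the paper's trichotomy (an interior solution of $f_+=f_-+\nu$ exists, or one strict inequality holds on all of $(-\kappa,\kappa)$); the substance is identical.
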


The following theorem shows that the necessary conditions established in Lemma \ref{lema_condiciones_necesarias} and Propositions
\ref{prop_rangos_iguales} and \ref{prop_necesaria_extremos} are also sufficient
for Problem \ref{pb 2} to admit a solution for every $a\in R(A)+R(B)$ and
every $b\in R(B)$, for a given $\beta\in\RR$. Note that if $\la\in(\la_-,\la_+)$ and $D\in\mc{L}(\HH)$ is the solution to $(A+\la B)^{1/2}=WX$, the condition
\[
R(A+\la B)=\big(R(A)+R(B)\big)\cap R(WDD^*)
\]
is equivalent to $R(A+\la B)=R(A)+R(B)$.

\begin{thm}\label{teo_necesarias_suficientes} Given $\beta\in\RR$, $\ZZ(\beta,a,b)\neq\varnothing$ for every $a\in R(A)+R(B)$ and every $b\in R(B)$
if and only if the following conditions hold:
\begin{enumerate}[label=\roman*)]
\item $I_\geq(A,B)=[\la_-,\la_+]$ with $\la_-<\la_+$;
\item $N(A+\la_-B)\neq\set{0}$ and $N(A+\la_+B)\neq\set{0}$;
\item for every $\la\in[\la_-,\la_+]$, $R(A+\la B)=\big(R(A)+R(B)\big)\cap R(WDD^*)$,
where $D\in\mc{L}(\HH)$ is the solution to $(A+\la B)^{1/2}=WX$.
\end{enumerate}
\end{thm}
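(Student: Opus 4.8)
The plan is to prove the two implications separately. \emph{Necessity} is pure bookkeeping: if $\ZZ(\beta,a,b)\neq\varnothing$ for every $a\in R(A)+R(B)$ and every $b\in R(B)$, then conditions i) and ii) are exactly the conclusions of Lemma \ref{lema_condiciones_necesarias}; for iii) I would split on $\la$, using Proposition \ref{prop_rangos_iguales} together with the remark preceding the statement to obtain condition iii) for $\la\in(\la_-,\la_+)$, and Proposition \ref{prop_necesaria_extremos} to obtain it verbatim for $\la=\la_\pm$.

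The substance lies in \emph{sufficiency}. Assuming i)--iii), by Proposition \ref{prop_pivote} it suffices to produce a solution of $\ZZ(\nu,a,0)$ for every $\nu\in\RR$ and every $a\in R(A)+R(B)$, and by Lemma \ref{lema_nueva_normal} this is equivalent to solving the reduced system \eqref{eq:nuevo_sistema} with $(\gamma,y)\in[-\kappa,\kappa]\x\mc{R}$ for every $\nu\in\RR$ and every $u_0\in\mc{R}$; here condition iii) at $\la=\rho$ (where $D=I$ and $A+\rho B=W^2$) gives $R(A)+R(B)=R(W^2)$, which guarantees $u_0=W^{-1}c\in\mc{R}$. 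I would then translate the two remaining hypotheses into the language of $G$: by \eqref{eq:nucleos_buenos} and the injectivity of $W$, condition ii) yields $\N_-\cap\mc{R}\neq\set{0}$ and $\N_+\cap\mc{R}\neq\set{0}$ (so in particular $\N_\pm\neq\set{0}$), while condition iii), through Proposition \ref{prop:rango_invariante} and Lemma \ref{lema_condiciones_rangos}, yields $(I+\gamma G)(\mc{R})=\mc{R}$ for every $\gamma\in(-\kappa,\kappa)$ and $(I\pm\kappa G)(\mc{R})=\mc{R}\cap R(I\pm\kappa G)$.

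With these facts in hand, the idea is to follow the case analysis in the proof of Proposition \ref{prop_necesarias_suficientes_normal}, checking that at each step the relevant vectors can be kept inside $\mc{R}$. For $\gamma\in(-\kappa,\kappa)$ the operator $I+\gamma G$ is invertible and $(I+\gamma G)(\mc{R})=\mc{R}$, so the unique solution $y(\gamma)=(I+\gamma G)^{-1}u_0$ of the linear equation automatically lies in $\mc{R}$, and the scalar $\PI{Gy(\gamma)}{y(\gamma)}$ is identical to the one appearing in the $\HH$-analysis. When the case analysis calls for an endpoint $\gamma=\pm\kappa$, the $\HH$-argument already provides $u_0\in R(I\pm\kappa G)$; combined with $u_0\in\mc{R}$ and the identity $(I\pm\kappa G)(\mc{R})=\mc{R}\cap R(I\pm\kappa G)$, this yields a solution $\hat y\in\mc{R}$ of $(I\pm\kappa G)y=u_0$. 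The quadratic constraint is then met by adding a multiple $t\,z_0$ of a nonzero $z_0\in\N_\mp\cap\mc{R}$: this keeps the vector in $\mc{R}$ and in the solution set of the linear equation, while $\PI{G(\hat y+t z_0)}{\hat y+t z_0}$ becomes a parabola in $t$ (opening downward for $\gamma=\kappa$, upward for $\gamma=-\kappa$, since $Gz_0=\mp\tfrac1\kappa z_0$) that can be driven to the prescribed $\nu$. Feeding the resulting pair $(\gamma,y)\in[-\kappa,\kappa]\x\mc{R}$ back through Lemma \ref{lema_nueva_normal} produces the desired element of $\ZZ(\beta,a,b)$.

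The hard part is precisely this passage from $\HH$ to the dense, \emph{non-closed} subspace $\mc{R}$ at the endpoints $\gamma=\pm\kappa$: one must solve the linear equation inside $\mc{R}$, which is exactly what the delicate range identity $(I\pm\kappa G)(\mc{R})=\mc{R}\cap R(I\pm\kappa G)$ of Lemma \ref{lema_condiciones_rangos} (equivalently Proposition \ref{prop_necesaria_extremos}) supplies, and one must still retain enough freedom inside $\N_\mp\cap\mc{R}$ to realize the value $\nu$. Verifying that the interior continuity argument and the two endpoint parabolas \emph{jointly} cover all of $\RR$ without ever leaving $\mc{R}$ — in particular that the supremum attainable along $\N_\mp\cap\mc{R}$ meets the interior range — is the step requiring care, and it is the reason conditions ii) and iii) must be imposed together rather than in isolation.
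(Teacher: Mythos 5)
Your overall architecture is the same as the paper's: the necessity part is exactly the paper's (Lemma \ref{lema_condiciones_necesarias} plus Propositions \ref{prop_rangos_iguales} and \ref{prop_necesaria_extremos}), and for sufficiency you, like the paper, reduce via Lemma \ref{lema_nueva_normal} to solving \eqref{eq:nuevo_sistema} with $(\gamma,y)\in[-\kappa,\kappa]\x\mc{R}$ for $u_0\in\mc{R}$, handle interior $\gamma$ through the invertibility of $I+\gamma G$ together with $(I+\gamma G)(\mc{R})=\mc{R}$, and use condition iii) at the endpoints through Lemma \ref{lema_condiciones_rangos} to produce a solution $\hat y\in\mc{R}$ of the linear equation $(I\pm\kappa G)y=u_0$. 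Up to that point your argument is correct and coincides with the paper's.

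The gap is in the endpoint step, and it is precisely the issue you flag in your last paragraph without resolving. Take $\gamma=\kappa$, let $\hat y\in\mc{R}$ satisfy $(I+\kappa G)\hat y=u_0$, and let $z_0\in\N_-\cap\mc{R}$, $\|z_0\|=1$. Since $Gz_0=-\tfrac1\kappa z_0$, for $t\in\RR$
\begin{equation*}
\PI{G(\hat y+tz_0)}{\hat y+tz_0}=\PI{G\hat y}{\hat y}-\tfrac{2t}{\kappa}\,\real\PI{\hat y}{z_0}-\tfrac{t^2}{\kappa},
\end{equation*}
a downward parabola whose maximum is $\PI{G\hat y}{\hat y}+\tfrac1\kappa\big(\real\PI{\hat y}{z_0}\big)^2$. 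The analysis in $\HH$ (Proposition \ref{prop_necesarias_suficientes_normal}) guarantees solvability at $\gamma=\kappa$ exactly for $\nu\leq\PI{G\hat y}{\hat y}+\tfrac1\kappa\|\zeta_0\|^2$, where $\zeta_0$ denotes the orthogonal projection of $\hat y$ onto $\N_-$; by Cauchy--Schwarz the parabola's maximum is strictly below this bound unless $\zeta_0$ is a real multiple of $z_0$, i.e.\ unless the kernel component of $\hat y$ already lies in $\N_-\cap\mc{R}$. Nothing you have established forces this: condition iii) at $\la_+$ yields \emph{some} solution in $\mc{R}$ but gives no control over its $\N_-$-component, and $\N_-\cap\mc{R}$ may well be a proper, non-dense subspace of $\N_-$. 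Hence for $\nu$ in the window between the parabola's maximum and $\PI{G\hat y}{\hat y}+\tfrac1\kappa\|\zeta_0\|^2$ your construction produces nothing, and in this regime there are no interior solutions either (that is why the endpoint was invoked), so the phrase ``can be driven to the prescribed $\nu$'' is unjustified. Note that the paper does not argue this way: it splits the endpoint case according to whether $\PI{Gw_0}{w_0}$ equals, exceeds, or is less than $\nu$, uses the downward parabola only in the second case (where the parabola merely has to \emph{decrease} from a value already $\geq\nu$), and treats the third case by a separate device involving $\N_+\cap\mc{R}$. Your proposal collapses these cases into one parabola argument, which fails exactly when $\nu>\PI{G\hat y}{\hat y}$. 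To close the gap one must show that $\hat y$ can be chosen with kernel component in $\N_-\cap\mc{R}$ (equivalently, that $(I+\kappa G)^{\dag}u_0\in\mc{R}$ whenever $u_0\in(I+\kappa G)(\mc{R})$); identifying this as ``the step requiring care'' is not a substitute for proving it.
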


\begin{proof}

The necessity of the conditions follows from
Lema \ref{lema_condiciones_necesarias}, and Propositions \ref{prop_rangos_iguales} and \ref{prop_necesaria_extremos}.

Assume that conditions {\it i)}, {\it ii)} and {\it iii)} hold. Fix $\beta\in\RR$, and let $a\in R(A)+R(B)$ and $b\in R(B)$. Consider the vector $c\in R(A)+R(B)$ and the constant $\nu\in\RR$ defined in \eqref{eq:u0},
and set $u_0=W^{-1}c$. Since $R(W^2)=R(A)+R(B)$, we have that $u_0\in W^{-1}\big(R(W^2)\big)=\mc{R}$.
By Lemma \ref{lema_nueva_normal}, it is sufficient to show that there exists $(\gamma,y)\in[-\kappa,\kappa]\x\mc{R}$ satisfying
\begin{equation}\label{eq:sistema_2}
\begin{cases}
\,(I+\gamma G)y=u_0,\\
\,\PI{Gy}{y}=\nu,
\end{cases}
\end{equation}
in order to prove that $y\in\ZZ(\beta,a,b)$. Since {\it ii)} implies that $\mc{N}_+\cap\mc{R}\neq\set{0}$ and $\mc{N}_-\cap\mc{R}\neq\set{0}$,
there exists $(\gamma,y)\in[-\kappa,\kappa]\x\HH$ such that $(\gamma,y)$ satisfies \eqref{eq:sistema_2}, see Proposition \ref{prop_necesarias_suficientes_normal}.
If $\gamma\in(-\kappa,\kappa)$, then $y$ is unique and
since $(I+\gamma G)y=u_0\in\mc{R}$, by Proposition \ref{prop:rango_invariante} we have that $y\in \mc{R}$, and thus $y\in \ZZ(\beta,a,b)$.

By {\it iii)} and Lemma \ref{lema_condiciones_rangos}, if $\gamma=\kappa$ then $u_0\in\mc{R}\cap R(I+\kappa G)=(I+\kappa G)(\mc{R})$.
Thus, there exists $z_0\in \N_-$ such that
\[
w_0:=(I+\kappa G)^\dag u_0+z_0\in\mc{R}.
\]
If $\PI{Gw_0}{w_0}=\nu$, then the pair $(\kappa,w_0)$ satisfies \eqref{eq:sistema_2}, and $w_0\in\ZZ(\beta,a,b)$.
Now assume that $\PI{Gw_0}{w_0}>\nu$, and choose $z\in\mc{N}_-\cap \mc{R}$ with $\|z\|=1$.
Setting
\[
\alpha=-\real\PI{w_0}{z}+\Big(\big(\real\PI{w_0}{z}\big)^2+\kappa\big(\PI{Gw_0}{w_0}-\nu\big)\Big)^{1/2},
\]
we get that the pair $(\kappa,w_0+\alpha z)$ satisfies \eqref{eq:sistema_2}, and also $w_0+\alpha z\in\mc{R}$. Thus, $w_0+\alpha z\in\ZZ(\beta,a,b)$.
If $\PI{Gw_0}{w_0}<\nu$, since $N(A+\la_-B)\neq\set{0}$ we can follow the same procedure for some $z\in\N_+\cap\mc{R}$ with $\|z\|=1$.

A similar argument holds if we assume that $\gamma=-\kappa$, thus proving the sufficiency.
\end{proof}

 In the proof above we showed that, under the stated conditions, given $u_0\in\mc{R}$ and $\nu\in\RR$
there exists a pair $(\gamma,y)\in[-\kappa,\kappa]\times\mc{R}$
that satisfies \eqref{eq:sistema_2}.
On the one hand, if $\gamma\in(-\kappa,\kappa)$ then the vector $y$ is unique. On the other hand, if
$\gamma=\kappa$ then there exist $\alpha\geq0$ and $\w{y}\in\mc{R}$ such that any vector
\[
y\in\w{y}+\alpha \cdot\N_-\cap\mc{R}\cap\St
\]
satisfies that $(\kappa,y)$ is a solution to \eqref{eq:sistema_2}, where $\St$ denotes the unit sphere $\St=\set{x\in\HH\,:\,\|x\|=1}$.
An analogous statement holds for $\gamma=-\kappa$.

\subsection{Final remarks}\label{sub_remarks}

\begin{enumerate}[wide, labelwidth=!, labelindent=0pt]
\item If $R(A+\la_0B)$ is closed for some $\la_0\in(\la_-,\la_+)$, then $R(A+\la B)$ is closed for every $\la\in(\la_-,\la_+)$, see \cite[Cor. 3.7]{GZMMP2}.
In this case,
\[
R(A+\la B)=R(A)+R(B)=\HH\qquad\text{for every $\la\in(\la_-,\la_+)$.}
\]
Also, $W$ is invertible and it is immediate that
\[
R(A+\la_-B)=R(WD_-D_-^*)=\big(R(A)+R(B)\big)\cap R(WD_-D_-^*)
\]
and
\[
R(A+\la_+B)=R(WD_+D_+^*)=\big(R(A)+R(B)\big)\cap R(WD_+D_+^*).
\]
Then, following arguments similar to the ones used in the proof of \cite[Prop. 4.3]{GZMMP} we get the next theorem, which shows that if we enlarge
the set of admissible data points to $\mc{R}\x R(B)$, then $R(A+\la B)=\HH$ for any $\la\in(\la_-,\la_+)$. 

\begin{thm}\label{teo_final}Given $\beta\in\RR$, $\ZZ(\beta,a,b)\neq\varnothing$ for every $a\in\mc{R}$ and every $b\in R(B)$
if and only if the following conditions hold:
\begin{enumerate}[label=\roman*)]
\item $I_\geq(A,B)=[\la_-,\la_+]$ with $\la_-<\la_+$;
\item $N(A+\la_-B)\neq\set{0}$ and $N(A+\la_+B)\neq\set{0}$;
\item $R(A+\la B)=\HH$ for some $\la\in(\la_-,\la_+)$.
\end{enumerate}
\end{thm}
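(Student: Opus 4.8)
The plan is to deduce both implications from Theorem \ref{teo_necesarias_suficientes}, whose hypotheses already concern the data set $\big(R(A)+R(B)\big)\x R(B)$. Since $R(A)+R(B)\subseteq\mc{R}$ by Corollary \ref{cor:lambdas_distintos_0}, solvability over the larger set $\mc{R}\x R(B)$ is a formally stronger requirement, and the whole idea is to show that this extra room forces the interior ranges to be closed, equivalently $R(A+\la B)=\HH$. Once that is established, the set $\mc{R}$ collapses to $R(A)+R(B)=\HH$ and the two theorems describe the same situation.

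For the necessity, I would first observe that because $R(A)+R(B)\subseteq\mc{R}$, the hypothesis gives in particular $\ZZ(\beta,a,b)\neq\varnothing$ for every $a\in R(A)+R(B)$ and every $b\in R(B)$. Hence Theorem \ref{teo_necesarias_suficientes} applies and yields conditions {\it i)} and {\it ii)}, together with the interior range identity $R(A+\rho B)=R(A)+R(B)$, i.e.\ $R(W^2)=R(A)+R(B)$. Then I would exploit the additional data points: for an arbitrary $a\in\mc{R}$ take $b=0$; solvability and Theorem \ref{teo_derivada} produce $\la\in[\la_-,\la_+]$ with $a\in R(A+\la B)$, and by Corollary \ref{cor:lambdas_distintos_0} this gives $a\in R(A)+R(B)$. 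As $a\in\mc{R}$ was arbitrary, $\mc{R}\subseteq R(A)+R(B)$, and combined with the reverse inclusion, $\mc{R}=R(A)+R(B)=R(W^2)$. Thus $R(W)=R(W^2)$, which for a positive injective operator forces $R(W)$ to be closed and $W$ to be invertible; therefore $\mc{R}=\HH$ and $R(A+\la B)=R(W^2)=\HH$ for $\la\in(\la_-,\la_+)$, which is condition {\it iii)}.

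For the sufficiency, I would start from condition {\it iii)}: $R(A+\la_0 B)=\HH$ for some $\la_0\in(\la_-,\la_+)$ is closed, so by the remark preceding the theorem (\cite[Cor. 3.7]{GZMMP2}) every interior range is closed and equal to $R(A)+R(B)=\HH$; in particular $W$ is invertible and $\mc{R}=\HH$. The same remark records that the endpoint identities $R(A+\la_\pm B)=\big(R(A)+R(B)\big)\cap R(WD_\pm D_\pm^*)$ then hold automatically. Consequently all three hypotheses of Theorem \ref{teo_necesarias_suficientes} are met, so $\ZZ(\beta,a,b)\neq\varnothing$ for every $a\in R(A)+R(B)$ and every $b\in R(B)$; and since $R(A)+R(B)=\HH=\mc{R}$, the quantifier over $a\in R(A)+R(B)$ is exactly the quantifier over $a\in\mc{R}$, which finishes the proof.

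The main obstacle is the passage $R(W)=R(W^2)\Rightarrow W$ invertible in the necessity argument, since this is the precise point where the density of $\mc{R}$ is converted into closedness of the interior ranges. I expect to justify it with the classical fact that, for a positive operator $T$, one has $R(T^{1/2})=R(T)$ if and only if $R(T)$ is closed, applied to $T=W^2$ so that $T^{1/2}=W$; as $W^2$ is injective, closedness of its range makes it invertible. Everything else reduces to bookkeeping around Theorem \ref{teo_necesarias_suficientes} and the cited closedness remark.
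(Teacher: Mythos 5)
Your proposal is correct and takes essentially the same route as the paper: necessity by applying Theorem \ref{teo_necesarias_suficientes} to the smaller data set and then using the extra points $a\in\mc{R}$, $b=0$ together with Theorem \ref{teo_derivada} to get $\mc{R}\subseteq R(A)+R(B)$, hence closedness and $R(A+\rho B)=\HH$; sufficiency via the closedness remark preceding the theorem. The step you flag as the main obstacle, $R(W)=R(W^2)\Rightarrow W$ invertible, is precisely the classical fact the paper invokes implicitly when it writes ``$\mc{R}\subseteq R(A+\rho B)$, i.e.\ $R(A+\rho B)$ is closed,'' so your justification fills in the same argument rather than departing from it.
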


\begin{proof} The sufficiency follows from the discussion above. To prove the converse, assume that $\ZZ(\beta,a,b)\neq\varnothing$ for every $a\in\mc{R}$ and
every $b\in R(B)$. By Theorem \ref{teo_necesarias_suficientes}, we only have to prove {\it iii)}. 
By Proposition \ref{prop_rangos_iguales}, $R(A+\la B)=R(A)+R(B)$ for every $\la\in(\la_-,\la_+)$.
Let $a\in\mc{R}$, since $\ZZ(\beta,a,0)\neq0$ by Theorem \ref{teo_derivada} there exists $(\la,x)\in[\la_-,\la_+]\x\HH$
satisfying $g(x)=\beta$ and
\[
-a=(A+\la B)x\in R(A)+R(B).
\]
Since $a\in\mc{R}$ is arbitrary, it follows that $R\big((A+\rho B)^{1/2}\big)=\mc{R}\subseteq R(A)+R(B)=R(A+\rho B)$, i.e. $R(A+\rho B)$ is closed.
Since $N(A+\rho B)=N(A)\cap N(B)=\set{0}$, this implies that $R(A+\rho B)=\HH$.
\end{proof}

\item To end this section we comment on the case in which $N(A)\cap N(B)\neq\{0\}$. Assume that $a\in R(A) + R(B)$ and $b\in R(B)$. Decomposing $x\in\HH$ as $x=x_0 + x_1$ with $x_0\in N(A)\cap N(B)$ and $x_1\in \HH':=\big(N(A)\cap N(B)\big)^\bot$, it is easy to see that
\begin{align*}
f(x) = f(x_0 + x_1)=f(x_0) + f(x_1)=f(x_1) \quad \text{and} \quad g(x) = g(x_0 + x_1)=g(x_0) + g(x_1)=g(x_1),
\end{align*}
because $f(x_0)=g(x_0)=0$. Hence, if $x\in \ZZ(\beta,a,b)$ then $x_1\in\ZZ(\beta,a,b)$, and also 
\[
x_1 + u\in \ZZ(\beta,a,b) \qquad \text{for every $u\in N(A)\cap N(B)$}.
\]
Note that $x_1$ is also a solution to Problem \ref{pb 2} if it is stated in the Hilbert space $\HH'$. Therefore, 
\[
\ZZ_\HH(\beta,a,b)= \ZZ_{\HH'}(\beta,a,b) + N(A)\cap N(B),
\]
where $\ZZ_{\HH}(\beta,a,b)$ and $\ZZ_{\HH'}(\beta,a,b)$ are the sets of solutions to Problem \ref{pb 2} in $\HH$ and in $\HH'$, respectively. 

\medskip

Now, assuming that $a\in\HH$ and $b\in\HH'$, we show that if $\ZZ(\beta,a,b)\neq\varnothing$ then $a\in\HH'$. Indeed, $x\in\ZZ(\beta,a,b)$ if and only if $g(x)=\beta$ and 
\[
(A+\la B)x=-(a+\la b) \qquad \text{for some $\la\in[\la_-,\la_+]$}.
\] 
Decomposing $x=x_0 + x_1$ and $a=a_0+a_1$  with $x_0,a_0\in N(A)\cap N(B)$ and $x_1,a_1\in \HH'$, we obtain
\[
(A+\la B)x_1=(A+\la B)x=-(a+\la b)=-(a_1+\la b) -a_0.
\]
Thus, $a_0=0$ and $a\in\HH'$, as we announced it. A similar argument shows that, if $\ZZ(\beta,a,b)\neq\varnothing$ for $a\in\HH'$ and $b\in\HH$, then $b\in\HH'$.

\medskip

Finally, let us mention that the situation in which $\ZZ(\beta,a,b)\neq\varnothing$ for both $a\notin \HH'$ and $b\notin\HH'$ is quite rare. In fact,
assume that $x\in \ZZ(\beta,a,b)$ and decompose $a$, $b$, and $x$ as $a=a_0+a_1$, $b=b_0+b_1$ and $x=x_0 + x_1$ with $a_0,b_0,x_0\in N(A)\cap N(B)$
and $a_1,b_1,x_1\in \HH'$. Then, by Proposition \ref{prop_unicidad_lambda}, there exists a unique $\la\in[\la_-,\la_+]$ such that $(A+\la B)x=-(a+\la b)$, and this condition can be written as
\[
(A+\la B)x_1=(A+\la B)x=-(a+\la b)=-(a_1+\la b_1) -(a_0+\la b_0).
\]
Hence, $a_0+\la b_0=0$. This is very restrictive due to the uniqueness of $\la$. 
 
\medskip

All these  remarks imply that the subspace $N(A)\cap N(B)$ plays no significant role in the study of Problem \ref{pb 2}. 

\end{enumerate}

\section{Appendix: Proofs of Propositions \ref{prop_unicidad_lambda} and \ref{prop_necesarias_suficientes_normal}}

For the sake of completeness we include proofs of Propositions \ref{prop_unicidad_lambda} and \ref{prop_necesarias_suficientes_normal}, which are similar to those
of some already published results.

\begin{customprop}{3.4}
Given $\beta\in\RR$, let $(a,b)\in \mc{A}^c$ be such that $\ZZ(\beta,a,b)\neq\varnothing$.
Then, there exists a unique $\lambda\in I_\geq(A,B)$ such that
$$(A+\lambda B) x =-(a+\lambda b),$$
for every $ x \in \ZZ(\beta,a,b)$.
\end{customprop}

\begin{proof}
Let $I_\geq(A,B)=[\lambda_-,\lambda_+]$. If $\lambda_-=\lambda_+$, then the result is trivial. Assume now that $\lambda_-\neq\lambda_+$. Given
$x_1,x_2\in\ZZ(\beta,a,b)$, let $\lambda_1,\lambda_2\in I_\geq(A,B)$ be such that
\begin{align}\label{eq:ecuaciones}
(A+\lambda_1 B)x_1&=-(a+\lambda_1 b),\nonumber\\
(A+\lambda_2 B)x_2&=-(a+\lambda_2 b).
\end{align}
Since $g(x_i)=\beta$ for $i=1,2$,
\begin{equation}\label{eq:valor_minimo}
f(x_i)=f(x_i)+\lambda_i\big(g(x_i)-\beta)=\PI{(A+\lambda_i B)x_i}{x_i}+2\real\PI{a+\lambda_i B}{x_i}-\la_i\beta=\PI{a+\lambda_i b}{x_i}-\lambda_i\beta.
\end{equation}
Then,
\begin{equation}\label{eq:ecuacion_1}
\PI{a+\lambda_1b}{x_1}-\lambda_1\beta=\PI{a+\lambda_2b}{x_2}-\lambda_2\beta,
\end{equation}
because $f(x_1)=f(x_2)$. Also,
\begin{align*}
\PI{(A+\lambda_1 B)x_1}{x_2}&=-\PI{a+\lambda_1b}{x_2}=-\PI{a+\lambda_2b}{x_2}+(\lambda_2-\lambda_1)\PI{b}{x_2},\\
\PI{(A+\lambda_2 B)x_2}{x_1}&=-\PI{a+\lambda_2b}{x_1}=-\PI{a+\lambda_1b}{x_1}+(\lambda_1-\lambda_2)\PI{b}{x_1},
\end{align*}
implies that
\begin{align}\label{eq:sistema}
\PI{Ax_1}{x_2}+\lambda_1\PI{Bx_1}{x_2}+(\lambda_1-\lambda_2)\PI{b}{x_2}+\lambda_2\beta=-\big(\PI{a+\lambda_2b}{x_2}-\lambda_2\beta\big),\nonumber\\
\PI{Ax_2}{x_1}+\lambda_2\PI{Bx_2}{x_1}+(\lambda_2-\lambda_1)\PI{b}{x_1}+\lambda_1\beta=-\big(\PI{a+\lambda_1b}{x_1}-\lambda_1\beta\big).
\end{align}
Combining \eqref{eq:ecuacion_1} and \eqref{eq:sistema} yields
\begin{align}\label{eq:ecuacion_2}
&\lambda_1\parentesisb{\PI{Bx_1}{x_2}+\PI{b}{x_1}+\PI{b}{x_2}-\beta}=\\\nonumber
&=\lambda_2\parentesisb{\PI{Bx_2}{x_1}+\PI{b}{x_1}+\PI{b}{x_2}-\beta}+\PI{Ax_2}{x_1}-\PI{Ax_1}{x_2}.
\end{align}
This implies that if $\real\PI{Bx_1}{x_2}+\real\PI{b}{x_1}+\real\PI{b}{x_2}-\beta\neq0$, then $\lambda_1=\lambda_2$.
Suppose now that 
\begin{equation}\label{eq:ecuacion_2}
\real\PI{Bx_1}{x_2}+\real\PI{b}{x_1}+\real\PI{b}{x_2}-\beta=0.
\end{equation}
By Cauchy-Schwarz inequality
\begin{equation}\label{eq:4}
\big(\real\PI{(A+\lambda_2 B)x_1}{x_2}\big)^2\leq|\PI{(A+\lambda_2B)x_1}{x_2}|^2\leq\PI{(A+\lambda_2B)x_1}{x_1}\PI{(A+\lambda_2B)x_2}{x_2}.
\end{equation}
We now proceed to derive expressions for the left and right sides of \eqref{eq:4}, by using the assumption \eqref{eq:ecuacion_2}.

In the first place, $\PI{a+\la_2b}{x_2}\in\RR$, and combining the first equation in \eqref{eq:sistema} and \eqref{eq:ecuacion_2}
we have that
\begin{equation}\label{eq:1}
\real\PI{(A+\lambda_2 B)x_1}{x_2}=-\PI{a+\lambda_2b}{x_2}+(\lambda_1-\lambda_2)\parentesis{\real\PI{b}{x_1}-\beta}.
\end{equation}
Secondly, by \eqref{eq:ecuacion_1},
\begin{align}\label{eq:2}
&\PI{(A+\lambda_2 B)x_1}{x_1}=\nonumber\\
&=\Big(-\PI{a+\lambda_2b}{x_2}+(\lambda_1-\lambda_2)\parentesis{\real\PI{b}{x_1}-\beta}\Big)-(\lambda_1-\lambda_2)\Big(\PI{Bx_1}{x_1}+\real\PI{b}{x_1}\Big)
\end{align}
Finally, considering that
\[
\PI{Bx_1}{x_1}+2\real\PI{b}{x_1}-\beta=g(x_1)-\beta=0,
\]
it follows
\begin{align}\label{eq:3}
&\PI{(A+\lambda_2B)x_2}{x_2}=\nonumber\\
&=\Big(-\PI{a+\lambda_2b}{x_2}+(\lambda_1-\lambda_2)\parentesis{\real\PI{b}{x_1}-\beta}\Big)+(\lambda_1-\lambda_2)\Big(\PI{Bx_1}{x_1}+\real\PI{b}{x_1}\Big).
\end{align}
Then, replacing \eqref{eq:1}, \eqref{eq:2} and \eqref{eq:3} in \eqref{eq:4} yields
\begin{align}\label{eq:horrible_1}
&\Big(-\PI{a+\lambda_2b}{x_2}+(\lambda_1-\lambda_2)\parentesis{\real\PI{b}{x_1}-\beta}\Big)^2\leq\nonumber\\
&\leq\Big(-\PI{a+\lambda_2b}{x_2}+(\lambda_1-\lambda_2)\parentesis{\real\PI{b}{x_1}-\beta}\Big)^2-\bigg((\lambda_1-\lambda_2)\Big(\PI{Bx_1}{x_1}+\real\PI{b}{x_1}\Big)\bigg)^2.
\end{align}
Analogously, considering the inequality
\begin{equation*}
\big(\real\PI{(A+\lambda_1 B)x_1}{x_2}\big)^2\leq|\PI{(A+\lambda_1B)x_1}{x_2}|^2\leq\PI{(A+\lambda_1B)x_1}{x_1}\PI{(A+\lambda_1B)x_2}{x_2},
\end{equation*}
and following the procedure above but interchanging $x_1$ with $x_2$ and $\lambda_1$ with $\lambda_2$ yields
\begin{align}\label{eq:horrible_2}
&\Big(-\PI{a+\lambda_1b}{x_1}+(\lambda_2-\lambda_1)\parentesis{\real\PI{b}{x_2}-\beta}\Big)^2\leq\nonumber\\
&\leq\Big(-\PI{a+\lambda_1b}{x_1}+(\lambda_2-\lambda_1)\parentesis{\real\PI{b}{x_2}-\beta}\Big)^2-\bigg((\lambda_2-\lambda_1)\Big(\PI{Bx_2}{x_2}+\real\PI{b}{x_2}\Big)\bigg)^2.
\end{align}
Hence, from \eqref{eq:horrible_1} and \eqref{eq:horrible_2}, if $\PI{Bx_1}{x_1}+\real\PI{b}{x_1}\neq0$ or $\PI{Bx_2}{x_2}+\real\PI{b}{x_2}\neq0$, then $\lambda_1=\lambda_2$.
Assume that
\[
\PI{Bx_1}{x_1}+\real\PI{b}{x_1}=0\qquad\text{and}\qquad\PI{Bx_2}{x_2}+\real\PI{b}{x_2}=0.
\]
Since $g(x_1)=\beta$ and $g(x_2)=\beta$, this implies that
\[
\PI{Bx_1}{x_1}=-\real\PI{b}{x_1}=-\beta\qquad\text{and}\qquad\PI{Bx_2}{x_2}=-\real\PI{b}{x_2}=-\beta,
\]
which in turn, by \eqref{eq:ecuacion_1}, implies that
\[
\real\PI{a}{x_1}=\real\PI{a}{x_2}.
\]
Since $f(x_1)=f(x_2)$, from this last equation it follows that $\PI{Ax_1}{x_1}=\PI{Ax_2}{x_2}$. From \eqref{eq:ecuacion_2} we also have
that $\real\PI{Bx_1}{x_2}=-\beta$. Finally,
\[
\PI{Ax_1}{x_1}=\PI{(A+\lambda_1B)x_1}{x_1}-\lambda_1\PI{Bx_1}{x_1}=-\PI{a+\lambda_1b}{x_1}-\lambda_1\PI{Bx_1}{x_1}=-\real\PI{a}{x_1},
\]
and by \eqref{eq:1},
\begin{align*}
\real\PI{Ax_1}{x_2}&=\real\PI{(A+\lambda_2 B)x_1}{x_2}-\lambda_2\real\PI{Bx_1}{x_2}\\
&=-\PI{a+\lambda_2b}{x_2}+(\lambda_1-\lambda_2)\parentesis{\real\PI{b}{x_1}-\beta}-\lambda_2\real\PI{Bx_1}{x_2}\\
&=-\real\PI{a}{x_2}.
\end{align*}
Hence,
\[
\real\PI{Ax_1}{x_2}=\PI{Ax_1}{x_1}=\PI{Ax_2}{x_2}.
\]
Let $\rho\in(\lambda_-,\lambda_+)$, then
\begin{align*}
&\PI{(A+\rho B)(x_1-x_2)}{x_1-x_2}=\\
&=\big(\PI{Ax_1}{x_1}+\PI{Ax_2}{x_2}-2\real\PI{Ax_1}{x_2}\big)+\rho\big(\PI{Bx_1}{x_1}+\PI{Bx_2}{x_2}-2\real\PI{Bx_1}{x_2}\big)\\
&=0.
\end{align*}
Then, by Proposition \ref{nucleo del interior},
\[
x_1-x_2\in N\big((A+\rho B)^{1/2}\big)= N(A+\rho B)=N(A)\cap N(B)=\set{0},
\]
and thus $x_1=x_2$. Then,
\[
-(a+\lambda_1b)=(A+\lambda_1B)x_1=(A+\lambda_1B)x_2=(A+\lambda_2 B)x_2+(\lambda_1-\lambda_2)Bx_2=-(a+\lambda_2 b)+(\lambda_1-\lambda_2)Bx_2,
\]
and consequently
\[
(\lambda_1-\lambda_2)(Bx_2+b)=0.
\]
By \eqref{eq:ecuaciones}, if $Bx_2=-b$ then $Ax_2=-a$ which is a contradiction, because $(a,b)\in\mc{A}^c$.
Hence, $\lambda_1=\lambda_2$.
\end{proof}

Define
\[
\mc{D_+}:=\HH_+\ominus \N_+\qquad\text{and}\qquad\mc{D_-}:=\HH_-\ominus \N_-,
\]
the \emph{positive defect subspace} of $\N_+$ and \emph{negative defect subspace} of $\N_-$, respectively.
For the proof of Proposition \ref{prop_necesarias_suficientes_normal} we make use of the next two technical lemmas. Their proofs follow the lines of the
proofs of \cite[Lemma 4.5]{GZMMP} and \cite[Lemma 4.6]{GZMMP}, respectively.

\begin{lema*}\label{lema:primera_serie}
Given $u\in\HH_\pm$, decompose it as $u=v+w$ with $v\in\mc{N}_\pm$ and $w\in\mc{D}_\pm$.
Then, for every $\tau\in(-\kappa,\kappa)$,
\begin{align*}
\|G_\pm^{1/2}(I_\pm\pm \tau G_\pm)^{-1}u\|^2 
=\frac{\kappa}{(\kappa\pm \tau)^2}\|v\|^2+\|G_\pm^{1/2}(I_\pm\pm \tau G_\pm)^{-1}w\|^2.
\end{align*}
\end{lema*}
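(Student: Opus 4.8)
The plan is to treat both signs at once and to exploit the fact that $\mc{N}_\pm$ is an eigenspace of $G_\pm$, on which every bounded function of $G_\pm$ acts as a scalar. First I would identify $\mc{N}_\pm$ explicitly inside $\HH_\pm$. For $\mc{N}_+=N(I-\kappa G)$, observe that on the reducing subspaces $\HH_-$ and $N(G)$ the operator $I-\kappa G$ restricts to $I_-+\kappa G_-$ and to the identity, both positive definite and hence injective; therefore $\mc{N}_+=N(I_+-\kappa G_+)\subseteq\HH_+$, i.e. $\mc{N}_+$ is the $\tfrac1\kappa$-eigenspace of $G_+$. Symmetrically, $\mc{N}_-$ is the $\tfrac1\kappa$-eigenspace of $G_-$. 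In either case every $v\in\mc{N}_\pm$ satisfies $G_\pm v=\tfrac1\kappa v$.

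Next I would evaluate $T_\tau:=G_\pm^{1/2}(I_\pm\pm\tau G_\pm)^{-1}$ on $\mc{N}_\pm$ by functional calculus. The inverse is well defined because $I+\tau G$ is positive definite for $\tau\in(-\kappa,\kappa)$ by \eqref{eq:pencil_alternativo}, so its compressions $I_\pm\pm\tau G_\pm$ to the reducing subspaces $\HH_\pm$ are positive definite as well. On $\mc{N}_\pm$ one has $G_\pm^{1/2}v=\kappa^{-1/2}v$ and $(I_\pm\pm\tau G_\pm)^{-1}v=\tfrac{\kappa}{\kappa\pm\tau}\,v$, whence $T_\tau v=\tfrac{\sqrt{\kappa}}{\kappa\pm\tau}\,v$ and $\|T_\tau v\|^2=\tfrac{\kappa}{(\kappa\pm\tau)^2}\|v\|^2$. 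This already produces the first summand on the right-hand side of the claimed identity.

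It then remains to expand $\|T_\tau u\|^2=\|T_\tau v\|^2+\|T_\tau w\|^2+2\real\PI{T_\tau v}{T_\tau w}$ and to show that the cross term vanishes. Since $G_\pm^{1/2}$ and $(I_\pm\pm\tau G_\pm)^{-1}$ are commuting selfadjoint functions of $G_\pm$, the operator $T_\tau^*T_\tau=G_\pm(I_\pm\pm\tau G_\pm)^{-2}$ is again a function of $G_\pm$ and hence leaves the eigenspace $\mc{N}_\pm$ invariant; explicitly $T_\tau^*T_\tau v=\tfrac{\kappa}{(\kappa\pm\tau)^2}\,v\in\mc{N}_\pm$. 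As $w\in\mc{D}_\pm=\HH_\pm\ominus\mc{N}_\pm$ is orthogonal to $\mc{N}_\pm$, we obtain $\real\PI{T_\tau v}{T_\tau w}=\real\PI{T_\tau^*T_\tau v}{w}=0$, and the stated equality follows. The computation is otherwise mechanical; the only delicate point is this invariance of $\mc{N}_\pm$ under $T_\tau^*T_\tau$, which is exactly where the spectral structure of $G_\pm$ is used and which forces the cross term to die against $\mc{D}_\pm$.
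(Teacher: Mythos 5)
Your proof is correct and complete, but there is nothing in the paper to compare it against line by line: the authors do not prove this lemma at all, they only remark that its proof ``follows the lines of'' that of \cite[Lemma 4.5]{GZMMP}, so your argument serves as a self-contained substitute. It rests on exactly the two facts that any proof must exploit, and you establish both cleanly: first, that $\N_\pm\subseteq\HH_\pm$, indeed $\N_\pm=N(I_\pm-\kappa G_\pm)$ is the $\tfrac1\kappa$-eigenspace of $G_\pm$, because $\HH_+$, $\HH_-$ and $N(G)$ reduce $I\mp\kappa G$ and its restrictions to the two complementary blocks are positive definite; second, that the cross term in $\|T_\tau(v+w)\|^2$ vanishes, because $T_\tau^*T_\tau=G_\pm(I_\pm\pm\tau G_\pm)^{-2}$ is a function of $G_\pm$ and therefore leaves $\N_\pm$ invariant, so $\PI{T_\tau v}{T_\tau w}=\PI{T_\tau^*T_\tau v}{w}=0$ since $w\in\mc{D}_\pm=\HH_\pm\ominus\N_\pm$. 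The scalar action $T_\tau v=\tfrac{\sqrt{\kappa}}{\kappa\pm\tau}\,v$ then yields the coefficient $\tfrac{\kappa}{(\kappa\pm\tau)^2}$, which is the claimed identity. By contrast, the proof in the cited reference (as the internal label ``primera serie'' suggests) is organized around a series expansion of the resolvent $(I_\pm\pm\tau G_\pm)^{-1}$, which converges because $\|G_\pm\|\leq 1/\kappa$ and $|\tau|<\kappa$, with the same eigenspace/orthogonality facts used to split the series along $u=v+w$; your functional-calculus formulation reaches the identity without any convergence bookkeeping and is, if anything, more economical. One small point of care that you handled correctly: the invertibility of $I_\pm\pm\tau G_\pm$ for $\tau\in(-\kappa,\kappa)$ follows from the paper's statement, around \eqref{eq:pencil_alternativo}, that $I+(\la-\rho)G$ is positive semidefinite on $[\la_-,\la_+]$ and invertible on $(\la_-,\la_+)$, together with the fact that $\HH_\pm$ are reducing subspaces.
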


\begin{lema*}\label{lema:en_el_rango} 
Given $u\in\HH_\pm$, if $\lim_{\tau\to\kappa}\|G_\pm^{1/2}(I_\pm-\tau G_\pm)^{-1}u\|<+\infty$,
then $u\in R(I_\pm-\kappa G_\pm)$.
\end{lema*}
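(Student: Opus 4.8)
The plan is to pass to the spectral representation of $G_\pm$ and reduce the claim to a scalar integral estimate localized at the top of the spectrum.

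First I would record the relevant spectral data. Since $G_\pm\in\mc{L}(\HH_\pm)^+$ is injective and $I_\geq(I,G)=[-\kappa,\kappa]$ is symmetric, the requirement that $I_\pm-\tau G_\pm\geq 0$ hold exactly for $\tau\in[-\kappa,\kappa]$ forces $\|G_\pm\|=\tfrac1\kappa$; hence $\sigma(G_\pm)\subseteq(0,\tfrac1\kappa]$ with $\tfrac1\kappa=\max\sigma(G_\pm)$. In particular $I_\pm-\tau G_\pm$ is positive definite for every $\tau\in(-\kappa,\kappa)$ (so the inverses in the statement are genuinely bounded), whereas $I_\pm-\kappa G_\pm\geq0$ degenerates precisely at the spectral point $t=\tfrac1\kappa$. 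Fix $u\in\HH_\pm$, let $E$ be the spectral measure of $G_\pm$, and write $d\mu_u(t):=d\PI{E(t)u}{u}$, a finite measure supported in $(0,\tfrac1\kappa]$ with total mass $\|u\|^2$. Both signs $\pm$ are handled identically, since both involve the operator $I_\pm-\tau G_\pm$.

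By the functional calculus,
\[
\|G_\pm^{1/2}(I_\pm-\tau G_\pm)^{-1}u\|^2=\int_{(0,1/\kappa]}\frac{t}{(1-\tau t)^2}\,d\mu_u(t),\qquad\tau\in(-\kappa,\kappa).
\]
As $\tau\nearrow\kappa$ the integrand increases pointwise to $\tfrac{t}{(1-\kappa t)^2}$ (for $\tau\geq0$ its $\tau$-derivative $\tfrac{2t^2}{(1-\tau t)^3}$ is positive, as $\tau t<1$ throughout the support), so the Monotone Convergence Theorem converts the hypothesis into
\[
\int_{(0,1/\kappa]}\frac{t}{(1-\kappa t)^2}\,d\mu_u(t)<+\infty.
\]
I would then upgrade this to $\int(1-\kappa t)^{-2}\,d\mu_u(t)<+\infty$ by splitting the domain at $t=\tfrac1{2\kappa}$: on $\{t\geq\tfrac1{2\kappa}\}$ one has $2\kappa t\geq1$, whence $(1-\kappa t)^{-2}\leq2\kappa\,\tfrac{t}{(1-\kappa t)^2}$ and this part is dominated by the finite integral just obtained; on $\{t<\tfrac1{2\kappa}\}$ the denominator satisfies $1-\kappa t>\tfrac12$, so $(1-\kappa t)^{-2}<4$ and this part is bounded by $4\|u\|^2$.

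Finally, $\int(1-\kappa t)^{-2}\,d\mu_u(t)<+\infty$ is exactly the spectral criterion for $u$ to lie in the range of the nonnegative operator $I_\pm-\kappa G_\pm$: finiteness of the integrand (which equals $+\infty$ at $t=\tfrac1\kappa$) forces $\PI{E(\{1/\kappa\})u}{u}=0$, and the vector $v:=h(G_\pm)u$ defined by functional calculus with $h(t):=(1-\kappa t)^{-1}$ and $h(\tfrac1\kappa):=0$ then satisfies $\|v\|^2=\int(1-\kappa t)^{-2}\,d\mu_u(t)<+\infty$ and $(I_\pm-\kappa G_\pm)v=u$. Thus $u\in R(I_\pm-\kappa G_\pm)$. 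The single non-formal point, and hence the main obstacle, is the upgrade from finiteness of $\int\tfrac{t}{(1-\kappa t)^2}\,d\mu_u$ to that of $\int(1-\kappa t)^{-2}\,d\mu_u$; this succeeds only because the singularity sits at the top $t=\tfrac1\kappa$ of the spectrum, so the weight $t$ is bounded away from $0$ near it and cannot degrade the estimate. Everything else is a routine application of the spectral theorem.
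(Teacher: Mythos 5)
Your proof is correct and complete as written. One thing to note for the comparison: the paper itself contains no proof of this lemma; it only remarks that the proof ``follows the lines of'' that of \cite[Lemma 4.6]{GZMMP}, so the only possible comparison is with the style of argument used there and elsewhere in the paper, which is exactly the functional-calculus style you adopt (compare the monotonicity claims for $g_\pm$ in the proof of the range condition at $\la_\pm$). Your three steps --- (1) the spectral representation $\|G_\pm^{1/2}(I_\pm-\tau G_\pm)^{-1}u\|^2=\int t(1-\tau t)^{-2}\,d\mu_u(t)$ together with monotone convergence as $\tau\nearrow\kappa$; (2) the upgrade from finiteness of $\int t(1-\kappa t)^{-2}\,d\mu_u$ to finiteness of $\int (1-\kappa t)^{-2}\,d\mu_u$ by splitting at $t=1/(2\kappa)$, which is indeed the only delicate point and works precisely because the degeneracy sits at the top of the spectrum; and (3) the standard spectral criterion $u\in R(I_\pm-\kappa G_\pm)$ if and only if $\int(1-\kappa t)^{-2}\,d\mu_u<\infty$, implemented via the cutoff function $h$ --- are all sound. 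Two minor remarks. First, you do not actually need $\|G_\pm\|=1/\kappa$: the argument only uses $\sigma(G_\pm)\subseteq[0,1/\kappa]$, i.e. $I_\pm-\kappa G_\pm\geq0$, which is immediate from $\pm\kappa\in I_\geq(I,G)$; whether the top of the spectrum is attained is irrelevant, and if $\|G_\pm\|<1/\kappa$ the lemma is trivial because $I_\pm-\kappa G_\pm$ is then invertible. Second, the integrand $t(1-\tau t)^{-2}$ is increasing in $\tau$ on all of $(-\kappa,\kappa)$, not just for $\tau\geq0$, since $1-\tau t>0$ throughout the support; this shows in passing that the limit in the hypothesis always exists in $[0,+\infty]$, so the hypothesis is purely one of finiteness.
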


\begin{customprop}{4.9} For every $u_0\in\HH$ and every $\nu\in\RR$ there exists $(\gamma,y)\in[-\kappa,\kappa]\x\HH$
satisfying
\[
\begin{cases}
\,(I+\gamma G)y=u_0,\\
\,\PI{Gy}{y}=\nu,
\end{cases}
\]
if and only if $\mc{N}_-\neq\set{0}$ and $\mc{N}_+\neq\set{0}$.
\end{customprop}

\begin{proof} The necessity follows by \cite[Lemma 4.7]{GZMMP}. Conversely, assume that $\mc{N}_-\neq\set{0}$ and $\mc{N}_+\neq\set{0}$.
Consider the decomposition $u_0=u_0^++u_0^-+u_0^0$ with $u_0^\pm\in\HH_\pm$ and $u_0^0\in N(G)$, and the real valued functions $f_+$ and $f_-$ defined by
\begin{equation}\label{eq:funcion_+}
f_+(\tau)=\|G_+^{1/2}(I_++\tau G_+)^{-1}u_0^+\|^2,\quad \tau\in(-\kappa,\kappa]
\end{equation}
and
\begin{equation}\label{eq:funcion_-}
f_-(\tau)=\|G_-^{1/2}(I_--\tau G_-)^{-1}u_0^-\|^2,\quad \tau\in[-\kappa,\kappa),
\end{equation}
respectively. If there exists $\tau_0\in(-\kappa,\kappa)$ such that $f_+(\tau_0)=f_-(\tau_0)+\nu$, then setting $\gamma=\tau_0$ and
\[
y=(I_++\gamma G_+)^{-1}u_0^++(I_--\gamma G_-)^{-1}u_0^-+u_0^0
\]
implies that $\PI{Gy}{y}=\nu$ and $(I+\gamma G)y=u_0$.

Now assume that $f_+(\tau)>f_-(\tau)+\nu$ for every $\tau\in(-\kappa,\kappa)$. Then,
\[
\lim_{\tau\to\kappa}\|G_-^{1/2}(I_--\tau G_-)^{-1}u_0^-\|^2=\lim_{\tau\to\kappa} f_-(\tau)\leq f_+(\kappa)-\nu.
\]
Thus 
$u_0^-\in R(I_--\kappa G_-)$, and consequently $u_0\in R(I+\kappa G)$.
Since $\N_-\neq\set{0}$, we can choose $z\in \N_-$ with $\|z\|=1$.
Considering that $G_-z=\tfrac{1}{\kappa}z$ and $(I_--\kappa G_-)^\dag u_0^-\bot z$, and setting
\[
y=(I_++\kappa G_+)^{-1}u_0^++(I_--\kappa G_-)^\dag u_0^-+u_0^0+\alpha_-z,
\]
with
\begin{equation*}
\alpha_-:=\bigg(\kappa\Big(\|G_+^{1/2}(I_++\kappa G_+)^{-1}u_0^+\|^2-\|G_-^{1/2}(I_--\kappa G_-)^\dag u_0^-\|^2-\nu\Big)\bigg)^{1/2},
\end{equation*}
we have that $\PI{Gy}{y}=\nu$ and $(I+\kappa G)y=u_0$.

A similar argument holds if we assume that $f_+(\tau)<f_-(\tau)+\nu$ for every $\tau\in(-\kappa,\kappa)$.
\end{proof}

\end{document}